\numberwithin{equation}{section}
\crefname{section}{\S}{\S}
\NewDocumentCommand\Crefnameitem { m m m O{\textup} O{(\roman*)}} {%
  \Crefname{#1enumi}{#2}{#3} 
  \AtBeginEnvironment{#1}{%
    \crefalias{enumi}{#1enumi}%
    \setlist[enumerate,1]{
        label={#4{#5}.},
        ref={#5}
    }%
  }  
}
\NewDocumentMathCommand\odv{m}{\frac{\d}{\d{#1}}}
\NewDocumentMathCommand\pdv{m}{\frac{\partial}{\partial{#1}}}
\declaremathcommand\iu{\mathbb{i}}
\declaremathcommand\L{\mathcal{L}}
\declaremathcommand\U{\mathscr{U}}
\declaremathcommand\H{\mathsf{H}}
\newmathcommand\vac{\mathbb{1}}
\def\Res{{\rm Res}}
\def\wt{{\rm wt}}
\def\de{\delta}
\def\dim{{\rm dim}}
\def\End{{\rm End}}
\def\Aut{{\rm Aut}}
\def\reg{{\rm reg}}
\def\lf{\lfloor}
\def\rf{\rfloor}
\def\of{\overline}
\def \<{\left\langle}
\def \>{\right\rangle}
\def \dast{\underline{\ast}}
\def \uast{\overline{\ast}}
\newtheorem{theorem}{Theorem}[section]
\newtheorem{proposition}[theorem]{Proposition}
\newtheorem{corollary}[theorem]{Corollary}
\newtheorem{lemma}[theorem]{Lemma}
\newtheorem{definition}[theorem]{Definition}
\newtheorem{remark}[theorem]{Remark}
\begin{document}
\begin{frontmatter}



\title{Twisted regular representations and bimodules in vertex operator algebra theory}


\author{Yiyi Zhu}

\affiliation{organization={Department of mathematics, South China University of Technology},
            addressline={381 Wushan Road}, 
            city={Guangzhou},
            postcode={510641}, 
            state={Guangdong},
            country={China}}
\begin{abstract}
In this paper, we use the twisted regular representation theory of vertex operator algebras to construct bimodules over twisted Zhu algebras, extending Haisheng Li's work in untwisted scenarios. Moreover, a conjecture of Dong and Jiang on bimodule theory is confirmed.
\end{abstract}



\begin{keyword}
vertex operator algebra \sep twisted module \sep bimodule \sep twisted regular representation

\MSC[2020] 17B69

\end{keyword}

\end{frontmatter}


\section{Introduction}
The (twisted) representation theory of a vertex operator algebra has been studied extensively in terms of the representation theory of various associative algebras associated with the vertex operator algebra (see \cite{KW94, Z96, DLM98a, DLM98b, DLM98c, M04}). Compared to the universal enveloping algebras associated with vertex operator algebras \cite{FZ92}, these associative algebras are smaller \cite{He17, Han20, Han22, HAN25} and easier to deal with, and they have been proved to be very useful in the study of representations of vertex operator algebras. Let $V$ be a vertex operator algebra, $g$ be an automorphism of $V$ with finite order $T$, a series of associative algebras $A_{g, n}(V)$ were defined for $n\in \frac{1}{T}$ in \cite{DLM98c}. These associative algebras $A_{g, n}(V)$ carry rich information about the representation of $V$. For example, there is a bijection between the set of inequivalent irreducible $g$-twisted $V$-modules and the set of inequivalent irreducible $A_{g, n}(V)$-modules, which is very useful in classifying irreducible $g$-twisted $V$-modules. Also, the semisimplicity of $A_{g, n}(V)$ for large enough $n$ implies the rationality of $V$.

The $A_{g, n}(V)$-$A_{g, m}(V)$-bimodule $A_{g, n, m}(V)$ theory for $m, n\in \frac{1}{T}\N$ developed by Dong and Jiang in \cite{DJ08a, DJ08b} greatly expanded the $A_{g, n}(V)$ theory with $A_{g, n, n}(V)=A_{g, n}(V)$ as associative algebras. $A_{g, n}(V)$ depicts all zero mode actions of $V$ on some subspace $\Omega_{n}(M)$ of an admissible $g$-twisted module $M$, while $A_{g, n, m}(V)$ depicts all $n-m$ mode actions. Because of this, Dong and Jiang were able to construct the generalized Verma admissible $g$-twisted $V$-module $\overline{M}(U)$ generated by an $A_{g, m}(V)$-module $U$ using these bimodules $A_{g, n, m}(V)$, which was first constructed in \cite{DLM98c} using a Lie algebraic approach. Roughly speaking, $\overline{M}(U)$ is the 'biggest' admissible $g$-twisted $V$-module generated by $U$. It plays an important role in the representation theory of $V$. From this point of view, these $A_{g, n, m}(V)$ help us understand the representation of $V$ better.

However, there is a setback of the $A_{g, n, m}(V)$ theory. It is almost impossible to compute these bimodules. Those various associative algebras mentioned above are all quotients of $V$, and there are explicit descriptions of the elements modulo out. For example, the Zhu algebra $A(V)=V/O(V)$ as vector spaces, where $O(V)$ is spanned by 
\begin{equation*}
    a\circ b=\Res_{x}x^{-2}Y\left((1+x)^{L_{(0)}}a, b\right)
\end{equation*}
for all $a, b\in V$. The Zhu algebras of almost all familiar vertex operator algebras are well known \cite{FZ92}. But for $A_{g, n, m}(V)$, they are more complicated than Zhu algebras, the subspace $O_{g, n, m}(V)$ quotient out by Dong and Jiang in \cite{DJ08a, DJ08b} consists of three parts,
\begin{equation*}
    O_{g, n, m}(V)=O'_{g, n, m}(V)+O''_{g, n, m}(V)+O'''_{g, n, m}(V),
\end{equation*}
where $O'_{g, n, m}(V)$ is a generalization of $O(V)$ in Zhu algebra case, while $O''_{g, n, m}(V)$ and $O'''_{g, n, m}(V)$ are new. From the representation point of view, it is natural to quotient $O''_{g, n, m}(V)$ and $O''_{g, n, m}(V)$ out, but it is hard to write them down explicitly. Dong and Jiang conjectured that $O_{g, n, m}(V)=O'_{g, n, m}(V)$. A recent result in \cite{HAN25} shows that $O'''_{g, n, m}(V)$ is redundant. In this paper, we confirm Dong and Jiang's conjecture by making use of the regular representation theory.

The regular representation theory of vertex operator algebras was developed by Li in \cite{Li02}. Let $W$ be a weak $V$-module. For any complex number $z\in \C$, there is a weak $V\otimes V$-module structure on a subspace $\mathfrak{D}^{(z)}(W)$ inside the full dual $W^{*}$ of $W$. It turns out to be closely related to various Zhu algebras and their bimodules. The constructions of various Zhu algebras were recovered by Li using the regular presentation theory, see \cite{Li01, Li01a}. Moreover, he gave a more conceptual proof of Frenkel-Zhu's fusion rules theorem in \cite{Li01}. The most important inspiration of Li's regular representation theory to this paper is that he proved that $V/O'_{id, n, m}(V)$ has an $A_{id, n}(V)$-$A_{id, m}(V)$-bimodule structure and recovered many Dong and Jiang's results in \cite{DJ08a}, which almost confirmed Dong and Jiang's conjecture in untwisted scenario. 

This paper aims to generalize Li's work in \cite{Li22} to the twisted scenario using the twisted regular representation theory of vertex operator algebras developed in \cite{LS23} and confirm Dong and Jiang's conjecture in \cite{DJ08a, DJ08b}.

Let $g_1, g_2$ be two finite-order mutually-commuting automorphisms of $V$ such that $g_1^T=g_2^T=1$, $W$ be a weak $(g_1g_2)^{-1}$-twisted $V$-module, and $m, n\in \frac{1}{T}\N$. We follow Li's idea in \cite{Li22} to study two quotient spaces of $W$, $A^{\dag}_{g_2^{-1}, g_1, n, m}(W)$ and $A^{\diamond}_{g_2^{-1}, g_1, n, m}(W)$. Since $g_1$ and $g_2$ commute with each other, $V$ has the following eigenspace decomposition:
\begin{equation*}
    V=\oplus_{0\leq j_1, j_2<T}V^{(j_1, j_2)},
\end{equation*}
where 
\begin{equation*}
    V^{(j_1, j_2)}=\{v\in V\mid g_kv=e^{\frac{2\pi \iu j_k}{T}}v, k=1, 2\}.
\end{equation*}
For homogeneous $a\in V^{(j_1, j_2)}$, and $v\in W$, define
\begin{equation*}
    a\circ_{g_2^{-1}, g_1, m}^{n} v=\Res_{x}\frac{(1+x)^{\lambda(m, j_1)}}{x^{\lambda(m, j_1)+\lambda(n, j_2)+2}}Y_{W}\left((1+x)^{L_{(0)}}a, x\right)v,
\end{equation*}
where $\lambda(x, r)$ is defined for any number $x\in \frac{1}{T}\N$ and integer $0\leq r<T$ by \labelcref{eq:lambda-number}.
Let $O^{\dag}_{g_2^{-1}, g_1, n, m}(W)$ be the space spanned by all $a\circ_{g_2^{-1}, g_1, m}^{n} v$ for $a\in V$ and $v\in W$, and let 
\begin{equation*}
    O'_{g_2^{-1}, g_1, n, m}(W)=O^{\dag}_{g_2^{-1}, g_1, n, m}(W)+(L_{(-1)}+L_{(0)}+m-n)(W).
\end{equation*}
Set 
\begin{equation*}
    A^{\dag}_{g_2^{-1}, g_1, n, m}(W)=W/O^{\dag}_{g_2^{-1}, g_1, n, m}(W)
\end{equation*}
and 
\begin{equation*}
    A^{\diamond}_{g_2^{-1}, g_1, n, m}(W)=W/O^{'}_{g_2^{-1}, g_1, n, m}(W).
\end{equation*} 

We show that both $A^{\dag}_{g_2^{-1}, g_1, n, m}(W)$ and $A^{\diamond}_{g_2^{-1}, g_1, n, m}(W)$ are $A_{g_2^{-1}, n}(V)$-$A_{g_1, m}(V)$-bimodules. For an $A_{g_1, m}(V)$-module $U$, we show 
\begin{equation*}
    A^{\diamond}_{g_2^{-1}, g_1, \square, m}(W)\otimes_{A_{g_1, m}(V)}U=\oplus_{n\in \frac{1}{T}\N} A^{\diamond}_{g_2^{-1}, g_1, n, m}(W)\otimes_{A_{g_1, m}(V)}U
\end{equation*}
is an admissible $g_2^{-1}$-twisted module.

In the special case $W=V, g_2^{-1}=g_1$, we have $O'_{g_1, g_1, n, m}(V)=O'_{g_1, n, m}(V)$ by definition. Hence $A_{g_1, n, m}(V)$ defined by Dong and Jiang is a quotient of both $A^{\dag}_{g_1, g_1, n, m}(V)$ and $A^{\diamond}_{g_1, g_1, n, m}(V)$. The left actions of these two bimodules are the same as the left action defined by Dong and Jiang in \cite{DJ08b} by definition. Though it looks different, we prove that the right action on $A^{\diamond}_{g_1, g_1, n, m}(V)$ is also the same as the one in \cite{DJ08b}. Most importantly, with the help of some results of Dong and Jiang, we prove that $O''_{g_1, n, m}(V)+O'''_{g_1, n, m}(V)\subseteq O'_{g_1, n, m}(V)$, which confirms the Dong and Jiang's conjecture.

This paper is organized as follows: In Section 2, we review some basics of the twisted representation theory of vertex operator algebras; In Section 3, we study the space $A^{\dag}_{g_2^{-1}, g_1, n, m}(W)$ and show that it has an $A_{g_2^{-1}, n}(V)$-$A_{g_1, m}(V)$-bimodule structure; In Section 4, we study the space $A^{\diamond}_{g_2^{-1}, g_1, n, m}(W)$ and show that it also has an $A_{g_2^{-1}, n}(V)$-$A_{g_1, m}(V)$-bimodule structure and recover some results of Dong and Jiang; In Section 5, we confirm Dong and Jiang's conjecture in \cite{DJ08a, DJ08b}.

\section{Preliminaries}
We refer to \cite{B86, FHL93, LL04} for the vertex operator algebra theory basics. Throughout this paper, for $\lambda\in \C$,  we fix
\begin{equation}
    (-1)^{\lambda}=e^{\pi \lambda \iu}.
\end{equation}

Let $(V, Y, \vac, \omega)$ be a vertex operator algebra, $g$ a finite-order automorphism of $V$, and $T$ a fixed positive integer such that $g^{T} = 1$. Then \[V=\oplus_{r=0}^{T-1} V^r, \text{\quad where } V^r=\{a\in V\mid ga=e^{\frac{2\pi \iu r}{T}}a\}.\]
\begin{definition}\cite{DLM98a}\label{def:g-twisted-module}
    A weak $g$-twisted $V$-module $M$ is a vector space equipped with a linear map $Y_M(-, x)$ from $V$ to $(\End$M$)[[x^{-\frac{1}{T}}, x^{\frac{1}{T}}]]$ such that: 
    \begin{itemize}
        \item For $a\in V^r, v\in M$, $Y_M(a, x)v=\sum_{n\in \frac{1}{T}\Z}a_{(n)}x^{-n-1} \in x^{-\frac{r}{T}}M((x))$;
        \item $Y_M(\vac, x)=id_M$;
        \item \textbf{Jacobi identity}: For $a\in V^r, b \in V$,
        \[x_0^{-1}\delta(\frac{x_1-x_2}{x_0})Y_M(a, x_1)Y_M(b, x_2)-x_0^{-1}\delta(\frac{-x_2+x_1}{x_0})Y_M(b, x_2)Y_M(a, x_1)\]
        \[=x_1^{-1}\delta(\frac{x_2+x_0}{x_1})(\frac{x_2+x_0}{x_1})^{\frac{r}{T}}Y_{M}(Y(a, x_0)b, x_2).\]
    \end{itemize}
\end{definition}
\begin{definition}\label{def:admissible-module}
    A weak $g$-twisted $V$-module $M$ is called admissible if there exists a $\frac{1}{T}\mathbb{N}$-grading $M = \oplus_{n\in \frac{1}{T}\mathbb{N}}M(n)$ such that for homogeneous $a\in V$, $a_{(n)}M(m)\subseteq M(m+\wt a-n-1)$. 
\end{definition} 
\begin{definition}\label{def:ordinary-module}
    A $g$-twisted $V$-module $M$ is a weak $g$-twisted $V$-module such that the following hold:
    \begin{itemize}
        \item $M=\oplus_{\lambda\in \C} M_{\lambda}$ with $\dim M_{\lambda}<\infty$ for any $\lambda\in \C$, and for any $\lambda_{0}\in \C$,  $M_{\lambda_{0}+\frac{n}{T}}=0$ for all sufficiently negative integers $n$;
        \item $M_{\lambda}=\{v\in M \mid L_{(0)}v=\lambda v\}$, where $L_{(0)}$ is the component operator in $Y_{M}(\omega, x)=\sum_{n\in \Z}L_{(n)}x^{-n-2}$.
    \end{itemize}
\end{definition}
A $g$-twisted $V$-module is also called an ordinary module, and an ordinary $g$-twisted module is admissible.

Let $M=\oplus_{\lambda\in\C}M_{\lambda}$ be a $g$-twisted module. The contragredient module $M'$ is defined as follows:
\[
M'=\oplus_{\lambda\in\C}M_{\lambda}^*,
\]
the vertex operator $Y_M'(a, x)$ is defined by 
\begin{equation}\label{eq:contragredient-module}
\< Y_M'(a, x)f, v\> = \< f, Y_{M}(e^{xL_{(1)}}(-x^{-2})^{L_{(0)}}a, x^{-1})v\>
\end{equation}
for $a\in V$, $f\in M'$ and $v\in M$. One can prove (cf. \cite{FHL93, X95}) the following:
\begin{theorem}\label{thm:contragredient}
    $(M', Y_M')$ is a $g^{-1}$-twisted module and $(M'', Y_M'')=(M, Y_M)$. $M$ is irreducible if and only if $M'$ is irreducible. 
\end{theorem}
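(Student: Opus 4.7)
The plan is to verify the defining axioms of a $g^{-1}$-twisted module (in the sense of \cref{def:g-twisted-module}) for $(M',Y_M')$ one by one, then to identify the double contragredient with $(M,Y_M)$ canonically, and finally to transfer irreducibility via an annihilator correspondence. All three rely on the ``adjoint'' operator $\psi_x(a):=e^{xL_{(1)}}(-x^{-2})^{L_{(0)}}a$ appearing in \labelcref{eq:contragredient-module}. Fix $a\in V^r$; since $g^{-1}a=e^{-2\pi\iu r/T}a=e^{2\pi\iu(T-r)/T}a$, the target twisting exponent for $Y_M'(a,x)$ should be $(T-r)/T$ modulo $1$.

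First I would dispose of the elementary axioms. The vacuum property $Y_M'(\vac,x)=\mathrm{id}_{M'}$ is immediate from $\psi_x(\vac)=\vac$ and $Y_M(\vac,x^{-1})=\mathrm{id}_M$. For truncation together with the correct fractional twist, pair $Y_M'(a,x)f$ with a homogeneous $v\in M_\lambda$: the factor $(-x^{-2})^{L_{(0)}}$ contributes powers in $-2\wt a+\Z$, while $Y_M(b,x^{-1})v\in x^{r/T}M[[x,x^{-1}]]$ for $b\in V^r$; combined with the local finiteness of $L_{(0)}$ on $M$ and the finite-dimensionality of each $M_\lambda$, this gives $Y_M'(a,x)f\in x^{-(T-r)/T}M'((x))$, matching the $g^{-1}$-twisting requirement.

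The main obstacle is the Jacobi identity for $Y_M'$. The route I would take is to pair the desired identity against an arbitrary $v\in M$ and derive it from the Jacobi identity of $Y_M$ under the substitutions $x_1\mapsto x_1^{-1}$, $x_2\mapsto x_2^{-1}$, $x_0\mapsto -x_0 x_1^{-1}x_2^{-1}$. Two computations then have to mesh. On the operator side, the conjugation identities for $L_{(1)}$ and $L_{(0)}$ yield $\psi_{x_2}(Y(a,x_0)b)$ in terms of $Y_M(\psi_{x_1}a,x_1^{-1})\psi_{x_2}(b)$ after the substitution. On the branch side, the factor $((x_2+x_0)/x_1)^{r/T}$ on the right-hand side of the Jacobi identity for $Y_M$ transforms, under the substitution and the convention $(-1)^\lambda=e^{\pi\lambda\iu}$, into $((-x_2-x_0)/(-x_1))^{(T-r)/T}$, which is precisely the twisting factor demanded by the Jacobi identity for a $g^{-1}$-twisted module. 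Getting these branches to line up cleanly is the delicate step, and it is the point at which $g$ genuinely flips into $g^{-1}$.

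For $M''=M$, I would identify $M''$ with $M$ via the natural graded double-dual isomorphism (available because each $M_\lambda$ is finite-dimensional) and reduce $Y_M''=Y_M$ to the identity $\psi_{x^{-1}}\circ\psi_x=\mathrm{id}_V$; this comes out of $(-x^{-2})^{L_{(0)}}(-x^2)^{L_{(0)}}=1$ together with the commutation of $e^{xL_{(1)}}$ past $(-x^{-2})^{L_{(0)}}$ (both facts need the branch convention again). Lastly, for irreducibility, the assignment $N\mapsto N^{\perp}:=\{v\in M\mid\langle f,v\rangle=0\text{ for all }f\in N\}$ sends graded $V$-submodules of $M'$ to graded $V$-submodules of $M$; invariance of $N^\perp$ under $Y_M$ is exactly the adjointness built into \labelcref{eq:contragredient-module}. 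This map and its inverse (well-defined by $M''=M$) give an order-reversing bijection of submodule lattices, from which the irreducibility equivalence follows at once.
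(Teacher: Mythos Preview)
Your proposal is correct and follows the standard route from the references the paper cites; note that the paper does not give its own proof of this theorem but simply refers to \cite{FHL93, X95}. The one organizational difference worth flagging is that the paper singles out the \emph{opposite Jacobi identity} \labelcref{eq:opposite-jacobi} for $Y_M^{\circ}$ as the key intermediate statement (since it is reused later, e.g.\ in \cref{prop:left-right-module-correspondence}), whereas you pair the desired Jacobi identity for $Y_M'$ directly against $v\in M$ and manipulate; these are the same computation read through the pairing, so your substitution $x_i\mapsto x_i^{-1}$, $x_0\mapsto -x_0x_1^{-1}x_2^{-1}$ is exactly what produces \labelcref{eq:opposite-jacobi}. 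One small cosmetic point: your twisting exponent $-(T-r)/T$ should be read modulo $1$ (so that $r=0$ gives exponent $0$, not $-1$), but since $x^{-1}M'((x))=M'((x))$ this does not affect the argument.
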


Let $M$ be a weak $g$-twisted $V$-module. For $a\in V$, $v\in M$, define 
\begin{equation}\label{eq:Y-circ}
    Y_{M}^{\circ}(a, x)v=Y_{M}(e^{xL_{(1)}}(-x^{-2})^{L_{(0)}}a, x^{-1})v\in M((x^{-\frac{1}{T}})).
\end{equation}
In the proof of \cref{thm:contragredient} (cf. \cite{FHL93, X95}), one actually proves the following opposite Jacobi identity:
\begin{equation}\label{eq:opposite-jacobi}
    \begin{aligned}
        \MoveEqLeft
        x_0^{-1}\delta(\frac{x_1-x_2}{x_0})Y_{M}^{\circ}(b, x_2)Y_{M}^{\circ}(a, x_1)v-x_0^{-1}\delta(\frac{-x_2+x_1}{x_0})Y_{M}^{\circ}(a, x_1)Y_{M}^{\circ}(b, x_2)v\\
        &=x_1^{-1}\delta(\frac{x_2+x_0}{x_1})(\frac{x_2+x_0}{x_1})^{-\frac{r}{T}}Y_{M}^{\circ}(Y(a, x_0)b, x_2)v,
    \end{aligned}
\end{equation}
where $a\in V^{r}$, $b\in V$ and $v\in M$.

The following definition is a generalization of the so-called right weak $V$-module defined in \cite{Li02}:
\begin{definition}
    A right weak $g$-twisted $V$-module is a vector space $M$ equipped with a linear map $Y_M(-, x)$ from $V$ to $(\End M)[[x^{-\frac{1}{T}}, x^{\frac{1}{T}}]]$ such that:
    \begin{itemize}
        \item For $a\in V^r, v\in M$, $Y_M(a, x)v=\sum_{n\in\frac{1}{T}\Z}a_{(n)}vx^{-n-1}\in x^{\frac{r}{T}}M((x^{-1}))$;
        \item $Y_M(\vac, x)=id_M$;
        \item \textbf{Opposite Jacobi identity}: For $a\in V^r, b \in V$,
        \[x_0^{-1}\delta(\frac{x_1-x_2}{x_0})Y_M(b, x_2)Y_M(a, x_1)-x_0^{-1}\delta(\frac{-x_2+x_1}{x_0})Y_M(a, x_1)Y_M(b, x_2)\]
        \[=x_1^{-1}\delta(\frac{x_2+x_0}{x_1})(\frac{x_2+x_0}{x_1})^{-\frac{r}{T}}Y_{M}(Y(a, x_0)b, x_2).\]
    \end{itemize}
\end{definition}
A right $g$-twisted $V$-module is defined to be a right weak $g$-twisted module $W$ carrying a $\C$-grading given by the $L_{(0)}$-spectrum, and satisfying the two restrictions in \cref{def:ordinary-module}. Similar to the untwisted case dealt with in \cite{Li02}, we remark that for a left module, $L_{(n)}$ acts locally nilpotently for $n\geq 1$, while for a right module, it is $L_{(-n)}$ that acts locally nilpotently for $n\geq 1$.

For a right weak $g$-twisted module $(W, Y_{W})$, we have the following $L_{(-1)}$-derivative property:  
\begin{equation}
    [L_{(-1)}, Y_{W}(a, x)]=-\odv{x}Y_{W}(a, x) \quad \text{for\ any\ } a\in V.
\end{equation}
Hence, the $L_{(-1)}$-conjugation formula for a right module $W$ is 
\begin{equation}\label{eq: L(-1)-conjugation-right}
    Y_{W}(a, x+z_0)=e^{-z_0L_{(-1)}}Y_{W}(a, x)e^{z_0L_{(-1)}}.
\end{equation}
Another identity we will be using later is the $L_{(1)}$-conjugation formula for twisted modules. Let $(M, Y_{M})$ be a weak $g$-twisted module, the proof of \cite[Lemma 5.2.3]{FHL93} also works for twisted module $M$ if $L_{(1)}$ is locally nilpotent on $M$. Thus, for any $a\in V$ and $z_0\in \C$, we have
\begin{equation}
    e^{-z_0L_{(1)}}Y_{M}(a, x)e^{z_0L_{(1)}}=Y_{M}\left(e^{-z_0(1+z_0x)L_{(1)}}(1+z_0x)^{-2L_{(0)}}a, \frac{x}{1+z_0x}\right).
\end{equation}
We also need the following identity (cf. \cite{FHL93}): 
\begin{equation}\label{eq:conjugation-formula-*}
    x_{1}^{-L_{(0)}}e^{xL_{(1)}}x_{1}^{L_{(0)}}a=e^{xx_{1}L_{(1)}}a \quad \text{for}\ a\in V.
\end{equation}

Using the proof of \cite[Theorem 5.2.1, Proposition 5.3.1]{FHL93}, one can prove the following result:
\begin{proposition}\label{prop:left-right-module-correspondence}
    Let $U$ be a vector space and $Y_{U}(-, x)$ is a linear map from $V$ to $U[[x^{-\frac{1}{T}}, x^{\frac{1}{T}}]]$. Then $(U, Y_{U})$ is a left weak $g$-twisted $V$-module if and only if $(U, Y_{U}^{\circ})$, defined by \labelcref{eq:Y-circ}, is a right weak $g$-twisted $V$-module.
\end{proposition}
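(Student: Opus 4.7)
The plan is to follow the strategy of the proofs of Theorem 5.2.1 and Proposition 5.3.1 in \cite{FHL93}, with adjustments to accommodate the $g$-twist. My first observation is that the assignment $Y_U \mapsto Y_U^{\circ}$ is an involution on the space of linear maps $V \to (\End U)[[x^{-\frac{1}{T}}, x^{\frac{1}{T}}]]$. Iterating the definition produces
\begin{equation*}
(Y_U^{\circ})^{\circ}(a, x) = Y_U\bigl(e^{x^{-1}L_{(1)}}(-x^{2})^{L_{(0)}}e^{xL_{(1)}}(-x^{-2})^{L_{(0)}}a, x\bigr),
\end{equation*}
and since $L_{(0)}$ has integer eigenvalues on $V$ one has $(-x^{-2})^{-L_{(0)}} = (-x^2)^{L_{(0)}}$ as operators on $V$, so the conjugation identity \labelcref{eq:conjugation-formula-*} collapses the inner composition to $e^{-x^{-1}L_{(1)}}$, and the whole operator reduces to the identity. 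Hence it suffices to prove one direction.

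Assume $(U, Y_U)$ is a left weak $g$-twisted module. The vacuum axiom for $Y_U^{\circ}$ is immediate: since $L_{(0)}\vac = L_{(1)}\vac = 0$, we have $Y_U^{\circ}(\vac, x) = Y_U(\vac, x^{-1}) = \text{id}_U$. The truncation axiom also transfers cleanly: for $a \in V^r$, the expression $e^{xL_{(1)}}(-x^{-2})^{L_{(0)}}a$ is a Laurent polynomial in $x$ whose coefficients all lie in $V^r$ (because $\omega \in V^0$ forces $L_{(0)}, L_{(1)}$ to preserve the $g$-eigenspace decomposition), so the left-module truncation $Y_U(c, y)v \in y^{-r/T}U((y))$ for $c \in V^r$ becomes, after the substitution $y = x^{-1}$, exactly the condition $Y_U^{\circ}(a, x)v \in x^{r/T}U((x^{-1}))$.

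The substantive step is deriving the opposite Jacobi identity for $Y_U^{\circ}$. I would begin from the Jacobi identity for $Y_U$ applied to the shifted vectors $e^{x_1L_{(1)}}(-x_1^{-2})^{L_{(0)}}a$ and $e^{x_2L_{(1)}}(-x_2^{-2})^{L_{(0)}}b$ with $x_1, x_2$ replaced by $x_1^{-1}, x_2^{-1}$, and then perform the standard FHL change of variables $x_0 \mapsto -x_0/(x_1 x_2)$. The delta-function identity $x_0^{-1}\delta\!\left(\frac{x_1-x_2}{x_0}\right) = -(-x_0)^{-1}\delta\!\left(\frac{x_2-x_1}{-x_0}\right)$ interchanges the two terms on the left-hand side, producing the opposite operator ordering required for a right weak module, while the $L_{(0)}, L_{(1)}$-conjugation formulas reabsorb the shifts back into $Y_U^{\circ}$. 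On the right-hand side, the twist factor $\left(\frac{x_2+x_0}{x_1}\right)^{r/T}$ gets inverted to $\left(\frac{x_2+x_0}{x_1}\right)^{-r/T}$ under the substitution, which is precisely the sign flip distinguishing \labelcref{eq:opposite-jacobi} from the Jacobi identity of \cref{def:g-twisted-module}. The main technical obstacle is pure bookkeeping: tracking the fractional $r/T$-powers arising from the twist, coordinating them with the branch convention $(-1)^{\lambda} = e^{\pi\lambda\iu}$ fixed at the start of the section, and verifying that all phase factors produced by the $(-x_i^{\pm 2})^{L_{(0)}}$ conjugations cancel consistently. Because $\omega$ is $g$-invariant, no new twist anomaly appears beyond the single exponent flip $r/T \mapsto -r/T$, so the computation runs parallel to the untwisted case of FHL93.
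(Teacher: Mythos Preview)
Your proposal is correct and matches the paper's approach exactly: the paper does not give a detailed proof but simply cites \cite[Theorem 5.2.1, Proposition 5.3.1]{FHL93} and notes earlier that the opposite Jacobi identity \labelcref{eq:opposite-jacobi} is established in the course of proving \cref{thm:contragredient}. Your write-up is in fact more explicit than the paper's own, correctly identifying the involution reduction, the transfer of vacuum and truncation axioms via $g$-invariance of $\omega$, and the single exponent flip $r/T \mapsto -r/T$ under the FHL change of variables as the only new feature beyond the untwisted case.
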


Let $(W, Y_{W})$ be a right weak $g$-twisted module. Following \cite{Li02}, for any $a\in V$ and $z_0\in \C$, define 
\begin{equation}
    Y^{(z_0)}_{W}(a, x)=Y_{W}(a, x+z_0)=e^{z_0\odv{x}}Y_{W}(a, x)\in W((x^{-\frac{1}{T}})).
\end{equation}
The following proposition is obtained in \cite{LS23}. It is a twisted analog of \cite[Proposition 2.9]{Li02}.
\begin{proposition}\label{prop:right-module-translation}
    Let $(W, Y_{W})$ be a right weak $g$-twisted module and $z_0\in \C$. Then $(W, Y^{(z_0)}_{W})$ is also a right weak $g$-twisted module. Furthermore, if $L_{(-1)}$ acts locally nilpotently on $W$, then $e^{-z_0L_{(-1)}}$ is an isomorphism from $(W, Y_{W})$ to $(W, Y_{W}^{Z_0})$.
\end{proposition}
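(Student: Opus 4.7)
The plan is to verify the defining axioms of a right weak $g$-twisted module for $(W, Y^{(z_0)}_W)$ directly from those for $(W, Y_W)$, and then to deduce the isomorphism assertion from the $L_{(-1)}$-conjugation formula \labelcref{eq: L(-1)-conjugation-right}. First I would dispose of the vacuum and truncation conditions. The vacuum axiom is immediate since $Y^{(z_0)}_W(\vac, x) = Y_W(\vac, x+z_0) = \mathrm{id}_W$. For the truncation axiom: given $a \in V^r$, one has $Y_W(a, x) = \sum_{n \in \frac{r}{T}+\Z} a_{(n)} x^{-n-1} \in x^{r/T}W((x^{-1}))$, and formally substituting $x \mapsto x + z_0$ amounts to expanding each $(x+z_0)^{-n-1}$ as the binomial series $\sum_{k \geq 0}\binom{-n-1}{k} z_0^k x^{-n-1-k}$, which only introduces further negative powers of $x$; hence $Y^{(z_0)}_W(a, x) \in x^{r/T}W((x^{-1}))$.

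The key step is the opposite Jacobi identity for $Y^{(z_0)}_W$. I would obtain it from the opposite Jacobi identity for $Y_W$ by applying the formal differential operator $e^{z_0\partial_{x_1}}e^{z_0\partial_{x_2}}$, equivalently by the simultaneous substitution $x_1 \mapsto x_1 + z_0$, $x_2 \mapsto x_2 + z_0$ (with $x_0$ untouched). On the commutator side, $x_1-x_2$ is invariant under this substitution, so the delta factors $x_0^{-1}\delta(\frac{x_1-x_2}{x_0})$ and $x_0^{-1}\delta(\frac{-x_2+x_1}{x_0})$ pass through unchanged, while the products $Y_W(a, x_1)Y_W(b, x_2)$ and $Y_W(b, x_2)Y_W(a, x_1)$ transform into the corresponding products of $Y^{(z_0)}_W$. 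On the iterate side, $Y_W(Y(a, x_0)b, x_2)$ becomes $Y^{(z_0)}_W(Y(a, x_0)b, x_2)$, and the delta factor $x_1^{-1}\delta(\frac{x_2+x_0}{x_1})(\frac{x_2+x_0}{x_1})^{-r/T}$ becomes $(x_1+z_0)^{-1}\delta(\frac{(x_2+z_0)+x_0}{x_1+z_0})(\frac{(x_2+z_0)+x_0}{x_1+z_0})^{-r/T}$. The latter must be shown to collapse back to the original form: the delta function forces $x_1 + z_0 = (x_2 + z_0) + x_0$, i.e.\ $x_1 = x_2 + x_0$, and using the standard substitution property for the formal delta function along its support together with the branch convention $(-1)^{\lambda} = e^{\pi\lambda\iu}$ to interpret the fractional power, one obtains exactly the desired factor.

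For the second assertion, suppose $L_{(-1)}$ acts locally nilpotently on $W$, so that $e^{\pm z_0 L_{(-1)}}$ are well-defined mutually inverse endomorphisms of $W$. The conjugation formula \labelcref{eq: L(-1)-conjugation-right} gives
\begin{equation*}
Y_W(a, x+z_0) = e^{-z_0 L_{(-1)}} Y_W(a, x) e^{z_0 L_{(-1)}},
\end{equation*}
which is exactly the statement that $e^{-z_0 L_{(-1)}}\colon (W, Y_W) \to (W, Y^{(z_0)}_W)$ intertwines the two module structures. Combined with its two-sided inverse $e^{z_0 L_{(-1)}}$, this yields the desired isomorphism.

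The main obstacle I anticipate is the delta-function/fractional-power bookkeeping on the iterate side of the Jacobi identity: one must argue carefully that $(x_1+z_0)^{-1}\delta(\frac{x_2+z_0+x_0}{x_1+z_0})(\frac{x_2+z_0+x_0}{x_1+z_0})^{-r/T}$ genuinely reduces to $x_1^{-1}\delta(\frac{x_2+x_0}{x_1})(\frac{x_2+x_0}{x_1})^{-r/T}$, since the formal expansion convention for $(x+z_0)^{\alpha}$ at non-integer $\alpha$ must be reconciled with the chosen branch, and one needs a fractional-exponent analogue of the translation covariance of $x_1^{-1}\delta(\frac{x_2+x_0}{x_1})$ that is normally used in the untwisted case of \cite{Li02}.
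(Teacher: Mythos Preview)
The paper does not actually prove this proposition: it is stated with the attribution ``The following proposition is obtained in \cite{LS23}. It is a twisted analog of \cite[Proposition 2.9]{Li02},'' and no argument is given. Your direct verification of the axioms is therefore precisely what is called for, and it follows the template of the untwisted case in \cite{Li02}.

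Your plan is sound. One minor slip: for $a\in V^r$ the indexing set for $a_{(n)}$ in a right module is $n\in -\tfrac{r}{T}+\Z$, not $\tfrac{r}{T}+\Z$ (since $x^{-n-1}\in x^{r/T}\C[x,x^{-1}]$ forces $-n-1\in \tfrac{r}{T}+\Z$); this does not affect your truncation argument.

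Regarding the obstacle you flag on the iterate side: it dissolves once you observe that the twisted delta expression
\[
F(x_0,x_1,x_2)\;=\;x_1^{-1}\delta\!\Big(\frac{x_2+x_0}{x_1}\Big)\Big(\frac{x_2+x_0}{x_1}\Big)^{-r/T}
\;=\;\sum_{n\in -\frac{r}{T}+\Z} x_1^{-n-1}(x_2+x_0)^n
\]
is annihilated by $\partial_{x_1}+\partial_{x_2}$. Indeed, $\partial_{x_1}F=\sum_n(-n-1)x_1^{-n-2}(x_2+x_0)^n$ while $\partial_{x_2}F=\sum_n n\,x_1^{-n-1}(x_2+x_0)^{n-1}=\sum_n(n+1)x_1^{-n-2}(x_2+x_0)^n$ after reindexing, and these cancel. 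Hence $e^{z_0(\partial_{x_1}+\partial_{x_2})}F=F$, which is exactly the identity
\[
(x_1+z_0)^{-1}\delta\!\Big(\frac{(x_2+z_0)+x_0}{x_1+z_0}\Big)\Big(\frac{(x_2+z_0)+x_0}{x_1+z_0}\Big)^{-r/T}
=x_1^{-1}\delta\!\Big(\frac{x_2+x_0}{x_1}\Big)\Big(\frac{x_2+x_0}{x_1}\Big)^{-r/T}
\]
you need. No branch-choice subtlety arises here because the formal operator $e^{z_0\partial_x}$ acts on $x^\alpha$ by the binomial series in nonnegative powers of $z_0$, uniformly in $\alpha$. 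With this in hand, your Jacobi argument and the $L_{(-1)}$-conjugation step go through as written.
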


Since the $L_{(1)}$-conjugation formula holds for weak $g$-twisted modules, combining \cref{prop:left-right-module-correspondence} and \cref{prop:right-module-translation}, and using an argument similar to \cite[Remark 2.10]{Li02},  we have the following result, which will be used a lot later:
\begin{proposition}\label{prop:L(-1)-resulted-module}
    Let $(W, Y_{W})$ be a left weak $g$-twisted module and $z_0$ be a complex number. Define
    \begin{equation}
    Y_{W}^{[z_0]}(a, x)=Y_{W}\left(e^{-z_0(1+z_0x)L_{(1)}}(1+z_0x)^{-2L_{(0)}}a, \frac{x}{1+z_0x}\right)
\end{equation}
    for any $a\in V$.
    Then $(W, Y_{W}^{[z_0]})$ is also a left weak $g$-twisted module.
    Furthermore, if $L_{(1)}$ acts locally nilpotently on $W$, then $e^{-z_0L_{(1)}}$ is an isomorphism from $(W, Y_{W})$ to $(W, Y_{W}^{[z_0]})$.
\end{proposition}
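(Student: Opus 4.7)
The plan is to sandwich the new operator $Y_{W}^{[z_{0}]}$ between two applications of the left--right correspondence \cref{prop:left-right-module-correspondence}, with a translation by $z_{0}$ in the middle furnished by \cref{prop:right-module-translation}. Precisely, starting from $(W, Y_{W})$, pass to the right weak $g$-twisted module $(W, Y_{W}^{\circ})$; translate to the right weak $g$-twisted module $(W, (Y_{W}^{\circ})^{(z_{0})})$; then apply the correspondence once more in reverse to obtain a left weak $g$-twisted module, whose vertex operator I will provisionally call $\widetilde{Y}$. The proposition then reduces to identifying $\widetilde{Y} = Y_{W}^{[z_{0}]}$ and to tracking how the isomorphism provided by \cref{prop:right-module-translation} transports through this round trip.

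For the identification, I would expand
\begin{equation*}
    \widetilde{Y}(a, x)v = Y_{W}^{\circ}\bigl(e^{xL_{(1)}}(-x^{-2})^{L_{(0)}}a,\; x^{-1}+z_{0}\bigr)v
\end{equation*}
using \labelcref{eq:Y-circ} together with $(x^{-1}+z_{0})^{-1} = x/(1+z_{0}x)$ to rewrite the right side as $Y_{W}(F(x)\cdot a,\, x/(1+z_{0}x))v$ for an explicit $F(x)$ built from two $L_{(1)}$-exponentials and two $L_{(0)}$-powers. The conjugation formula \labelcref{eq:conjugation-formula-*} is then used to push $e^{xL_{(1)}}$ past the middle $L_{(0)}$-factor; the two $L_{(1)}$-exponentials collapse using the polynomial identity $(x^{-1}+z_{0}) - (1+z_{0}x)^{2}/x = -z_{0}(1+z_{0}x)$, and the two $L_{(0)}$-powers combine to $(1+z_{0}x)^{-2L_{(0)}}$ once one notes that $(-1)^{2L_{(0)}} = e^{2\pi\iu L_{(0)}}$ acts as the identity on $V$ under the convention $(-1)^{\lambda}=e^{\pi\iu\lambda}$, because $V$ is integrally $L_{(0)}$-graded. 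The outcome is precisely the stated formula for $Y_{W}^{[z_{0}]}$, and all weak module axioms for $(W, Y_{W}^{[z_{0}]})$ are then inherited automatically from those of $(W, (Y_{W}^{\circ})^{(z_{0})})$ via \cref{prop:left-right-module-correspondence}.

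For the isomorphism claim, I would observe that the right-module operator $L_{(-1)}$ produced from $(W, Y_{W}^{\circ})$ agrees with the original left-module $L_{(1)}$: indeed $L_{(1)}\omega=0$ in any VOA (a standard Virasoro computation using $[L_{(1)}, L_{(-2)}]=3L_{(-1)}$ and $L_{(-1)}\vac=L_{(1)}\vac=0$), so the definition of $Y_{W}^{\circ}$ collapses to $Y_{W}^{\circ}(\omega, x) = \sum_{n\in\Z} L_{(-n)}x^{-n-2}$, whence the coefficient of $x^{0}$ is $L_{(1)}$. Thus local nilpotence of $L_{(1)}$ on $W$ is exactly local nilpotence of the right-module $L_{(-1)}$, and \cref{prop:right-module-translation} provides a right-module isomorphism $e^{-z_{0}L_{(1)}}\colon (W, Y_{W}^{\circ}) \to (W, (Y_{W}^{\circ})^{(z_{0})})$. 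Since the left--right correspondence only repackages the same action of $V$, any linear map intertwining right-module structures also intertwines the associated left-module structures; hence $e^{-z_{0}L_{(1)}}$ is a left-module isomorphism from $(W, Y_{W})$ to $(W, Y_{W}^{[z_{0}]})$. The main obstacle I anticipate is the algebraic bookkeeping in the middle computation: combining the two $L_{(1)}$-exponentials and the two $L_{(0)}$-powers so that the exponents land exactly on $-z_{0}(1+z_{0}x)$ and $-2L_{(0)}$, with the $(-1)^{L_{(0)}}$ signs accounted for under the chosen branch convention. Everything else is formal from the cited propositions.
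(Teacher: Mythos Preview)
Your proposal is correct and follows essentially the same route as the paper: both establish the identity $Y_{W}^{[z_{0}]}=((Y_{W}^{\circ})^{(z_{0})})^{\circ}$ and then invoke \cref{prop:left-right-module-correspondence} and \cref{prop:right-module-translation} for the first assertion. For the isomorphism, the paper appeals directly to the $L_{(1)}$-conjugation formula to get $e^{-z_{0}L_{(1)}}Y_{W}(a,x)e^{z_{0}L_{(1)}}=Y_{W}^{[z_{0}]}(a,x)$, whereas you route through the isomorphism clause of \cref{prop:right-module-translation} after identifying the right-module $L_{(-1)}$ with the left-module $L_{(1)}$; these are equivalent, yours just unwinds one extra layer.
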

\begin{proof}
    The same calculation as in \cite[Remark 2.10]{Li02} shows that for any $a\in V$ and $w\in W$,
    \begin{equation}
        Y_{W}^{[z_0]}(a, x)w=((Y_{W}^{\circ})^{(z_0)})^{\circ}(a, x)w.
    \end{equation}
    The first part is a consequence of \cref{prop:left-right-module-correspondence} and \cref{prop:right-module-translation}.
    Suppose $L_{(1)}$ acts locally nilpotently on $W$, then by the $L_{(1)}$-conjugation formula, we have
    \begin{equation}
        ((Y_{W}^{\circ})^{(z_0)})^{\circ}(a, x)=e^{-z_0L_{(1)}}Y_{W}(a, x)e^{z_0L_{(1)}}.
    \end{equation}
    This proves the second part. 
\end{proof}

For any $s\in \Z$, let $\of{s}$ be the residue of $s$ modulo $T$. For any $x\in \frac{1}{T}\Z$, let $\Tilde{x}:=\of{Tx}$. Then $0\leq \Tilde{x}<T$ and for any $x\in \frac{1}{T}\Z$, we have $x=\lf x \rf+\frac{\Tilde{x}}{T}$.
\begin{remark}
    Suppose $a$ is a homogeneous element in $V^r$, then 
    \begin{equation}
        \Res_{x}x^{m}Y_{W}^{[z_0]}(a, x)=0 \quad \text{unless}\ \Tilde{m}=r.
    \end{equation}
\end{remark}

For $0\leq i, r<T$ and $x\in \frac{1}{T}\Z$, let 
    $\de_{i}(r)=1$ if $i\geq r$; otherwise $\de_{i}(r)=0$.
Set
\begin{equation}\label{eq:lambda-number}
    \lambda(x, r)=-1+\lf x \rf +\de_{\Tilde{x}}(r)+\frac{r}{T}.
\end{equation}
Now we recall the $A_{g, n}(V)$ theory ($n\in \frac{1}{T}\N$) developed in \cite{DLM98c}. Suppose $n\in \frac{1}{T}\N$. Let $O_{g, n}(V)$ be the linear span of all $a\circ_{g, n} b$ and $L_{(-1)}a+L_{(0)}a$, where 
\begin{equation}
    a\circ_{g, n} b=\Res_x\frac{(1+x)^{\wt a+\lambda(n, r)}}{x^{2\lf n \rf+\de_{\Tilde{n}}(r)+\delta_{\Tilde{n}}(T-r)+1}}Y(a, x)b
\end{equation}
for $a\in V^r$ homogeneous and $b\in V$. Define a product $\ast_{g, n}$ on $V$ by
\begin{equation}
    a\ast_{g, n}b=\sum_{i=0}^{\lf n \rf}(-1)^i\binom{\lf n \rf+i}{i}\Res_x\frac{(1+x)^{\wt a+\lf n \rf}}{x^{\lf n \rf+i+1}}Y(a, x)b
\end{equation}
for $a\in V^0$ and 
\begin{equation}
    a\ast_{g, n}b=0
\end{equation}
for $a\in V^r$ with $r\neq 0$.

For $n\in \frac{1}{T}\N$ and a weak $g$-twisted module $M$, let 
\begin{equation}
    \Omega_n(M)=\{u\in M \mid a_{(\wt a-1+k)}u=0 \text{ for } a\in V, k\in \frac{1}{T}\N, \text{ and } k>n\}.
\end{equation}
Or  equivalently, let $\Omega_{n}(M)$ be the space consisting of vectors $u\in M$ such that
\begin{equation}
    x^{\wt a+\lambda(n, r)}Y_{M}(a, x)\in M[[x]]
\end{equation}
for homogeneous $a\in V^{r}$, where $0\leq r<T$. Denote $\Res_{x}x^{\wt a-1}Y_{M}(a, x)$ by $o^{M}(a)$.
\begin{proposition}\cite{DLM98c}\label{prop:o-vanishing}
    Let $M$ be a weak $g$-twisted $V$-module. Then for $a, b\in V$ and $v\in \Omega_n(M)$, 
    \begin{equation}
        o^{M}(a\ast_{g, n}b)v=o^{M}(a)o^{M}(b)v, 
    \end{equation}
and 
    \begin{equation}
        o^{M}(a)v=0 \text{ for } a\in O_{g, n}(V).    
    \end{equation}
\end{proposition}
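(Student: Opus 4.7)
The plan is to follow the classical Zhu--Dong--Li--Mason template: first establish the vanishing claim $o^{M}(a)v=0$ for $a\in O_{g,n}(V)$, then deduce the multiplicative identity $o^{M}(a\ast_{g,n}b)v=o^{M}(a)o^{M}(b)v$ using the twisted Jacobi identity. Since $O_{g,n}(V)$ is spanned by the elements $a\circ_{g,n}b$ together with $(L_{(-1)}+L_{(0)})a$, the vanishing splits into two pieces. The $(L_{(-1)}+L_{(0)})$-piece follows instantly from the $L_{(-1)}$-derivative property $Y_{M}(L_{(-1)}a,x)=\odv{x}Y_{M}(a,x)$ and integration by parts in the residue: for homogeneous $a$,
\[
o^{M}(L_{(-1)}a)=\Res_{x}x^{\wt a}\odv{x}Y_{M}(a,x)=-\wt a\cdot o^{M}(a)=-o^{M}(L_{(0)}a),
\]
so $o^{M}((L_{(-1)}+L_{(0)})a)=0$ on all of $M$, independently of any $\Omega_{n}$-condition.

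For $o^{M}(a\circ_{g,n}b)v=0$ with homogeneous $a\in V^{r}$ and $v\in\Omega_{n}(M)$, I would use the iterate form of the twisted Jacobi identity of \cref{def:g-twisted-module}: express $Y_{M}(Y(a,x_{0})b,x_{2})v$ as a difference of two $\Res_{x_{1}}$-integrals involving the twisting factor $\bigl(\tfrac{x_{2}+x_{0}}{x_{1}}\bigr)^{r/T}$. Substituting this into the residue that defines $o^{M}(a\circ_{g,n}b)v$ and expanding the twisting factor binomially, the fractional exponent $r/T$ reproduces the corresponding shift in $\lambda(n,r)$, while the integer discrepancy $\de_{\Tilde{n}}(r)+\de_{\Tilde{n}}(T-r)$ in the exponent of $x$ in the definition of $a\circ_{g,n}b$ accounts for the two regions of expansion of $(1+x)^{\wt a+\lambda(n,r)}$. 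After a change of variables in $x_{0}$, each region produces an $x_{1}$-integrand whose total positive power of $x_{1}$ is at least $\wt a+\lambda(n,r)+1$, so the alternative characterisation $x^{\wt a+\lambda(n,r)}Y_{M}(a,x)v\in M[[x]]$ of $\Omega_{n}(M)$ forces the residue to vanish on each side, killing the whole expression.

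For the product identity, if $a\in V^{r}$ with $r\neq 0$ then both sides vanish: the LHS by definition of $\ast_{g,n}$, and the RHS because for homogeneous $a$ the index $\wt a-1\in\Z$ does not lie in the set $-1+\tfrac{r}{T}+\Z$ of allowed mode indices of $Y_{M}(a,x)$, forcing $o^{M}(a)=0$. For $a\in V^{0}$ the twisting factor in the Jacobi identity is trivial and the argument reduces essentially to Zhu's original computation: multiply the Jacobi identity by $(1+x_{0})^{\wt a+\lfloor n\rfloor}x_{2}^{\wt b-1}$ and by $\sum_{i=0}^{\lfloor n\rfloor}(-1)^{i}\binom{\lfloor n\rfloor+i}{i}x_{0}^{-\lfloor n\rfloor-i-1}$ coming from the definition of $a\ast_{g,n}b$, then take iterated residues $\Res_{x_{1}}\Res_{x_{2}}$. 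The first Jacobi term contributes exactly $o^{M}(a)o^{M}(b)v$, the second is killed by the $\Omega_{n}(M)$-condition exactly as in the previous paragraph, and the iterate side gives $o^{M}(a\ast_{g,n}b)v$.

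The main obstacle will be the bookkeeping in the second paragraph: matching the powers of $x_{0}$, $x_{1}$, $x_{2}$, and $(1+x_{0})$ generated by the Jacobi identity against the specific exponent $2\lfloor n\rfloor+\de_{\Tilde{n}}(r)+\de_{\Tilde{n}}(T-r)+1$ of $x$ in the definition of $a\circ_{g,n}b$, so that the $\Omega_{n}(M)$-condition kills both regions of expansion simultaneously. The function $\lambda(n,r)$ is engineered to make this matching work, but verifying it cleanly requires a case split on $\Tilde{n}$ versus $r$ and $T-r$.
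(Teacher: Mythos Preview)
The paper does not prove this proposition at all; it is quoted from \cite{DLM98c} as a background result and no argument is given. Your sketch is the standard Dong--Li--Mason argument from that reference, and the outline is correct: the $(L_{(-1)}+L_{(0)})$-piece is immediate from the derivative property, the vanishing of $o^{M}(a\circ_{g,n}b)v$ comes from the iterate form of the twisted Jacobi identity together with the $\Omega_{n}$-truncation, and the product identity for $a\in V^{0}$ reduces to Zhu's computation, with the observation that $o^{M}(a)=0$ for $a\in V^{r}$, $r\neq 0$, handling the remaining cases. There is nothing to compare against in the present paper.
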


Set $A_{g, n}(V)=V/O_{g, n}(V)$, denote the image of $a\in V$ in $A(V)$ by $[a]$, then we have the following results \cite{DLM98c}:
\begin{theorem}\label{thm:twistedZhu}
    Let $(V, Y, \vac, \omega)$ be a vertex operator algebra and $g$ a finite automorphism of $V$ such that $g^T=1$. Let $M=\bigoplus_{i\in \frac{1}{T}\N}M(i)$ be an admissible $g$-twisted module of $V$. Then for $n\in \frac{1}{T}\N$,
    \begin{enumerate}
        \item $A_{g, n}(V)$ is an associative algebra with respect to the product $\ast_{g, n}$. $[\vac]$ is the unit and $[\omega]$ lies in its center. The identity map on $V$ induces an algebra epimorphism from $A_{g, n}(V)$ to $A_{g, n-\frac{1}{T}}(V)$;
        \item $\Omega_n(M)$ is an $A_{g, n}(V)$-module with $[a]$ acting as $o^{M}(a)$ for homogeneous $a$;
        \item The linear map $\theta$ from $V$ to $V$ defined by $\theta(a)=e^{L(1)}(-1)^{L(0)}a$ for $a\in V$ induces an anti-isomorphism from $A_{g, n}(V)$ to $A_{g^{-1}, n}(V)$.
    \end{enumerate}
\end{theorem}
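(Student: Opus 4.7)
The plan is to verify the three claims in order, following Zhu's strategy adapted to the twisted level-$n$ setting. Part (2) is the most direct: after checking that $\Omega_n(M)$ is stable under every $o^M(a)$ — which follows by computing $[a_{(\wt a-1)}, b_{(\wt b-1+k)}]$ from the Jacobi identity and observing that the resulting commutators have mode indices still bounded below in the required way — the module structure on $\Omega_n(M)$ reduces immediately to \cref{prop:o-vanishing}.

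The heart of the argument is Part (1). First I would show $O_{g,n}(V)$ is a two-sided ideal for $\ast_{g,n}$. The key tool is the Jacobi identity applied to $Y(a,x_1)Y(b,x_2)c$: by extracting the residue in $x_0$ of suitable products $x_0^{p} x_1^{q} x_2^{s}$, one expresses $a \ast_{g,n}(b \circ_{g,n} c)$ and $(b \circ_{g,n} c) \ast_{g,n} a$ as sums of elements of the form $a' \circ_{g,n} c'$ modulo $(L_{(-1)} + L_{(0)})$-terms. The twist factor $((x_2+x_0)/x_1)^{r/T}$ forces case distinctions on the $g$-grading of each argument, which is precisely the source of the $\de_{\Tilde{n}}(r)$ and $\de_{\Tilde{n}}(T-r)$ terms in the exponent of $x$ in $a \circ_{g,n} b$. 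Associativity of the induced product can then be verified either directly by an analogous Jacobi manipulation or, more conceptually, via the faithfulness of the action on a universal generalized Verma-type module generated by a free $A_{g,n}(V)$-module. The identity $\vac \ast_{g,n} a = a$ is immediate, while $a \ast_{g,n}\vac \equiv a$ modulo $O_{g,n}(V)$ follows by extracting the low-order terms of $(1+x)^{\wt a + \lf n \rf}$. Centrality of $[\omega]$ comes from writing $\omega \ast_{g,n} a - a \ast_{g,n} \omega$ as a total $L_{(-1)}$-derivative modulo $O_{g,n}(V)$, using $\omega = L_{(-2)}\vac$ together with the $L_{(-1)}$-derivative property. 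Finally, the surjection $A_{g,n}(V) \twoheadrightarrow A_{g,n-\frac{1}{T}}(V)$ reduces to the inclusion $O_{g,n}(V) \subseteq O_{g,n-\frac{1}{T}}(V)$, which follows by expanding the factor $(1+x)^{\lambda(n,r) - \lambda(n-\frac{1}{T},r)}$ and writing $a \circ_{g,n} b$ as a linear combination of terms $a \circ_{g,n-\frac{1}{T}} b'$; compatibility of $\ast_{g,n}$ with $\ast_{g,n-\frac{1}{T}}$ on the quotient then follows from a similar binomial computation.

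For Part (3), the strategy is to show $\theta$ sends $O_{g,n}(V)$ into $O_{g^{-1},n}(V)$ and satisfies $\theta(a \ast_{g,n} b) \equiv \theta(b) \ast_{g^{-1},n} \theta(a) \pmod{O_{g^{-1},n}(V)}$. Both identities reduce to the opposite Jacobi identity \labelcref{eq:opposite-jacobi} combined with the conjugation formula \labelcref{eq:conjugation-formula-*}; the substitution $x \mapsto -x/(1+x)$ converts the $(1+x)$-weights attached to $\circ_{g,n}$ into those attached to $\circ_{g^{-1},n}$, and the sign $(-1)^{L_{(0)}}$ accounts for the change from $V^r$ to $V^{T-r}$ (i.e.\ the flip $g \leftrightarrow g^{-1}$). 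Checking $\theta^2 \equiv \textup{id}$ on the quotient then provides the inverse.

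The main obstacle will be Part (1): the bookkeeping of the fractional twist exponents $r/T$ together with the floor-function structure in $\lambda(n,r)$ makes the ideal-property calculation the most delicate step and requires careful case analysis on the $g$-grading of every element involved in each residue extraction. The remaining verifications, once this algebraic framework is in place, are essentially systematic applications of the Jacobi identity and the $L_{(-1)}$- and $L_{(1)}$-conjugation formulas.
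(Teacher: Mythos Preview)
The paper does not prove this theorem at all: it is stated as a background result with the citation \cite{DLM98c} and no argument is given. Your proposal is therefore not comparable to anything in the paper itself; what you have sketched is essentially the standard proof strategy from the cited reference (Dong--Li--Mason), which in turn generalizes Zhu's original argument to the twisted level-$n$ setting. In that sense your outline is correct in spirit and matches the literature, but for the purposes of this paper the appropriate ``proof'' is simply the citation.
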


The following result will be used later, see \cite[Lemma 2.6]{Li22}.
\begin{proposition}\label{prop:Omega-is-submodule}
    Let $W$ be a weak $g$-twisted module. Set $\Omega_{n}(W)=0$ for $n<0$. Then, for any homogeneous $a\in V$ and $m, n\in \frac{1}{T}\N$, 
    \begin{equation}
        a_{(m)}\Omega_{n}(W)\subseteq \Omega_{n+\wt a_{(m)}}(W),
    \end{equation}
    where $\wt a_{(m)}=\wt a-m-1$. Furthermore, the space 
    \begin{equation}
        \Omega_{\infty}(W)=\cup_{n\in \frac{1}{T}\N} \Omega_{n}(W)
    \end{equation}
    is a weak submodule of $W$.
\end{proposition}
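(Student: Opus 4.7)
The plan is first to establish the filtration inclusion $a_{(m)}\Omega_n(W) \subseteq \Omega_{n + \wt a_{(m)}}(W)$ (using the paper's convention that $\Omega_k(W) = 0$ for $k < 0$); the weak-submodule claim for $\Omega_\infty(W) = \bigcup_n \Omega_n(W)$ then follows at once by taking unions, since any $u \in \Omega_\infty(W)$ lies in some $\Omega_n(W)$ and $a_{(m)}u$ therefore lies in $\Omega_{n+\wt a_{(m)}}(W) \subseteq \Omega_\infty(W)$.

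Fix $u \in \Omega_n(W)$, a homogeneous $a \in V^r$, and set $n' = n + \wt a_{(m)}$. If $n' < 0$ then $m > \wt a - 1 + n$, and $a_{(m)}u = 0$ is immediate from the definition of $\Omega_n(W)$. So assume $n' \geq 0$; the content is to verify $b_{(p)} a_{(m)} u = 0$ for every homogeneous $b \in V^s$ and every admissible mode $p > \wt b - 1 + n'$. The main tool is the commutator formula extracted from the twisted Jacobi identity of \cref{def:g-twisted-module} by taking $\Res_{x_0}$ and then the coefficient of $x_1^{-p-1}$:
\[
[b_{(p)}, a_{(m)}] \;=\; \sum_{j \geq 0} \binom{p}{j}(b_{(j)}a)_{(p+m-j)}.
\]
Each summand is a mode of weight $\wt b_{(p)} + \wt a_{(m)} < -n' + \wt a_{(m)} = -n$, hence annihilates $u$ by the defining vanishing of $\Omega_n(W)$. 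Therefore $[b_{(p)}, a_{(m)}] u = 0$, and the task reduces to showing $a_{(m)} b_{(p)} u = 0$.

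When $\wt a_{(m)} \geq 0$ this is immediate: the assumption on $p$ strengthens to $p > \wt b - 1 + n$, forcing $b_{(p)} u = 0$ outright. The delicate case is $-n \leq \wt a_{(m)} < 0$, where $b_{(p)} u$ need not vanish. My plan here is to argue by induction on $n$, supported by the easy preliminary—itself another direct commutator computation—that $c_{(q)}\Omega_n(W) \subseteq \Omega_n(W)$ whenever $\wt c_{(q)} \leq 0$. Using this scaffold together with the inductive hypothesis, one sharpens the location of $b_{(p)} u$ to $\Omega_{n + \wt b_{(p)}}(W)$; then the already-settled direct vanishing case yields $a_{(m)} b_{(p)} u = 0$, since $n + \wt b_{(p)} + \wt a_{(m)} < 0$ by the bound on $p$.

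The main obstacle is the apparent circularity of this inductive step: both invocations of the proposition in the delicate case formally sit at the same $\Omega_n(W)$-level. The plan to break the circle is to first secure the coarser inclusion $a_{(m)}\Omega_n(W) \subseteq \Omega_{\max(n,n')}(W)$ by a direct commutator calculation independent of any induction (it follows by combining Case B with the preliminary above), and then use this weaker bound as a foothold to run the induction that sharpens the conclusion to $\Omega_{n'}(W)$. Getting this bootstrap right is the technical heart of the argument.
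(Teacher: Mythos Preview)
Your commutator reduction and the case $\wt a_{(m)}\ge 0$ are fine; the gap is in the delicate case $-n\le \wt a_{(m)}<0$. The bootstrap you propose does not resolve the circularity you correctly identify. The coarser inclusion $c_{(q)}\Omega_n(W)\subseteq\Omega_n(W)$ for $\wt c_{(q)}\le 0$ yields only $b_{(p)}u\in\Omega_n(W)$, not the finer $b_{(p)}u\in\Omega_{n+\wt b_{(p)}}(W)$ that you need; and the inductive hypothesis on $n$ is inapplicable, since it speaks only about vectors already lying in some $\Omega_{n''}(W)$ with $n''<n$, whereas $u$ sits genuinely at level $n$. So the step ``sharpen the location of $b_{(p)}u$ to $\Omega_{n+\wt b_{(p)}}(W)$'' is precisely the level-$n$ instance of the proposition applied to the mode $b_{(p)}$---exactly what is being proved. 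Worse, since $[b_{(p)},a_{(m)}]u=0$ throughout this range, the two products $a_{(m)}b_{(p)}u$ and $b_{(p)}a_{(m)}u$ coincide, and no sequence of commutator moves generates new information. A minimal instance makes this vivid: take $n'=0$, so $\wt a_{(m)}=-n$, and take $\wt b_{(p)}=-n$ as well. Neither mode kills $u$; their commutator has weight $-2n<-n$ and already annihilates $u$; so the commutator identity is vacuous and you are left needing $a_{(m)}b_{(p)}u=0$ with no leverage whatsoever from your two inputs.

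The paper itself does not supply a proof; it cites Li's untwisted argument (\cite{Li01a}, Lemma~3.8) and asserts that it extends with minor modification. That argument uses more of the twisted Jacobi identity than the commutator consequence alone: one needs the associativity (iterate) side, which exploits the truncation $x_1^{\wt b+\lambda(n,s)}Y_W(b,x_1)u\in W[[x_1]]$ coming from $u\in\Omega_n(W)$ to rewrite the two-mode product $b_{(p)}a_{(m)}u$ in terms of single modes $(b_{(j)}a)_{(l)}u$ acting on $u$. Unlike the commutator formula, this relation is asymmetric in $a$ and $b$ and uses the $\Omega_n$-condition on $u$ with respect to $b$ directly; that asymmetry is what supplies the missing vanishing. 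Your plan, which extracts only the $\Res_{x_0}$-part of Jacobi, cannot see this.
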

\begin{proof}
    The proof for the untwisted case is given in \cite[Lemma 3.8]{Li01a}, which can be extended to the twisted case with a minor modification. We omit it.
\end{proof}

By a similar argument as in \cite[Lemma 3.8]{Li01a}, for a weak $g$-twisted module $W$ and any $n\in \frac{1}{T}\N$, $L_{(1)}$ acts locally nilpotently on $\Omega_{n}(W)$. Hence, $e^{z_0L_{(1)}}$ is well defined on $\Omega_{n}(W)$ for any $z_0\in \C$. The following results are generalizations of \cite[Propositions 3.12, 3.13]{Li01a} to weak twisted modules. Since the proof therein also works here, we omit it.
\begin{proposition}\label{prop:deformed-A_{g, n}(V)-module}
    Let $(W, Y_{W})$ be a $g$-twisted module and $z_0$ be a complex number. Then $\Omega_{n}(W, Y_{W})=\Omega_{n}(W, Y_{W}^{[z_0]})$ for any $n\in \frac{1}{T}\N$, and $e^{-z_0L_{(1)}}$ is an isomorphism of $A_{g, n}(V)$-module from $\Omega_{n}(W, Y_{W})$ to $\Omega_{n}(W, Y_{W}^{[z_0]})$. Furthermore, for homogeneous $a\in V$ and $m\in \frac{1}{T}\Z$, 
    \begin{equation}\label{eq:deformed-residue}
        \Res_{x}x^{m}Y_{W}^{[z_0]}(a, x)=\Res_{x}x^{m}(1-z_0x)^{2\wt a-m-2}Y_{W}(e^{-z_0(1-z_0x)^{-1}L_{(1)}}a, x).
    \end{equation}
\end{proposition}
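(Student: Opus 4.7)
The plan has three stages: verify the explicit residue formula \labelcref{eq:deformed-residue}, deduce that $\Omega_{n}$ coincides for $Y_{W}$ and $Y_{W}^{[z_{0}]}$, and finally promote this to an $A_{g, n}(V)$-module isomorphism via \cref{prop:L(-1)-resulted-module}.

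First I would prove \labelcref{eq:deformed-residue} by the formal change of variables $t = x/(1+z_{0}x)$, which is a bijective formal power series with inverse $x = t/(1-z_{0}t)$. Then $1+z_{0}x = (1-z_{0}t)^{-1}$ and $dx/dt = (1-z_{0}t)^{-2}$. For homogeneous $a$, $(1+z_{0}x)^{-2L_{(0)}}a = (1-z_{0}t)^{2\wt a}a$ and $e^{-z_{0}(1+z_{0}x)L_{(1)}}a = e^{-z_{0}(1-z_{0}t)^{-1}L_{(1)}}a$, the latter being a finite sum in $L_{(1)}^{j}a$ because $V$ has bounded-below conformal grading. Plugging these into the definition of $Y_{W}^{[z_{0}]}$ and applying the formal residue change-of-variable rule $\Res_{x}f(x) = \Res_{t}f(x(t))(dx/dt)$ yields the right-hand side of \labelcref{eq:deformed-residue} after relabeling $t$ back as $x$.

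Next I would show $\Omega_{n}(W, Y_{W}) = \Omega_{n}(W, Y_{W}^{[z_{0}]})$. Suppose $v \in \Omega_{n}(W, Y_{W})$, let $a \in V$ be homogeneous, and take $k > n$. Applying \labelcref{eq:deformed-residue} with $m = \wt a - 1 + k$ and expanding $(1 - z_{0}x)^{\wt a - k - 1}$ as a binomial series in $x$ together with $e^{-z_{0}(1-z_{0}x)^{-1}L_{(1)}}a = \sum_{j\geq 0}\frac{(-z_{0})^{j}}{j!}(1-z_{0}x)^{-j}L_{(1)}^{j}a$, one obtains a finite sum of terms of the form $c_{j, \ell}\,(L_{(1)}^{j}a)_{(\wt a - 1 + k + \ell)}v$ with $j, \ell \geq 0$. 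Since $L_{(1)}^{j}a$ is homogeneous of weight $\wt a - j$, each such mode equals $(L_{(1)}^{j}a)_{(\wt(L_{(1)}^{j}a) - 1 + (k + \ell + j))}v$, which vanishes because $k + \ell + j \geq k > n$ and $v \in \Omega_{n}(W, Y_{W})$. Thus $v \in \Omega_{n}(W, Y_{W}^{[z_{0}]})$. The reverse inclusion follows from the symmetric argument applied with $-z_{0}$, together with the involutivity $(Y_{W}^{[z_{0}]})^{[-z_{0}]} = Y_{W}$, which is a direct formal computation from the defining formula in \cref{prop:L(-1)-resulted-module}.

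Finally, for the module isomorphism, I note that $L_{(1)}$ decreases $L_{(0)}$-weight by one, so it acts locally nilpotently on $\Omega_{n}(W)$, making $e^{\pm z_{0}L_{(1)}}$ a well-defined linear bijection of $\Omega_{n}(W)$ onto itself. Restricting the identity $e^{-z_{0}L_{(1)}}Y_{W}(a, x)e^{z_{0}L_{(1)}} = Y_{W}^{[z_{0)}]}(a, x)$ supplied by the proof of \cref{prop:L(-1)-resulted-module} to $\Omega_{n}(W)$ and taking $\Res_{x}x^{\wt a - 1}$ on both sides shows that $e^{-z_{0}L_{(1)}}$ intertwines $o^{W}(a)$ with the corresponding zero-mode operator defined via $Y_{W}^{[z_{0}]}$, so it induces an isomorphism of $A_{g, n}(V)$-modules. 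The delicate point is the formal change of variables in the first step: since $2\wt a - m - 2$ is generally non-integral, $(1-z_{0}x)^{2\wt a - m - 2}$ must be interpreted as a binomial formal series in $x$, which is legitimate, and local nilpotency of $L_{(1)}$ on the homogeneous element $a$ guarantees that every residue which appears reduces to a genuinely finite linear combination of operators on $v$.
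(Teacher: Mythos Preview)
Your argument is correct and complete in substance. The paper does not give a proof of this proposition at all; it simply asserts that the proofs of \cite[Propositions 3.12, 3.13]{Li01a} in the untwisted setting carry over verbatim to weak twisted modules, so your direct verification via the formal change of variables $t = x/(1+z_0 x)$, mode expansion on $\Omega_n$, and the conjugation identity from \cref{prop:L(-1)-resulted-module} is exactly the expected elaboration.

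One minor imprecision: your justification that $L_{(1)}$ is locally nilpotent on $\Omega_n(W)$ because it ``decreases $L_{(0)}$-weight by one'' only applies to ordinary modules, whereas the proposition (and its later applications in the paper to $\mathfrak{D}_{g_1,g_2}^{(-1)}(W)$) is intended for weak modules with no $L_{(0)}$-grading. The correct reason, stated just before the proposition and attributed to \cite[Lemma 3.8]{Li01a}, is that $L_{(1)}$ maps $\Omega_k(W)$ into $\Omega_{k-1}(W)$ by \cref{prop:Omega-is-submodule} and $\Omega_k(W)=0$ once $k<0$; this also shows $e^{\pm z_0 L_{(1)}}$ preserves $\Omega_n(W)$. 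Similarly, the finiteness of your sum over $\ell$ is secured not by nilpotency of $L_{(1)}$ on $a$ but by the truncation condition $Y_W(b,x)v \in x^{-r/T}W((x))$.
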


Next, we recall the twisted regular representation theory developed in \cite{LS23}. From now on, we let $g_1, g_2$ be two commuting automorphisms of $V$, $T$ a positive integer such that $g_1
^T=g_2^T=1$. Set $g = (g_1g_2)^{-1}$. Then $V$ has the following eigenspace decomposition:
\begin{equation}
    V=\oplus_{0\leq j_1, j_2<T}V^{(j_1, j_2)},
\end{equation}
where 
\begin{equation}\label{eq:common-eigenvector-space}
    V^{(j_1, j_2)}=\{v\in V\mid g_kv=e^{\frac{2\pi \iu j_k}{T}}v, k=1, 2\}.
\end{equation}

Let $W$ be a weak $g$-twisted module. Denote by $W^{*}$ the full dual space of $W$, and define a linear map 
\[Y_{W}^{*}\colon V \longrightarrow \left(\End(W^*)\right)[[x^{-\frac{1}{T}}, x^{\frac{1}{T}}]]\]
by the equation
\begin{equation}\label{eq:Y*-action}
    \< Y_{W}^*(a, x)f, u\>=\< f, Y_{W}(e^{xL_{(1)}}(-x^{-2})^{L_{(0)}}a, x^{-1})u\>
\end{equation}
for $a\in V$, $f\in W^*$, and $u\in W$. 

We recall the following definitions in \cite{LS23}:
\begin{definition}
    Let $(W, Y_{W})$ be a weak $g$-twisted module, and let $z$ be a nonzero complex number. Denote by $\mathfrak{D}^{(z)}_{g_1, g_2}(W)$ the set of all linear functionals $\alpha$ in $W^*$ satisfying the condition that for every $a\in V^{(j_1, j_2)}$ with $0\leq j_1, j_2<T$, there exists $k\in \N$ such that
    \begin{equation}\label{eq: P(z)-linear functionals-1}
        x^{\frac{j_2}{T}}(x-z)^{k+\frac{j_1}{T}}Y_{W}^*(a, x)\alpha\in W^*((x)),
    \end{equation}
    i.e., for any $a\in V^{(j_1, j_2)}$, there exist $l, k\in \N$ such that 
    \begin{equation}\label{eq: P(z)-linear functionals-2}
        x^{l+\frac{j_2}{T}}(x-z)^{k+\frac{j_1}{T}}Y_{W}^*(a, x)\alpha\in W^*[[x]].
    \end{equation}
\end{definition}
Let $\alpha\in \mathfrak{D}_{g_1, g_2}^{(z)}(W)$. For $a\in V^{(j_1, j_2)}$ with $0\leq j_1, j_2<T$, define 
\begin{equation}\label{eq:Y-R}
    Y^{R}_{g}(a, x)\alpha=e^{-(k+\frac{j_1}{T})\pi \iu}(z-x)^{-k-\frac{j_1}{T}}\left[(x-z)^{k+\frac{j_1}{T}}Y_{W}^*(a, x)\alpha\right],
    \end{equation}
where $k$ is any nonnegative integer such that \labelcref{eq: P(z)-linear functionals-1} holds. 

\begin{remark}\label{rmk:arg>=pi}
    Define $(Y^{R}_{g}\circ g_1)(a, x)=Y^{R}_{g}(g_1a, x)$ for $a\in V$.  
    Note that 
    \begin{equation}
        (-z+x)^{-k-\frac{j_1}{T}}=
        \begin{cases}
        e^{-(k+\frac{j_1}{T})\pi \iu}(z-x)^{-k-\frac{j_1}{T}}& \text{when arg}(z)<\pi,\\
        e^{\frac{2\pi \iu j_1}{T}}e^{-(k+\frac{j_1}{T})\pi \iu}(z-x)^{-k-\frac{j_1}{T}}& \text{when arg}(z)\geq \pi.
        \end{cases}
    \end{equation}
    For $a\in V^{(j_1, j_2)}$, since 
    \begin{equation}
        Y^{R}_{g}(g_1a, x)=e^{\frac{2\pi \iu j_1}{T}}Y^{R}_{g}(a, x),
    \end{equation}
    we have
    \begin{equation}
        (x-z)^{k+\frac{j_1}{T}}Y_{W}^*(a, x)\alpha=
        \begin{cases}
        (-z+x)^{k+\frac{j_1}{T}}Y^{R}_{g}(a, x)& \text{when arg}(z)<\pi,\\
        (-z+x)^{k+\frac{j_1}{T}}(Y^{R}_{g}\circ g_1)(a, x)& \text{when arg}(z)\geq \pi.
        \end{cases}
    \end{equation}
\end{remark}

Note that for $\alpha\in \mathfrak{D}^{(z)}_{g_1, g_2}(W)$ and $a\in V^{(j_1, j_2)}$, it was proved in \cite{LS23} that \labelcref{eq: P(z)-linear functionals-2} implies that there exist $l, k\in \N$(the same $l, k$ as in \labelcref{eq: P(z)-linear functionals-2}) such that 
\begin{equation}\label{eq:P(z)-linear functionals-3}
    (x+z)^{l+\frac{j_2}{T}}x^{k+\frac{j_1}{T}}Y_{W}^*(a, x+z)\alpha\in W^*[[x]],
\end{equation}
i.e., there exists $l\in \N$ such that 
\begin{equation}
    (x+z)^{l+\frac{j_2}{T}}x^{\frac{j_1}{T}}Y_{W}^*(a, x+z)\alpha\in W^*((x)).
\end{equation}
Now define
\begin{equation}\label{eq:Y-L}
    Y_{g}^{L}(a, x)\alpha=(z+x)^{-l-\frac{j_2}{T}}\left[(x+z)^{l+\frac{j_2}{T}}Y_{W}^*(a, x+z)\alpha\right].
\end{equation}
One of the main results in \cite{LS23} is the following:
\begin{theorem}\label{thm:twisted-regular-representation}
    Let $(W, Y_{W})$ be a weak $g$-twisted module and $z$ be a nonzero complex number. Then:
    \begin{enumerate}[(1)]
        \item The pair $\left(\mathfrak{D}_{g_1, g_2}^{(z)}(W), Y_{g}^R\right)$ carries a weak $g_2$-twisted $V$-module structure;
        \item The pair $\left(\mathfrak{D}_{g_1, g_2}^{(z)}(W), Y_{g}^L\right)$ carries a weak $g_1$-twisted $V$-module structure;
        \item For $a, b\in V$, we have $Y_{g}^L(a, x_1)Y_{g}^{R}(b, x_2)=Y_{g}^R(b, x_2)Y_{g}^{L}(a, x_1)$;
        \item $\mathfrak{D}_{g_1, g_2}^{(z)}(W)$ is a weak $g_1\otimes g_2$-twisted $V\otimes V$-module with the vertex operator map $Y_{g}^{\reg}(\cdot, x)$ defined by
        \begin{equation}
            Y_{g}^{\reg}(a\otimes b, x)=Y_{g}^{L}(a, x)Y_{g}^{R}(b, x) \quad \text{for}\ a, b\in V.
        \end{equation}
    \end{enumerate}
\end{theorem}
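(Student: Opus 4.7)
The plan is to build both vertex operators from the single dualized series $Y_W^*(a,x)\alpha$, by isolating its singularities at $x=0$ and at $x=z$. The decomposition $V=\oplus_{j_1,j_2}V^{(j_1,j_2)}$ already couples the two fractional exponents $j_1/T$ and $j_2/T$ to the two singularities, so $Y_g^R$ essentially records the $x=z$ part (inherently $g_2$-twisted, since $\oplus_{j_1}V^{(j_1,j_2)}$ is the $e^{2\pi\iu j_2/T}$-eigenspace of $g_2$) while $Y_g^L$ records the $x=0$ part (inherently $g_1$-twisted via $\oplus_{j_2}V^{(j_1,j_2)}$).

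\textbf{Step 1 (well-definedness).} I would first check that \labelcref{eq:Y-R} and \labelcref{eq:Y-L} do not depend on the choice of witnesses $k$, $l$ in \labelcref{eq: P(z)-linear functionals-1} and \labelcref{eq:P(z)-linear functionals-3}. Replacing $k$ by $k+1$ multiplies the extracted product by an extra $(x-z)$ and the prefactor by $(z-x)^{-1}$, and the convention $(-1)^\lambda=e^{\pi\lambda\iu}$ makes these collapse to give the same answer; likewise for $l$.

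\textbf{Step 2 (truncation and closure).} Next I would show that for $a\in V^{(j_1,j_2)}$, the series $Y_g^R(a,x)\alpha$ lies in $x^{-j_2/T}\mathfrak{D}_{g_1,g_2}^{(z)}(W)((x))$ and $Y_g^L(a,x)\alpha$ lies in $x^{-j_1/T}\mathfrak{D}_{g_1,g_2}^{(z)}(W)((x))$. The truncation follows directly from the defining formulas and the two-sided pole bounds \labelcref{eq: P(z)-linear functionals-2} and \labelcref{eq:P(z)-linear functionals-3}. Closure, that each coefficient is again in $\mathfrak{D}_{g_1,g_2}^{(z)}(W)$, is verified by feeding such a coefficient into another application of $Y_g^R$ or $Y_g^L$ and reading off the required pole orders at $0$ and $z$ from the iterated defining formulas.

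\textbf{Step 3 (Jacobi identities).} This is the main obstacle. I would start from the opposite Jacobi identity \labelcref{eq:opposite-jacobi} for $Y_W^\circ$, equivalently, from the Jacobi identity of $(W,Y_W)$ paired against $\alpha$ via $Y_W^*$. For $a\in V^{(j_1,j_2)}$ the twisting exponent in that identity is $-r/T\equiv(j_1+j_2)/T\pmod 1$, since $W$ is $(g_1g_2)^{-1}$-twisted. The strategy is to multiply the identity through by the appropriate product of pole-clearing factors at $0$ and $z$, apply the substitutions $x_i\mapsto x_i+z$ (or $x_i\mapsto x_i$) that convert $Y_W^*$ into either $Y_g^L$ or $Y_g^R$ on each side, and observe that the fractional exponent $(j_1+j_2)/T$ on $((x_2+x_0)/x_1)$ splits additively into a $j_2/T$ piece, absorbed into the $g_2$-twist of $Y_g^R$, and a $j_1/T$ piece, absorbed into the $g_1$-twist of $Y_g^L$, once the expansion point is moved. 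Carrying this out with $x_1,x_2$ near $z$ yields the $g_2$-twisted Jacobi identity for $Y_g^R$, proving (1), and with $x_1,x_2$ near $0$ yields the $g_1$-twisted Jacobi identity for $Y_g^L$, proving (2).

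\textbf{Step 4 (commutativity and tensor structure).} Because $Y_g^L(a,x_1)\alpha$ is extracted from $Y_W^*$ after clearing the singularity at $x_1=z$, while $Y_g^R(b,x_2)\alpha$ is extracted after clearing the singularity at $x_2=0$, the two can be read off simultaneously from a single product $Y_W^*(a,x_1)Y_W^*(b,x_2)\alpha$ by extracting disjoint singularities. A separation-of-singularities $\delta$-function argument in the spirit of Step 3, with $x_0$ chosen to decouple the two variables, gives (3). Given (1), (2), and (3), part (4) is formal: $Y_g^{\reg}(a\otimes b,x)=Y_g^L(a,x)Y_g^R(b,x)$ manifestly satisfies the vacuum and $L_{(-1)}$-derivative properties, and the Jacobi identity on $V\otimes V$ with twist $g_1\otimes g_2$ decouples into the Jacobi identities of $Y_g^L$ and $Y_g^R$ thanks to their commutativity.
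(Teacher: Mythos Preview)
The paper does not contain a proof of this theorem: it is stated with the preamble ``One of the main results in \cite{LS23} is the following,'' and is simply quoted from that reference as background. There is therefore no proof in the present paper to compare your attempt against.

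That said, your sketch follows the standard architecture of Li's (untwisted) regular representation papers \cite{Li02} and is the expected strategy for the twisted version in \cite{LS23}: extract $Y_g^R$ and $Y_g^L$ from $Y_W^*$ by clearing the fractional-order poles at $x=z$ and $x=0$ respectively, verify well-definedness and truncation, and then push the opposite Jacobi identity \labelcref{eq:opposite-jacobi} through the pole-clearing substitutions to split the combined twist exponent $(j_1+j_2)/T$ into a $j_1/T$ part for $Y_g^L$ and a $j_2/T$ part for $Y_g^R$. One point to be careful about in Step~3: the actual computations in \cite{LS23} proceed by first establishing the three-term identity \labelcref{eq:R-L-*} relating $Y_W^*$, $Y_g^R$, and $Y_g^L$ (this is \cref{prop:R-L-*} here), and then deducing both Jacobi identities and the commutativity from it, rather than by separately shifting $x_1,x_2$ ``near $z$'' and ``near $0$'' as you describe. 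Your heuristic is sound, but the cleanest execution goes through that single master identity.
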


It is also proved in \cite{LS23} that $Y_{W}^{*}$, $Y_{g}^{R}$ and $Y_{g}^{L}$ satisfies the following relation:
\begin{proposition}\label{prop:R-L-*}
    Let $\alpha\in \mathfrak{D}_{g_1, g_2}^{(z)}(W)$, $a\in V^{(j_1, j_2)}$ with $j_1, j_2\in Z$. Then
    \begin{equation}\label{eq:R-L-*}
    \begin{aligned}
        \MoveEqLeft
        x_0^{-1}\delta(\frac{x_1-z}{x_0})(\frac{x_1-z}{x_0})^{\frac{j_1}{T}}Y_{W}^{*}(a,x_1)\alpha\\
        -&e^{\frac{j_1}{T}\pi \iu}x_0^{-1}\delta(\frac{-z+x_1}{x_0})(\frac{z-x_1}{x_0})^{\frac{j_1}{T}}Y_{g}^{R}(a,x_1)\alpha\\
        =&x_1^{-1}\delta(\frac{z+x_0}{x_1})(\frac{z+x_0}{x_1})^{\frac{j_2}{T}}Y_{g}^{L}(a, x_0)\alpha. 
    \end{aligned}
    \end{equation}
\end{proposition}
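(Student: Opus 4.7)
The plan is to reduce \eqref{eq:R-L-*} to the standard formal delta identity
\begin{equation*}
    x_0^{-1}\delta\!\left(\frac{x_1-z}{x_0}\right) - x_0^{-1}\delta\!\left(\frac{-z+x_1}{x_0}\right) = x_1^{-1}\delta\!\left(\frac{z+x_0}{x_1}\right),
\end{equation*}
while carrying the fractional exponents $\frac{j_1}{T}$ and $\frac{j_2}{T}$ as factors attached to the twisted deltas already present in \eqref{eq:R-L-*}. Fix $a\in V^{(j_1,j_2)}$ and $\alpha\in \mathfrak{D}_{g_1,g_2}^{(z)}(W)$, and choose $k,l\in \N$ large enough so that $x^{l+\frac{j_2}{T}}(x-z)^{k+\frac{j_1}{T}}Y_{W}^{*}(a,x)\alpha\in W^*[[x]]$. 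From the definition \eqref{eq:Y-R} of $Y_{g}^{R}$, together with the sign convention $(z-x)^{k+\frac{j_1}{T}} = e^{-(k+\frac{j_1}{T})\pi\iu}(-z+x)^{k+\frac{j_1}{T}}$, I will first extract the intertwining identity
\begin{equation*}
    (-z+x_1)^{k+\frac{j_1}{T}}\,Y_{g}^{R}(a,x_1)\alpha = (x_1-z)^{k+\frac{j_1}{T}}\,Y_{W}^{*}(a,x_1)\alpha,
\end{equation*}
which serves as the bridge between $Y_{g}^{R}$ and $Y_{W}^{*}$.

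Using the intertwining identity I will rewrite each of the two terms on the left-hand side of \eqref{eq:R-L-*} into the common form $x_0^{-k-1-\frac{j_1}{T}}\,\delta(\cdots)\,\bigl[(x_1-z)^{k+\frac{j_1}{T}}Y_{W}^{*}(a,x_1)\alpha\bigr]$. For the first term, inserting the factorization $Y_{W}^{*}(a,x_1)\alpha = (x_1-z)^{-k-\frac{j_1}{T}}\bigl[(x_1-z)^{k+\frac{j_1}{T}}Y_{W}^{*}(a,x_1)\alpha\bigr]$ and applying the integer-power cutting $\delta\!\left(\frac{x_1-z}{x_0}\right)(x_1-z)^{-k} = \delta\!\left(\frac{x_1-z}{x_0}\right)x_0^{-k}$ does the job. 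For the second term, the conversion $(z-x_1)^{\frac{j_1}{T}} = e^{-\frac{j_1}{T}\pi\iu}(-z+x_1)^{\frac{j_1}{T}}$ produces a phase that cancels the explicit $e^{\frac{j_1}{T}\pi\iu}$ in \eqref{eq:R-L-*}; the intertwining identity then rewrites $(-z+x_1)^{\frac{j_1}{T}}Y_{g}^{R}(a,x_1)\alpha$ as $(-z+x_1)^{-k}(x_1-z)^{k+\frac{j_1}{T}}Y_{W}^{*}(a,x_1)\alpha$, and the analogous integer-power cutting for $\delta\!\left(\frac{-z+x_1}{x_0}\right)$ applies. Subtracting and invoking the basic delta identity collapses the left-hand side of \eqref{eq:R-L-*} to
\begin{equation*}
    x_0^{-k-\frac{j_1}{T}}\,x_1^{-1}\delta\!\left(\frac{z+x_0}{x_1}\right)(x_1-z)^{k+\frac{j_1}{T}}Y_{W}^{*}(a,x_1)\alpha.
\end{equation*}

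For the right-hand side of \eqref{eq:R-L-*}, I substitute the definition \eqref{eq:Y-L} of $Y_{g}^{L}$, use the cancellation $(z+x_0)^{\frac{j_2}{T}}\cdot(z+x_0)^{-l-\frac{j_2}{T}} = (z+x_0)^{-l}$, and apply the integer-power cutting $\delta\!\left(\frac{z+x_0}{x_1}\right)(z+x_0)^{-l} = \delta\!\left(\frac{z+x_0}{x_1}\right)x_1^{-l}$ to bring the right-hand side into the form $x_1^{-1-\frac{j_2}{T}-l}\delta\!\left(\frac{z+x_0}{x_1}\right)\bigl[(x_0+z)^{l+\frac{j_2}{T}}Y_{W}^{*}(a,x_0+z)\alpha\bigr]$. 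Equality of the two reduced expressions then follows from the formal identification $x_1 = z+x_0$ enforced by $\delta\!\left(\frac{z+x_0}{x_1}\right)$, which matches $(x_1-z)^{k+\frac{j_1}{T}}Y_{W}^{*}(a,x_1)\alpha$ with $x_0^{k+\frac{j_1}{T}}Y_{W}^{*}(a,x_0+z)\alpha$; the remaining powers cancel algebraically between the two sides.

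The main obstacle is bookkeeping of expansion conventions and phase factors: the three formal series $(x_1-z)^{k+\frac{j_1}{T}}$, $(-z+x_1)^{k+\frac{j_1}{T}}$, $(z-x_1)^{k+\frac{j_1}{T}}$ are distinct and related by controlled phases $e^{\pm(k+\frac{j_1}{T})\pi\iu}$, each of which must be tracked precisely against the explicit $e^{\frac{j_1}{T}\pi\iu}$ in \eqref{eq:R-L-*}. Moreover, the standard delta-cutting identity applies cleanly only to integer-power substitutions, so the fractional pieces $\frac{j_1}{T}$ and $\frac{j_2}{T}$ must remain attached to the twisted deltas $\bigl(\frac{x_1-z}{x_0}\bigr)^{\frac{j_1}{T}}$ and $\bigl(\frac{z+x_0}{x_1}\bigr)^{\frac{j_2}{T}}$ throughout, and enter the final identification only as built-in features of those twisted deltas.
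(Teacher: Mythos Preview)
The paper does not give its own proof of this proposition; it is quoted from \cite{LS23} (see the sentence immediately preceding the statement). Your sketch follows the standard route for such three-term delta identities and is essentially the argument one would expect; two points, however, are glossed over and would need to be handled for the proof to be complete.

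First, your phase conversion $(z-x_1)^{j_1/T}=e^{-\frac{j_1}{T}\pi\iu}(-z+x_1)^{j_1/T}$ and your intertwining identity $(-z+x_1)^{k+\frac{j_1}{T}}Y_{g}^{R}(a,x_1)\alpha=(x_1-z)^{k+\frac{j_1}{T}}Y_{W}^{*}(a,x_1)\alpha$ are stated only in the form valid for $\mathrm{arg}(z)<\pi$; compare \cref{rmk:arg>=pi}. When $\mathrm{arg}(z)\geq\pi$ each picks up an extra factor $e^{\pm\frac{2\pi\iu j_1}{T}}$, and the two extra factors cancel, so the argument still goes through --- but since the paper applies the proposition at $z=-1$, this case must be spelled out rather than suppressed.

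Second, the last step (``formal identification $x_1=z+x_0$'') is not a purely algebraic delta-cutting. After your reduction the comparison comes down to showing that $x_1^{-1}\delta\!\bigl(\tfrac{z+x_0}{x_1}\bigr)F(x_1)=x_1^{-1}\delta\!\bigl(\tfrac{z+x_0}{x_1}\bigr)F(x_0+z)$ for the power series $F(x_1)=x_1^{l+\frac{j_2}{T}}(x_1-z)^{k+\frac{j_1}{T}}Y_{W}^{*}(a,x_1)\alpha\in W^{*}[[x_1]]$. This requires that the formal substitution $x_1\mapsto x_0+z$ in $F$ actually yields the series $(x_0+z)^{l+\frac{j_2}{T}}x_0^{k+\frac{j_1}{T}}Y_{W}^{*}(a,x_0+z)\alpha$ --- i.e., exactly the passage from \eqref{eq: P(z)-linear functionals-2} to \eqref{eq:P(z)-linear functionals-3}, which the paper cites as proved in \cite{LS23}. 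You should invoke that explicitly rather than absorb it into the delta function.
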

Following \cite[Definition 3.4]{Li22}, we give a similar one:
\begin{definition}\label{def:Omega_{m, n}}
    Let $(M, Y_{M})$ be a weak $g_1\otimes g_2$-twisted $V\otimes V$-module. For $a\in V$, set 
    \begin{equation}
        Y^1(a, x)=Y_{M}(a\otimes \vac, x), \qquad Y^2(a, x)=Y_{M}(\vac\otimes a, x).
    \end{equation}
    Then $(M, Y^1)$ is a weak $g_1$-twisted $V$-module and $(M, Y^2)$ is a weak $g_2$-twisted $V$-module. For $m, n\in \frac{1}{T}\N$, let $\Omega_{m, n}(M)$ be the set of all vectors $v\in M$ such that
    \begin{equation}
        x^{\wt a+\lambda(m, j_1)}Y^1(a, x)v, \quad x^{\wt a+\lambda(n, j_2)}Y^2(a, x) \in M[[x]]
    \end{equation}
    for homogeneous $a$ in $V^{(j_1, j_2)}$, where $0\leq j_1, j_2<T$.
\end{definition}
By definition,
\begin{equation}
    \Omega_{m, n}(M)=\Omega_{m}(M, Y^1)\cap \Omega_{n}(M, Y^2).
\end{equation}
Since the actions of $Y^1$ and $Y^2$ commute, by \cref{thm:twistedZhu}$, \Omega_{m, n}(M)$ is an $A_{g_1, m}(V)$-module and $A_{g_2, n}(V)$-module. Furthermore, it is an $A_{g_1, m}(V)\otimes A_{g_2, n}(V)$-module. 

Let $W$ be a weak $g$-twisted module. For homogeneous $a\in V^{(j_1, j_2)}$, and $v\in W$, define
\begin{equation}
    a\circ_{g_2^{-1}, g_1, m}^{n} v=\Res_{x}\frac{(1+x)^{\lambda(m, j_1)}}{x^{\lambda(m, j_1)+\lambda(n, j_2)+2}}Y_{W}\left((1+x)^{L_{(0)}}a, x\right)v.
\end{equation}
Following \cite{Li22}, let $O^{\dag}_{g_2^{-1}, g_1, n, m}(W)$ be the space spanned by all $a\circ_{g_2^{-1}, g_1, m}^{n} v$ for $a\in V$ and $v\in W$. Furthermore, let 
\begin{equation}
    O'_{g_2^{-1}, g_1, n, m}(W)=O^{\dag}_{g_2^{-1}, g_1, n, m}(W)+(L_{(-1)}+L_{(0)}+m-n)(W).
\end{equation} 
We have the following result similar to \cite[Lemma 2.1.2]{Z96} and \cite[Lemma 3.3]{DJ08b}.
\begin{proposition}\label{prop:k-s-O}
    For homogeneous $a\in V^{(j_1, j_2)}$, and $k, s\in \N$ such that $k\geq s$,
    \begin{equation}
        \Res_{x}\frac{(1+x)^{\wt a+\lambda(m, j_1)+s}}{x^{\lambda(m, j_1)+\lambda(n, j_2)+2+k}}Y_{W}(a, x)v\in O^{\dag}_{g_2^{-1}, g_1, n, m}(W).
    \end{equation}
    In particular, for $p\in \frac{1}{T}\N$, we have
    \begin{equation}
        O^{\dag}_{g_2^{-1}, g_1, n, p}(W)\subseteq O^{\dag}_{g_2^{-1}, g_1, n, m}(W) \quad \text{if}\ p\geq m,
    \end{equation}
    \begin{equation}
        O^{\dag}_{g_2^{-1}, g_1, p, m}(W)\subseteq O^{\dag}_{g_2^{-1}, g_1, n, m}(W) \quad \text{if}\ p\geq n.
    \end{equation}
\end{proposition}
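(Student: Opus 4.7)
The plan is to prove the residue identity by strong induction on $k\geq 0$, with the base case $k=s=0$ being exactly the definition of $a\circ^{n}_{g_2^{-1}, g_1, m} v$ (since $(1+x)^{L_{(0)}}a=(1+x)^{\wt a}a$ for homogeneous $a$). The primary algebraic input is the fact that the residue of a formal derivative vanishes: applying $\Res_x\odv{x}$ to $\frac{(1+x)^p}{x^q}Y_W(a, x)v$ and using $Y_W(L_{(-1)}a, x)=\odv{x}Y_W(a, x)$, I obtain the recursion
\begin{equation*}
q\cdot\Res_x\frac{(1+x)^p}{x^{q+1}}Y_W(a, x)v = p\cdot\Res_x\frac{(1+x)^{p-1}}{x^q}Y_W(a, x)v + \Res_x\frac{(1+x)^p}{x^q}Y_W(L_{(-1)}a, x)v.
\end{equation*}

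Abbreviate the residue in the statement by $R(a, s, k)$, and specialize to $p=\wt a+\lambda(m, j_1)+s$, $q=\lambda(m, j_1)+\lambda(n, j_2)+1+k$. Since $L_{(-1)}a\in V^{(j_1, j_2)}$ with $\wt(L_{(-1)}a)=\wt a+1$, the recursion reads
\begin{equation*}
q\cdot R(a, s, k) = p\cdot R(a, s-1, k-1) + R(L_{(-1)}a, s-1, k-1).
\end{equation*}
At level $k\geq 1$, I first handle $1\leq s\leq k$: both right-hand terms carry parameters $(s-1, k-1)$ satisfying $s-1\leq k-1$, hence lie in $O^\dag_{g_2^{-1}, g_1, n, m}(W)$ by the induction hypothesis. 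Assuming $q\neq 0$, division yields $R(a, s, k)\in O^\dag_{g_2^{-1}, g_1, n, m}(W)$. The remaining case $s=0$ is handled via the Pascal identity $(1+x)^{p+1}/x^{q+1}=(1+x)^p/x^{q+1}+(1+x)^p/x^q$, which gives $R(a, 0, k)=R(a, 1, k)-R(a, 0, k-1)$; the first term is covered by the case just established, the second by the induction hypothesis.

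The main technical point is to verify $q=\lambda(m, j_1)+\lambda(n, j_2)+1+k\neq 0$ whenever $k\geq 1$. From \labelcref{eq:lambda-number} one reads off $\lambda(x, 0)=\lfloor x\rfloor\geq 0$, while for $1\leq r<T$ the nonnegative contribution $r/T\geq\frac{1}{T}$ together with $\lfloor x\rfloor,\delta_{\tilde x}(r)\in\Z_{\geq 0}$ gives $\lambda(x, r)\geq -1+\frac{1}{T}$. A four-case analysis on whether each $j_i$ is zero or positive then yields $q>0$ in every subcase under $k\geq 1$ (the tightest bound being $q\geq 2/T$ when $j_1, j_2\geq 1$ and $k=1$). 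This is the only step where the explicit structure of $\lambda$ really enters.

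The two inclusion corollaries drop out of the main identity. The function $\lambda(\cdot, r)$ is non-decreasing in its first argument, which one checks case by case according to whether incrementing $x$ by $\frac{1}{T}$ crosses an integer. Hence, for $p\geq m$, the integer $s:=\lambda(p, j_1)-\lambda(m, j_1)\geq 0$, and every generator $a\circ^{n}_{g_2^{-1}, g_1, p} v$ of $O^\dag_{g_2^{-1}, g_1, n, p}(W)$ coincides with $R(a, s, s)$, which lies in $O^\dag_{g_2^{-1}, g_1, n, m}(W)$ by the main identity with $k=s$. The second inclusion, for $p\geq n$, is obtained identically by taking $s=0$ and $k:=\lambda(p, j_2)-\lambda(n, j_2)\geq 0$.
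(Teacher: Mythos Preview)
Your proof is correct and is precisely the standard Zhu argument that the paper invokes by citation: the derivative identity $\Res_x\odv{x}(\cdot)=0$ combined with $Y_W(L_{(-1)}a,x)=\odv{x}Y_W(a,x)$ yields the recursion, and induction on $k$ together with the Pascal splitting handles the $s=0$ boundary. The only extra work over the untwisted case is your verification that $q=\lambda(m,j_1)+\lambda(n,j_2)+1+k\neq 0$ for $k\geq 1$ and that $\lambda(\cdot,r)$ is non-decreasing so that the differences $\lambda(p,j_1)-\lambda(m,j_1)$ and $\lambda(p,j_2)-\lambda(n,j_2)$ are non-negative integers; both checks are routine from \labelcref{eq:lambda-number} and you have carried them out correctly.
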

\begin{proof}
    The proof follows from Zhu's argument in \cite{Z96}.
\end{proof}
Set
\begin{equation}
    A^{\dag}_{g_2^{-1}, g_1, n, m}(W)=W/O^{\dag}_{g_2^{-1}, g_1, n, m}(W),
\end{equation}
\begin{equation}
    A^{\diamond}_{g_2^{-1}, g_1, n, m}(W)=W/O^{'}_{g_2^{-1}, g_1, n, m}(W).
\end{equation}

\section{\texorpdfstring{$A^{\dag}_{g_2^{-1}, g_1, n, m}(W)$}{Ag2-1, g1, n, m(W)} and \texorpdfstring{$\Omega_{m, n}(\mathfrak{D}_{g_1, g_2}^{(-1)}(W))$}{m, n, Dg1, g2-1(W)}}
In this section, we study the spaces $A^{\dag}_{g_2^{-1}, g_1, n, m}(W)$ and $\Omega_{m, n}(\mathfrak{D}_{g_1, g_2}^{(-1)}(W))$. By making an identification between $(A^{\dag}_{g_2^{-1}, g_1, n, m}(W))^{*}$ and $\Omega_{m, n}(\mathfrak{D}_{g_1, g_2}^{(-1)}(W))$, we prove $A^{\dag}_{g_2^{-1}, g_1, n, m}(W)$ has an $A_{g_2^{-1}, n}(V)$-$A_{g_1, m}(V)$-bimodule structure, which extends the result of \cite{Li22}.

Note that $\mathfrak{D}_{g_1, g_2}^{(-1)}(W)$ is a weak $g_1\otimes g_2$-twisted $V\otimes V$-module. We give our first main result:
\begin{proposition}\label{prop:A-Omega-D}
    Let $W$ be a weak $g$-twisted module, $f\in W^{*}$ and $m, n\in \frac{1}{T}\N$. Then $f\in (A^{\dag}_{g_2^{-1}, g_1, n, m}(W))^{*}$ if and only if 
    \begin{equation}\label{eq:characterization-of-dual}
        x^{\wt a+\lambda(n, j_2)}(x+1)^{\wt a+\lambda(m, j_1)}Y^{*}_{W}(a, x)f\in W^{*}[[x]]
    \end{equation}
    holds for every homogeneous $a\in V^{(j_1, j_2)}$. Furthermore, we have 
    \begin{equation}\label{eq:A-D}
        (A^{\dag}_{g_2^{-1}, g_1, n, m}(W))^{*}=\Omega_{m, n}(\mathfrak{D}_{g_1, g_2}^{(-1)}(W)).
    \end{equation}
\end{proposition}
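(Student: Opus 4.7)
My plan is to prove the biconditional $f \in (A^{\dag}_{g_2^{-1}, g_1, n, m}(W))^{*} \iff (3.1)$ by converting both sides into a common truncation statement on formal series, and then to identify this space with $\Omega_{m, n}(\mathfrak{D}^{(-1)}_{g_1, g_2}(W))$ by reading off (3.1) as simultaneously encoding $f \in \mathfrak{D}^{(-1)}_{g_1, g_2}(W)$ together with the two $\Omega$-truncations defining $\Omega_{m, n}$.

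First I would unfold $f \in (A^{\dag}_{g_2^{-1}, g_1, n, m}(W))^{*}$: this asserts that, for every homogeneous $a \in V^{(j_1, j_2)}$ and every $v \in W$,
\begin{equation*}
\Res_x \frac{(1+x)^{\wt a + \lambda(m, j_1)}}{x^{\lambda(m, j_1) + \lambda(n, j_2) + 2}} \langle f, Y_W(a, x) v\rangle = 0.
\end{equation*}
By \cref{prop:k-s-O}, the analogous residues with shift parameters $k \geq s \geq 0$ all vanish as well; taking $s = 0$ and varying $k \geq 0$, this system is equivalent to the truncation $(\ast)$: the series $(1+x)^{\wt a + \lambda(m, j_1)} \langle f, Y_W(a, x) v\rangle$ has zero $x^{\xi}$-coefficient for every $\xi > \lambda(m, j_1) + \lambda(n, j_2)$. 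On the other hand, using $\langle Y^{*}_{W}(a, x) f, v\rangle = (-1)^{\wt a} x^{-2\wt a} \langle f, Y_W(e^{xL_{(1)}} a, x^{-1}) v\rangle$ for homogeneous $a$, substituting $y = x^{-1}$, and expanding $e^{y^{-1}L_{(1)}} a = \sum_{j \geq 0} \frac{y^{-j}}{j!} L_{(1)}^{j} a$ (a finite sum since $L_{(1)}^{j} a = 0$ for $j > \wt a$, and $L_{(1)}$ preserves $V^{(j_1, j_2)}$), (3.1) paired with $v$ becomes the truncation $(\ast\ast)$: the series
\begin{equation*}
(1+y)^{\wt a + \lambda(m, j_1)} \sum_{j \geq 0} \frac{y^{-j}}{j!} \langle f, Y_W(L_{(1)}^{j} a, y) v\rangle
\end{equation*}
has zero $y^{\xi}$-coefficient for every $\xi > \lambda(m, j_1) + \lambda(n, j_2)$.

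I would then match $(\ast)$ and $(\ast\ast)$ term by term. For $(\ast) \Rightarrow (\ast\ast)$: rewriting $(1+y)^{\wt a + \lambda(m, j_1)} y^{-j} = y^{-j}(1+y)^{j} (1+y)^{\wt a - j + \lambda(m, j_1)}$ and applying $(\ast)$ to $L_{(1)}^{j} a$ (of weight $\wt a - j$, same $(j_1, j_2)$-grading), multiplication by the Laurent polynomial $y^{-j}(1+y)^{j}$ preserves the upper-truncation bound on each summand, so the sum satisfies $(\ast\ast)$. For the converse, I would induct downward on $j$, beginning from the largest $j$ with $L_{(1)}^{j} a \neq 0$ (finitely many since $L_{(1)}$ is locally nilpotent on $V$): the base case has only the $j = 0$ summand in $(\ast\ast)$, directly yielding $(\ast)$; at each successive step, I subtract the $(\ast)$-truncations for $L_{(1)}^{j+1} a, L_{(1)}^{j+2} a, \ldots$ (known by induction) from $(\ast\ast)$ applied to $L_{(1)}^{j} a$, isolating the required bound on $(1+y)^{\wt a - j + \lambda(m, j_1)} \langle f, Y_W(L_{(1)}^{j} a, y) v\rangle$.

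Finally, for the identification with $\Omega_{m, n}(\mathfrak{D}^{(-1)}_{g_1, g_2}(W))$: the condition (3.1) first forces $f \in \mathfrak{D}^{(-1)}_{g_1, g_2}(W)$ (taking $k = \wt a - 1 + \lfloor m \rfloor + \delta_{\tilde m}(j_1)$ in the defining existential condition suffices). The $(x+1)^{\wt a + \lambda(m, j_1)}$-factor in (3.1), converted via \labelcref{eq:Y-R} at $z = -1$, then yields $x^{\wt a + \lambda(n, j_2)} Y^{R}_{g}(a, x) f \in \mathfrak{D}^{(-1)}_{g_1, g_2}(W)[[x]]$, which is the $\Omega_n$-condition for the $g_2$-twisted module $(\mathfrak{D}^{(-1)}_{g_1, g_2}(W), Y^{R}_{g})$; the $x^{\wt a + \lambda(n, j_2)}$-factor, rewritten via \labelcref{eq:Y-L}, yields $x^{\wt a + \lambda(m, j_1)} Y^{L}_{g}(a, x) f \in \mathfrak{D}^{(-1)}_{g_1, g_2}(W)[[x]]$, the $\Omega_m$-condition for the $g_1$-twisted module $(\mathfrak{D}^{(-1)}_{g_1, g_2}(W), Y^{L}_{g})$. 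By the definition of $\Omega_{m, n}$ for a $V \otimes V$-module, these two conditions together are exactly $f \in \Omega_{m, n}(\mathfrak{D}^{(-1)}_{g_1, g_2}(W))$. The main obstacle I anticipate is the downward induction establishing $(\ast\ast) \Rightarrow (\ast)$, where careful bookkeeping of binomial-coefficient shifts from $y^{-j}(1+y)^{j}$ is needed so that each inductive subtraction cleanly isolates the current $j$-th contribution.
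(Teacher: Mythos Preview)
Your argument for the biconditional $f\in (A^{\dag}_{g_2^{-1},g_1,n,m}(W))^{*}\Longleftrightarrow \labelcref{eq:characterization-of-dual}$ is correct. The paper's route differs only in that the direction $(\ast\ast)\Rightarrow(\ast)$ is handled without induction: invoking the double-contragredient identity $\langle f, Y_W(a,x)u\rangle=\langle Y^{*}_{W}(e^{xL_{(1)}}(-x^{-2})^{L_{(0)}}a, x^{-1})f, u\rangle$ (cf.\ \cite[Proposition~5.3.1]{FHL93}) and substituting $x\mapsto x^{-1}$ writes $\langle f, a\circ^{n}_{g_2^{-1},g_1,m}v\rangle$ directly as a finite sum over $i\geq 0$ of residues of $x^{\wt(L_{(1)}^{i}a)+\lambda(n,j_2)}(x+1)^{\wt(L_{(1)}^{i}a)+\lambda(m,j_1)+i}\langle Y^{*}_{W}(L_{(1)}^{i}a,x)f,u\rangle$, each of which vanishes by \labelcref{eq:characterization-of-dual} applied to $L_{(1)}^{i}a$.

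There is, however, a genuine gap in your treatment of \labelcref{eq:A-D}: you establish only the inclusion $(A^{\dag}_{g_2^{-1},g_1,n,m}(W))^{*}\subseteq\Omega_{m,n}(\mathfrak{D}^{(-1)}_{g_1,g_2}(W))$. The converse is not automatic from the definitions \labelcref{eq:Y-R}, \labelcref{eq:Y-L}. Knowing only $x^{\wt a+\lambda(n,j_2)}Y^{R}_{g}(a,x)f\in W^{*}[[x]]$, the identity of \cref{rmk:arg>=pi} converts $(1+x)^{k+j_1/T}Y^{R}_{g}$ back to $(x+1)^{k+j_1/T}Y^{*}_{W}$ only for those $k$ for which the latter already lies in $W^{*}((x))$; taking $k+\tfrac{j_1}{T}=\wt a+\lambda(m,j_1)$ here presupposes exactly what you are trying to prove. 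The paper supplies the missing step via \cref{prop:R-L-*}: specializing \labelcref{eq:R-L-*} to $z=-1$ and applying $\Res_{x_0}x_0^{\wt a+\lambda(m,j_1)}x_1^{\wt a+\lambda(n,j_2)}$ expresses $x_1^{\wt a+\lambda(n,j_2)}(x_1+1)^{\wt a+\lambda(m,j_1)}Y^{*}_{W}(a,x_1)f$ as the sum of a $Y^{R}_{g}$-term lying in $W^{*}[[x_1]]$ by the $\Omega_n$-condition and a $Y^{L}_{g}$-term lying in $W^{*}[[x_1]]$ by the $\Omega_m$-condition. You need this three-term identity (or an equivalent device) to close the argument.
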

\begin{proof}
     Suppose $f\in (A^{\dag}_{g_2^{-1}, g_1, n, m}(W))^{*}$, then $f|_{O^{\dag}_{g_2^{-1}, g_1, n, m}(W)}=0$. Thus for any $k\in \N$, homogeneous $a\in V^{(j_1, j_2)}$, and $u\in W$, by \cref{prop:k-s-O}, we have
     \begin{align*}
         \MoveEqLeft
         \Res_{x}x^{\wt a+\lambda(n, j_2)+k}(x+1)^{\wt a+\lambda(m, j_1)}\<Y^{*}_{W}(a, x)f, u\>\\
         &=\Res_{x}x^{\wt a+\lambda(n, j_2)+k}(x+1)^{\wt a+\lambda(m, j_1)}\<f, Y_{W}(e^{xL_{(1)}}(-x^{-2})^{L_{(0)}}a, x^{-1})u\>\\
         &=(-1)^{\wt a}\Res_{x}x^{\lambda(n, j_2)+k-\wt a}(x+1)^{\wt a+\lambda(m, j_1)}\<f, Y_{W}(e^{xL_{(1)}}a, x^{-1})u\>\\
         &=(-1)^{\wt a}\Res_{x}\frac{(1+x)^{\wt a+\lambda(m, j_1)}}{x^{\lambda(m, j_1)+\lambda(n, j_2)+2+k}}\<f, Y_{W}(e^{x^{-1}L_{(1)}}a, x)u\>\\
         &=(-1)^{\wt a}\sum_{i\geq 0}\Res_{x}\frac{1}{i!}\frac{(1+x)^{\wt (L_{(1)}^{i}a)+\lambda(m, j_1)+i}}{x^{\lambda(m, j_1)+\lambda(n, j_2)+2+i+k}}\<f, Y_{W}(L_{(1)}^{i}a, x)u\>\\
         &=0.
     \end{align*}
     Therefore \labelcref{eq:characterization-of-dual} holds.

     Let $f\in W^{*}$. Suppose \labelcref{eq:characterization-of-dual} holds for for any homogeneous $a\in V^{(j_1, j_2)}$. Then for any homogeneous $a\in V^{(j_1, j_2)}$ and $u\in W$, by \cite[Proposition 5.3.1]{FHL93}, we have
     \begin{align*}
         \MoveEqLeft
         \Res_{x}\frac{(1+x)^{\wt a+\lambda(m, j_1)}}{x^{\lambda(m, j_1)+\lambda(n, j_2)+2}}\<f, Y_{W}(a, x)u\>\\
         &=\Res_{x}\frac{(1+x)^{\wt a+\lambda(m, j_1)}}{x^{\lambda(m, j_1)+\lambda(n, j_2)+2}}\<Y_{W}^{*}(e^{xL_{(1)}}(-x^{-2})^{L_{(0)}}a, x^{-1})f, u\>\\
         &=\Res_{x}(-1)^{\wt a}x^{\wt a+\lambda(n, j_2)}(x+1)^{\wt a+\lambda(m, j_1)}\<Y_{W}^{*}(e^{x^{-1}L_{(1)}}a, x)f, u\>\\
         &=\Res_{x}(-1)^{\wt a}\sum_{i\geq 0}\frac{1}{i!}x^{\wt (L_{(1)}^{i}a)+\lambda(n, j_2)}(x+1)^{\wt (L_{(1)}^{i}a)+\lambda(m, j_1)+i}\\
         &\hspace{1.3cm} \times \<Y_{W}^{*}(L_{(1)}^{i}a, x)f, u\>\\
         &=0.
     \end{align*}
    This implies that $f\in (A^{\dag}_{g_2^{-1}, g_1, n, m}(W))^{*}$, hence proves the first assertion.

    Suppose $f\in (A^{\dag}_{g_2^{-1}, g_1, n, m}(W))^{*}$. By the previous proof, \labelcref{eq:characterization-of-dual} holds for every homogeneous $a\in V^{(j_1, j_2)}$, hence $f\in \mathfrak{D}_{g_1, g_2}^{(-1)}(W)$. By the definition of $Y_{g}^{R}(-, x)$ and \cref{rmk:arg>=pi}, we have 
    \begin{align*}
        \MoveEqLeft
        x^{\wt a+\lambda(n, j_2)}(x+1)^{\wt a+\lambda(m, j_1)}Y^{*}_{W}(a, x)f\\
        &=x^{\wt a+\lambda(n, j_2)}(1+x)^{\wt a+\lambda(m, j_1)}(Y^{R}_{g}\circ g_1)(a, x)f\in \mathfrak{D}_{g_1, g_2}^{(-1)}(W)[[x]],
    \end{align*}
    which implies 
    \begin{equation}\label{eq:Omega_n}
        x^{\wt a+\lambda(n, j_2)}Y^{R}_{g}(a, x)f\in \mathfrak{D}_{g_1, g_2}^{(-1)}(W)[[x]].
    \end{equation}
    On the other hand, \labelcref{eq:characterization-of-dual} implies that
    \begin{equation}
        (x-1)^{\wt a+\lambda(n, j_2)}x^{\wt a+\lambda(m, j_1)}Y^{*}_{W}(a, x-1)f\in W^{*}[[x]]
    \end{equation}
    holds for every homogeneous $a\in V^{(j_1, j_2)}$. By the definition of $Y_{g}^{L}(-, x)$, we have
    \begin{align*}
        \MoveEqLeft
        (x-1)^{\wt a+\lambda(n, j_2)}x^{\wt a+\lambda(m, j_1)}Y^{*}_{W}(a, x-1)f\\
        &=(-1+x)^{\wt a+\lambda(n, j_2)}x^{\wt a+\lambda(m, j_1)}Y^{L}_{g}(a, x)f\in \mathfrak{D}_{g_1, g_2}^{(-1)}(W)[[x]],
    \end{align*}
    which implies
    \begin{equation}\label{eq:Omega_m}
        x^{\wt a+\lambda(m, j_1)}Y^{L}_{g}(a, x)f\in \mathfrak{D}_{g_1, g_2}^{(-1)}(W)[[x]].
    \end{equation}
    Letting $M=W, Y^1=Y_{g}^{L}$ and $Y^2=Y_{g}^{R}$ in \cref{def:Omega_{m, n}}, we see that \labelcref{eq:Omega_n} and \labelcref{eq:Omega_m} means exactly that $f\in \Omega_{m, n}(\mathfrak{D}_{g_1, g_2}^{(-1)}(W))$.

    Suppose $f\in \Omega_{m, n}(\mathfrak{D}_{g_1, g_2}^{(-1)}(W))$. By letting $z=-1$ in \labelcref{eq:R-L-*}, we get
    \begin{align*}
        \MoveEqLeft
        x_0^{-1}\delta(\frac{x_1+1}{x_0})(\frac{x_1+1}{x_0})^{\frac{j_1}{T}}Y_{W}^{*}(a,x_1)\alpha\\
        &-e^{\frac{j_1}{T}2\pi \iu}x_0^{-1}\delta(\frac{1+x_1}{x_0})(\frac{1+x_1}{x_0})^{\frac{j_1}{T}}Y_{g}^{R}(a,x_1)\alpha\\
        &=x_1^{-1}\delta(\frac{-1+x_0}{x_1})(\frac{-1+x_0}{x_1})^{\frac{j_2}{T}}Y_{g}^{L}(a, x_0)\alpha. 
    \end{align*}
    Applying $\Res_{x_0}x_0^{\wt a+\lambda(m, j_1)}x_1^{\wt a+\lambda(n, j_2)}$ to the above equation, we get
    \begin{align*}
        \MoveEqLeft
        x_1^{\wt a+\lambda(n, j_2)}(x_1+1)^{\wt a+\lambda(m, j_1)}Y_{W}^{*}(a, x_1)\alpha\\
        &=e^{\frac{j_1}{T}2\pi \iu}x_1^{\wt a+\lambda(n, j_2)}(1+x_1)^{\wt a+\lambda(m, j_1)}Y_{g}^{R}(a, x_1)\alpha\\
        &+\Res_{x_0}x_0^{\wt a+\lambda(m, j_1)}x_1^{\wt a+\lambda(n, j_2)}x_1^{-1}\delta(\frac{-1+x_0}{x_1})(\frac{-1+x_0}{x_1})^{\frac{j_2}{T}}Y_{g}^{L}(a, x_0)\alpha.
    \end{align*}
    $f\in \Omega_{m, n}(\mathfrak{D}_{g_1, g_2}^{(-1)}(W))$ implies that the right-hand side lies in $W^{*}[[x_1]]$. This completes the proof.
\end{proof}
\cref{thm:twisted-regular-representation} together with \cref{prop:A-Omega-D} implies that $(A^{\dag}_{g_2^{-1}, g_1, n, m}(W))^{*}$ has an $A_{g_1, m}(V)\otimes A_{g_2, n}(V)$-module structure. Next, we will explore the related bimodule structure on $A^{\dag}_{g_2^{-1}, g_1, n, m}(W)$ through the natural pairing between $A^{\dag}_{g_2^{-1}, g_1, n, m}(W)$ and $(A^{\dag}_{g_2^{-1}, g_1, n, m}(W))^{*}$. 

The following result is a consequence of \cref{prop:deformed-A_{g, n}(V)-module}.
\begin{corollary}\label{coro:deformed-action}
    Let $(M, Y_{M})$ be a weak $\sigma$-twisted module for some $\sigma\in \Aut(V)$ satisfying $\sigma^{T}=1$ and $z_0\in \C$. Define a bilinear map from $V\times M$ to $M$ by 
    \begin{equation}
        a\bullet_{(z_0)}u=\Res_{x}x^{\wt a-1}(1-z_0x)^{\wt a-1}Y_{M}(e^{-z_0(1-z_0x)^{-1}L_{(1)}}a, x)u
    \end{equation}
    for $a\in V$ homogeneous and $u\in M$. This bilinear map induces an $A_{\sigma, n}(V)$-module structure on $\Omega_{n}(M)$ for every $n\in \frac{1}{T}\N$. 
\end{corollary}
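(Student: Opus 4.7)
The plan is to obtain the $A_{\sigma, n}(V)$-module structure on $\Omega_{n}(M)$ by transferring it from $\Omega_{n}(M, Y_{M}^{[z_0]})$, where the standard twisted Zhu theory already applies, and then to unpack the action explicitly via the residue identity \labelcref{eq:deformed-residue}.

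First, by \cref{prop:L(-1)-resulted-module}, the pair $(M, Y_{M}^{[z_0]})$ is again a weak $\sigma$-twisted $V$-module (the construction of $Y_{M}^{[z_0]}$ only requires $L_{(1)}$ to act locally nilpotently on each element of $V$, which holds by the VOA grading axiom). Applying \cref{thm:twistedZhu} to $(M, Y_{M}^{[z_0]})$ yields an $A_{\sigma,n}(V)$-module structure on $\Omega_{n}(M, Y_{M}^{[z_0]})$ in which the class of a homogeneous $a \in V$ acts as the zero mode $\Res_{x} x^{\wt a - 1} Y_{M}^{[z_0]}(a, x)$.

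Next, by the first assertion of \cref{prop:deformed-A_{g, n}(V)-module}, $\Omega_{n}(M, Y_{M}^{[z_0]}) = \Omega_{n}(M, Y_{M}) = \Omega_{n}(M)$, so this module structure lives on $\Omega_{n}(M)$. To identify the action with $a \bullet_{(z_0)} u$, I substitute $m = \wt a - 1$ into the residue identity \labelcref{eq:deformed-residue}; the exponent on $(1 - z_0 x)$ simplifies to $2\wt a - m - 2 = \wt a - 1$, which reproduces exactly the formula defining $\bullet_{(z_0)}$. Extending linearly in $a$ gives the stated bimodule action.

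The argument is essentially a direct assembly of prior results, so no substantial obstacle is expected. The only point deserving a moment's care is the implicit observation that $e^{-z_0(1-z_0x)^{-1} L_{(1)}} a$ reduces to a finite sum for each homogeneous $a \in V$ (since $L_{(1)}$ lowers conformal weight by one), which ensures that $a \bullet_{(z_0)} u$ is well-defined on homogeneous $a$ and extends unambiguously by linearity.
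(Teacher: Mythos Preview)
Your argument is correct and matches the paper's one-line proof: by \labelcref{eq:deformed-residue} with $m=\wt a-1$ one has $a\bullet_{(z_0)}u=\Res_{x}x^{\wt a-1}Y_{M}^{[z_0]}(a,x)u$, so $\bullet_{(z_0)}$ is precisely the zero-mode action of the weak $\sigma$-twisted module $(M,Y_M^{[z_0]})$ restricted to $\Omega_n(M,Y_M^{[z_0]})=\Omega_n(M)$. The only nitpick is that \cref{thm:twistedZhu} as stated assumes $M$ admissible; for a weak module the input you actually want is \cref{prop:o-vanishing}.
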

\begin{proof}
     It is simply because $a\bullet_{(z_0)}u=\Res_{x}x^{\wt a-1}Y_{M}^{[z_0]}(a, x)u$ for homogeneous $a\in V$ and $u\in M$.
\end{proof}

Let $W$ be a weak $g$-twisted module and $z_0\in \C$. We apply \cref{coro:deformed-action} to $\mathfrak{D}_{g_1, g_2}^{(-1)}(W)$, which is a weak $g_1\otimes g_2$-twisted module, hence a weak $g_1$-twisted and $g_2$-twisted $V$-module simultaneously. The $g_1$-twisted action is given by $Y_{g}^{L}(-, x)$ and $g_2$-twisted action is given by $Y_{g}^{R}(-, x)$. For later use, we let
\begin{equation}
    a\bullet_{(z_0)}^{L}f=\Res_{x}x^{\wt a-1}(Y_{g}^{L})^{[z_0]}(a, x)f
\end{equation}
and 
\begin{equation}
    a\bullet_{(z_0)}^{R\circ g_1}f=\Res_{x}x^{\wt a-1}(Y_{g}^{R}\circ g_1)^{[z_0]}(a, x)f
\end{equation}
for $a\in V$ homogeneous and $f\in \mathfrak{D}_{g_1, g_2}^{(-1)}(W)$. Note that $\left(\mathfrak{D}_{g_1, g_2}^{(-1)}(W), Y_{g}^{R}\circ g_1\right)$ is also a weak $g_2$-twisted module and
\begin{equation}
    \Omega_{n}\left(\mathfrak{D}_{g_1, g_2}^{(-1)}(W), Y_{g}^{R}\right)=\Omega_{n}\left(\mathfrak{D}_{g_1, g_2}^{(-1)}(W), Y_{g}^{R}\circ g_1\right)
\end{equation}
for any $n\in \frac{1}{T}\N$.
Thus "$\bullet_{(z_0)}^{L}$" (resp. "$\bullet_{(z_0)}^{R\circ g_1}$") gives an $A_{g_1, m}(V)$-module (resp. $A_{g_2, n}(V)$-module) action on $(A^{\dag}_{g_2^{-1}, g_1, n, m}(W))^{*}$.

\begin{definition}
    Let $W$ be a weak $g$-twisted $V$-module and $m, p, n\in \frac{1}{T}\N$. Define a bilinear map from $W\times V$ to $W$ by 
    \begin{equation}\label{eq:right-action}
        u{\dast}_{g_1, m}^{n}a=\Res_{x}\sum_{i=0}^{\lf m \rf}\binom{\lambda(n, j_2)+i}{i}(-1)^{-\lambda(n, j_2)}\frac{(1+x)^{i-1}}{x^{\lambda(n, j_2)+i+1}}Y_{W}((1+x)^{L_{(0)}}a, x)u,
    \end{equation}
    and a bilinear map from $V\times W$ to $W$ by 
    \begin{equation}\label{eq:general-left-action}
        \begin{aligned}
            &a{\uast}_{g_2^{-1}, m, p}^{n}u=\Res_{x}\sum_{i=0}^{\lf p \rf}\binom{\lambda(m, j_1)+n-p+i}{i}(-1)^{i}\frac{(1+x)^{\lambda(m, j_1)}}{x^{\lambda(m, j_1)+n-p+i+1}}\\
            &\hspace{2cm} \times Y_{W}\left((1+x)^{L_{(0)}}a, x\right)u,
        \end{aligned}
    \end{equation}
    for $a\in V^{(j_1, j_2)}$ and $u\in W$.
    In particular, if $p=n$, denote $a{\uast}_{g_2^{-1}, m, p}^{n}u$ by $a{\uast}_{g_2^{-1}, m}^{n}u$, then
    \begin{equation}\label{eq:left-action}
        a{\uast}_{g_2^{-1}, m}^{n}u=\Res_{x}\sum_{i=0}^{\lf n \rf}\binom{\lambda(m, j_1)+i}{i}(-1)^{i}\frac{(1+x)^{\lambda(m, j_1)}}{x^{\lambda(m, j_1)+i+1}}Y_{W}((1+x)^{L_{(0)}}a, x)u.
    \end{equation}
\end{definition}

\begin{remark}
    Since $W$ is a weak $g$-twisted module, by the definition of twisted modules, we immediately see that $u{\dast}_{g_1, m}^{n}a=0$ if $j_1\neq 0$, and $a{\uast}_{g_2^{-1}, m, p}^{n}u=0$ if $j_2\not\equiv \Tilde{n}-\Tilde{p}\pmod{T}$.  
\end{remark}

\begin{proposition}\label{prop:pairing-L}
    Let $W$ be a weak $g$-twisted $V$-module and $m, n\in \frac{1}{T}\N$. Then, for $a\in V$, $u\in W$, and $f\in (A^{\dag}_{g_2^{-1}, g_1, n, m}(W))^{*}$,   
    \begin{equation}\label{eq:pairing-L}
        \<a\bullet_{(1)}^{L}f, u\>=\<f, u{\dast}_{g_1, m}^{n}a\>.
    \end{equation}
\end{proposition}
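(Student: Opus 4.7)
The plan is to compute both sides of the identity as formal residues of pairings with $Y_W$ and $Y_W^*$, and then match them. Starting from the left, by the definition of $\bullet_{(1)}^L$ and the deformed-residue identity \labelcref{eq:deformed-residue} applied to the $g_1$-twisted action $(\mathfrak{D}_{g_1,g_2}^{(-1)}(W), Y_g^L)$,
\begin{equation*}
a\bullet_{(1)}^L f = \Res_x\ x^{\wt a-1}(1-x)^{\wt a-1}\, Y_g^L\bigl(e^{-(1-x)^{-1} L_{(1)}}a,\ x\bigr) f.
\end{equation*}
Next, using the defining formula \labelcref{eq:Y-L} of $Y_g^L$ at $z=-1$ together with the characterization of $(A^{\dag}_{g_2^{-1}, g_1, n, m}(W))^{*}$ as $\Omega_{m,n}(\mathfrak{D}_{g_1,g_2}^{(-1)}(W))$ from \cref{prop:A-Omega-D}, the action $Y_g^L(b,x)f$ is, in the appropriate formal expansion, realized through $Y_W^*(b, x-1) f$, so that \labelcref{eq:Y*-action} with $y = x-1$ yields
\begin{equation*}
\langle Y_g^L(b,x) f, u\rangle = \bigl\langle f,\ Y_W\bigl(e^{(x-1)L_{(1)}}(-(x-1)^{-2})^{L_{(0)}} b,\ (x-1)^{-1}\bigr) u\bigr\rangle.
\end{equation*}

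Substituting $b = e^{-(1-x)^{-1}L_{(1)}} a = e^{(x-1)^{-1}L_{(1)}} a$ and applying the $L_{(0)}$-$L_{(1)}$ conjugation formula \labelcref{eq:conjugation-formula-*} with $t = -(x-1)^{-2}$, $s = (x-1)^{-1}$, and $s/t = -(x-1)$, the middle exponential slides through $(-(x-1)^{-2})^{L_{(0)}}$ and cancels the outer exponential, leaving $(-(x-1)^{-2})^{L_{(0)}}$, which on homogeneous $a$ of weight $\wt a$ equals $(-1)^{\wt a}(x-1)^{-2\wt a}$. Packaging the signs by $(1-x)^{\wt a - 1}(-1)^{\wt a}(x-1)^{-2\wt a} = -(x-1)^{-\wt a - 1}$, the left-hand side becomes
\begin{equation*}
\langle a\bullet_{(1)}^L f, u\rangle = -\Res_x\ \frac{x^{\wt a - 1}}{(x-1)^{\wt a + 1}}\ \langle f,\ Y_W(a, (x-1)^{-1}) u\rangle.
\end{equation*}

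To match this with the right-hand side, I would expand the definition \labelcref{eq:right-action} of $u\dast_{g_1,m}^{n} a$, use $(1+x)^{L_{(0)}} a = (1+x)^{\wt a}a$ for homogeneous $a$, and invoke \labelcref{eq:Y*-action} in reverse to re-express $\langle f, Y_W(a, x) u\rangle$ through $Y_W^*$. A formal change of variable bridging the two residues (equivalently, coefficient-matching of two Laurent expansions in $x$) reduces the identity to a rational-function identity whose binomial-coefficient form is exactly the summand in \labelcref{eq:right-action}; the finite upper bound $\lfloor m \rfloor$ on the summation index $i$ matches the automatic truncation enforced by the $\Omega_m$-condition on $f$. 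The main obstacle will be the careful bookkeeping of the expansion convention for $(x-1)^\nu$ when $\nu$ has a fractional part $j_2/T$ coming from the twist, and verifying that the phase $(-1)^{-\lambda(n,j_2)}$ appearing on the right-hand side combines correctly with the signs produced by \labelcref{eq:Y*-action} to yield the sign $-1$ in the displayed left-hand side formula.
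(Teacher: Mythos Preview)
Your overall strategy coincides with the paper's: start from $\Res_x x^{\wt a-1}(Y_g^L)^{[1]}(a,x)f$, unfold it via the deformed-residue identity, then convert $Y_g^L\to Y_W^*\to Y_W$ and match to the definition of $\dast_{g_1,m}^n$. The gap is in the order of the steps and in the displayed identity
\[
\langle Y_g^L(b,x) f, u\rangle \;=\; \bigl\langle f,\ Y_W\bigl(e^{(x-1)L_{(1)}}(-(x-1)^{-2})^{L_{(0)}} b,\ (x-1)^{-1}\bigr) u\bigr\rangle,
\]
which is not an equality of formal series. By \labelcref{eq:Y-L} at $z=-1$ one only has, for homogeneous $b\in V^{(0,j_2)}$,
\[
(-1+x)^{\wt b+\lambda(n,j_2)}\,Y_g^L(b,x)f \;=\; (x-1)^{\wt b+\lambda(n,j_2)}\,Y_W^*(b,x-1)f,
\]
and the two prefactors $(-1+x)^{\nu}$ and $(x-1)^{\nu}$ are genuinely different formal Laurent series, not just off by a phase. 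Consequently your ``compact'' expression $-\Res_x\,x^{\wt a-1}(x-1)^{-\wt a-1}\langle f, Y_W(a,(x-1)^{-1})u\rangle$ has no well-defined meaning until you specify how $(x-1)^{-\wt a-1}$ and the fractional powers inside $Y_W(a,(x-1)^{-1})$ are expanded, and the step ``a formal change of variable bridging the two residues'' is precisely where the whole computation sits, not a bookkeeping afterthought.

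The paper makes the switch legal by first factoring
\[
(-1+x)^{\wt a-1}\;=\;(-1+x)^{-1-\lambda(n,j_2)}\,(-1+x)^{\wt a+\lambda(n,j_2)},
\]
expanding $(-1+x)^{-1-\lambda(n,j_2)}$ as a binomial series $\sum_{i\ge 0}\binom{-1-\lambda(n,j_2)}{i}(-1)^{-\lambda(n,j_2)-i}x^{i}$, and using the $\Omega_m$-condition $x^{\wt a-r+\lfloor m\rfloor}Y_g^L(L_{(1)}^r a,x)f\in W^*[[x]]$ to truncate this sum at $i=\lfloor m\rfloor$ \emph{before} converting to $Y_W^*$. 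Only after the truncation does each surviving term carry the factor $(-1+x)^{\wt a+\lambda(n,j_2)}$ for which the $Y_g^L\to Y_W^*$ identity is literally valid, and the remainder of the calculation is then an honest residue in $x$ followed by the substitution $x\mapsto x^{-1}$. If you redo your argument in this order, your computations will line up with the paper's and the binomial sum with upper limit $\lfloor m\rfloor$ and the sign $(-1)^{-\lambda(n,j_2)}$ in \labelcref{eq:right-action} will drop out directly rather than having to be matched post hoc.
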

\begin{proof}
Without loss of generality, we may assume $a\in V$ is homogeneous. If $a\in V^{(j_1, j_2)}$ with $j_1\neq 0$, then both sides of \labelcref{eq:pairing-L} are 0. Let $a\in V^{(j_1, j_2)}$ with $j_1=0$, $f\in (A^{\dag}_{g_1^{-1}, g_2, n, m}(W))^{*}$, and $u\in W$. For any $r\in \N$, note that by \cref{prop:A-Omega-D} and the facts that $\wt L_{(1)}^r a=\wt a-r$, $\lambda(m, 0)=\lf m \rf$,
\begin{equation}
    x^{\wt a-r+\lf m \rf}Y_{g}^{L}(L_{(1)}^ra, x)f\in \mathfrak{D}_{g_1, g_2}^{(-1)}(W)[[x]].
\end{equation}
Thus, for any $r, i\in \N$ such that $i>\lf m \rf$,
\begin{equation}
    x^{\wt a-r-1+i+r}Y_{g}^{L}(L_{(1)}^ra, x)f\in \mathfrak{D}_{g_1, g_2}^{(-1)}(W)[[x]].
\end{equation}
Note further by \labelcref{eq:Y-L} and \labelcref{eq:characterization-of-dual}, we have
\begin{equation}
    \begin{aligned}
        \MoveEqLeft
        (-1+x)^{\wt a-r+\lambda(n, j_2)}Y_{g}^{L}(L_{(1)}^{r}a, x)f\\
        &=(x-1)^{\wt a-r+\lambda(n, j_2)}Y_{W}^{*}(L_{(1)}^{r}a, x-1)f.
    \end{aligned}
\end{equation}
Applying $\sum_{r=0}^{\infty}\frac{1}{r!}$ to both sides, we get
\begin{equation}
    \begin{aligned}
        \MoveEqLeft
        (-1+x)^{\wt a+\lambda(n, j_2)}Y_{g}^{L}(e^{(-1+x)^{-1}L_{(1)}}a, x)f\\
        &=(x-1)^{\wt a+\lambda(n, j_2)}Y_{W}^{*}(e^{(x-1)^{-1}L_{(1)}}a, x-1)f.
    \end{aligned}
\end{equation}
Therefore, using \labelcref{eq:conjugation-formula-*} and \labelcref{eq:deformed-residue},
\begin{align*}
    \MoveEqLeft
    \Res_{x}x^{\wt a-1}\< (Y_{g}^{L})^{[1]}(a, x)f, u\>\\
    &=\Res_{x}x^{\wt a-1}(1-x)^{\wt a-1}\< Y_{g}^{L}(e^{-(1-x)^{-1}L_{(1)}}a, x)f, u\>\\
    &=\Res_{x}(-1)^{\wt a-1}x^{\wt a-1}(-1+x)^{\wt a-1}\< Y_{g}^{L}(e^{(-1+x)^{-1}L_{(1)}}a, x)f, u\>\\
    &=\Res_{x}(-1)^{-\wt a+1}x^{\wt a-1}(-1+x)^{-1-\lambda(n, j_2)}\\
    &\hspace{1.3cm} \times (-1+x)^{\wt a+\lambda(n, j_2)}\<Y_{g}^{L}(e^{(-1+x)^{-1}L_{(1)}}a, x)f, u\>\\
    &=\Res_{x}\sum_{i=0}^{\infty}\binom{-1-\lambda(n, j_2)}{i}(-1)^{-\wt a-\lambda(n, j_2)-i}x^{\wt a-1+i}\\
    &\hspace{1.3cm} \times (-1+x)^{\wt a+\lambda(n, j_2)}\< Y_{g}^{L}(e^{(-1+x)^{-1}L_{(1)}}a, x)f, u\>\\
    &=\Res_{x}\sum_{i=0}^{\lf m \rf}\binom{-1-\lambda(n, j_2)}{i}(-1)^{-\wt a-\lambda(n, j_2)-i}x^{\wt a-1+i}\\
    &\hspace{1.3cm} \times (-1+x)^{\wt a+\lambda(n, j_2)}\< Y_{g}^{L}(e^{(-1+x)^{-1}L_{(1)}}a, x)f, u\>\\
    &=\Res_{x}\sum_{i=0}^{\lf m \rf}\binom{-1-\lambda(n, j_2)}{i}(-1)^{-\wt a-\lambda(n, j_2)-i}x^{\wt a-1+i}\\
    &\hspace{1.3cm} \times (x-1)^{\wt a+\lambda(n, j_2)}\< Y_{W}^{*}(e^{(x-1)^{-1}L_{(1)}}a, x-1)f, u\>\\
    &=\Res_{x}\sum_{i=0}^{\lf m \rf}\binom{-1-\lambda(n, j_2)}{i}(-1)^{-\wt a-\lambda(n, j_2)-i}\\
    &\hspace{1.3cm} \times x^{\wt a+\lambda(n, j_2)}(x+1)^{\wt a-1+i}\< Y_{W}^{*}(e^{x^{-1}L_{(1)}}a, x)f, u\>\\
    &=\Res_{x}\sum_{i=0}^{\lf m \rf}\binom{-1-\lambda(n, j_2)}{i}(-1)^{-\wt a-\lambda(n, j_2)-i}x^{\wt a+\lambda(n, j_2)}\\
    &\hspace{1.3cm} \times (x+1)^{\wt a-1+i}\< f, Y_{W}(e^{xL_{(1)}}(-x^{2})^{-L_{(0)}}e^{x^{-1}L_{(1)}}a, x^{-1})u\>\\
    &=\Res_{x}\sum_{i=0}^{\lf m \rf}\binom{-1-\lambda(n, j_2)}{i}(-1)^{-\wt a-\lambda(n, j_2)-i}x^{\wt a+\lambda(n, j_2)}\\
    &\hspace{1.3cm} \times (x+1)^{\wt a-1+i}\< f, Y_{W}((-x^{2})^{-L_{(0)}}a, x^{-1})u\>\\
    &=\Res_{x}\sum_{i=0}^{\lf m \rf}\binom{-1-\lambda(n, j_2)}{i}(-1)^{-\lambda(n, j_2)-i}x^{-\wt a+\lambda(n, j_2)}\\
    &\hspace{1.3cm} \times (x+1)^{\wt a-1+i}\< f, Y_{W}(a, x^{-1})u\>\\
    &=\Res_{x}\sum_{i=0}^{\lf m \rf}\binom{\lambda(n, j_2)+i}{i}(-1)^{-\lambda(n, j_2)}\frac{(1+x)^{i-1}}{x^{\lambda(n, j_2)+i+1}}\< f, Y_{W}\left((1+x)^{L_{(0)}}a, x\right)u\>\\
    &=\<f, u{\dast}_{g_1, m}^{n}a\>.
\end{align*}
\end{proof}

\begin{proposition}\label{prop:pairing-R}
    Let $W$ be a weak $g$-twisted $V$-module and $m, p, n\in \frac{1}{T}\N$. Then, for $a\in V$, $u\in W$, and $f\in (A^{\dag}_{g_1^{-1}, g_2, n, m}(W))^{*}$,  
    \begin{equation}\label{eq:general-pairing-R}
        \Res_{x}x^{-1+n-p}\<(Y_{g}^{R}\circ g_1)^{[-1]}(x^{L_{(0)}}a, x)f, u\>=\<f, \theta(a){\uast}_{g_2^{-1}, m, p}^{n}u\>.
    \end{equation}
    In particular, let $p=n$, we get
    \begin{equation}\label{eq:pairing-R}
        \<a\bullet_{(-1)}^{R\circ g_1}f, u\>=\<f, \theta(a){\uast}_{g_2^{-1}, m}^{n}u\>.
    \end{equation}
\end{proposition}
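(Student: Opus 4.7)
The plan is to follow the template of Proposition~\ref{prop:pairing-L}, substituting $Y_g^R\circ g_1$ for $Y_g^L$ throughout. Both sides of \labelcref{eq:general-pairing-R} being linear in $a$, I would first reduce to homogeneous $a\in V^{(j_1, j_2)}$, so that $x^{L_{(0)}} a = x^{\wt a} a$ and the left-hand side takes the form $\Res_x x^{\wt a - 1 + n - p}\langle (Y_g^R\circ g_1)^{[-1]}(a, x) f, u\rangle$. From there the strategy has three stages: strip off the deformation $[-1]$, swap $Y_g^R\circ g_1$ for $Y_W^*$, and then push the pairing onto $u$ via \labelcref{eq:Y*-action} before changing variables.

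For the first stage, I would invoke the identity \labelcref{eq:deformed-residue} from Proposition~\ref{prop:deformed-A_{g, n}(V)-module} with $z_0=-1$, which under the residue rewrites $(Y_g^R\circ g_1)^{[-1]}(a, x)$ as $(1+x)^{\wt a - n + p - 1}(Y_g^R\circ g_1)(e^{(1+x)^{-1}L_{(1)}}a, x)$. For the second, I would appeal to Remark~\ref{rmk:arg>=pi} at $z=-1$ (where $\arg(-1)=\pi$ puts us in the $\arg(z)\ge\pi$ branch) to obtain the key replacement
\begin{equation*}
(1+x)^{k+j_1/T}(Y_g^R\circ g_1)(b, x) f = (1+x)^{k+j_1/T}Y_W^*(b, x) f
\end{equation*}
for any $b\in V^{(j_1, j_2)}$ and sufficiently large integer $k$; the characterization \labelcref{eq:characterization-of-dual} proved in Proposition~\ref{prop:A-Omega-D}, applied term-by-term to the summands $L_{(1)}^r a$ of $e^{(1+x)^{-1}L_{(1)}}a$, should ensure that the $(1+x)$-powers actually present in our expression are already large enough to justify the substitution. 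Once that swap is made, \labelcref{eq:Y*-action} transfers the pairing onto $u$.

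The remaining computation will mirror the one in Proposition~\ref{prop:pairing-L}. I would substitute $x\mapsto x^{-1}$ inside the residue to reorient the vertex operator, then combine the nested exponentials $e^{xL_{(1)}}$ (from $Y_W^*$), $(-x^{-2})^{L_{(0)}}$, and $e^{(1+x)^{-1}L_{(1)}}$ (from the deformation) via the conjugation formula \labelcref{eq:conjugation-formula-*}; the sign $(-1)^{\wt a}$ arising from $(-x^{-2})^{L_{(0)}} a$ together with the emerging operator $e^{L_{(1)}}$ should assemble precisely into $\theta(a)=e^{L_{(1)}}(-1)^{L_{(0)}}a$. A binomial expansion of the surviving $(1+x)$-power will then produce the coefficients $\binom{\lambda(m, j_1) + n - p + i}{i}(-1)^i$ appearing in \labelcref{eq:general-left-action}; the sum must truncate at $i=\lf p\rf$ because larger $i$ yield integrands whose associated vectors lie in $O^{\dag}_{g_2^{-1}, g_1, n, m}(W)$ and are therefore annihilated by $f$. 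The special case \labelcref{eq:pairing-R} is then immediate from \labelcref{eq:general-pairing-R} by setting $p=n$: the LHS collapses to $\langle a\bullet_{(-1)}^{R\circ g_1}f, u\rangle$ by definition of $\bullet_{(-1)}^{R\circ g_1}$, and the RHS collapses to $\langle f, \theta(a){\uast}_{g_2^{-1}, m}^{n}u\rangle$ via \labelcref{eq:left-action}. The main obstacle I anticipate is the careful bookkeeping of the $L_{(1)}$-conjugations together with verifying that the binomial truncation matches \labelcref{eq:general-left-action} exactly, but the analogous computation in Proposition~\ref{prop:pairing-L} offers a close blueprint.
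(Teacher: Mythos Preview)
Your strategy matches the paper's proof almost step for step: undo the deformation via \labelcref{eq:deformed-residue}, swap $(Y_g^R\circ g_1)$ for $Y_W^*$ using Remark~\ref{rmk:arg>=pi} together with \labelcref{eq:characterization-of-dual}, transfer the pairing to $u$, substitute $x\mapsto x^{-1}$, simplify the nested exponentials with \labelcref{eq:conjugation-formula-*}, and recognise $\theta(a){\uast}_{g_2^{-1}, m, p}^{n}u$.

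One point needs correction, though. Your claim that ``the $(1+x)$-powers actually present in our expression are already large enough to justify the substitution'' is false in general. After the deformation step the exponent of $(1+x)$ sitting in front of $(Y_g^R\circ g_1)(L_{(1)}^r a, x)f$ is $\wt a - r - 1 + p - n$, whereas the swap coming from \labelcref{eq:characterization-of-dual} requires exponent at least $\wt a - r + \lambda(m, j_1)$; this fails whenever $p - n - 1 < \lambda(m, j_1)$, e.g.\ for $p\le n$. The paper resolves this (exactly as in your template Proposition~\ref{prop:pairing-L}) by factoring
\[
(1+x)^{\wt a - 1 + p - n} \;=\; (1+x)^{p - n - \lambda(m, j_1) - 1}\,(1+x)^{\wt a + \lambda(m, j_1)},
\]
binomially expanding the first factor in powers of $x$, truncating at $i=\lfloor p\rfloor$ \emph{before} the swap via the $\Omega_n$-condition $x^{\wt a - r + \lambda(n, j_2)}Y_g^R(L_{(1)}^r a, x)f \in W^*[[x]]$ supplied by Proposition~\ref{prop:A-Omega-D}, and only then swapping the second factor to $(x+1)^{\wt a + \lambda(m, j_1)}Y_W^*(\cdots)$. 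Your proposed truncation via $O^\dag$-membership on the $W$-side is a legitimate equivalent justification for the truncation itself (again by Proposition~\ref{prop:A-Omega-D}), but the swap still needs that explicit factorization to go through.
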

\begin{proof}
Without loss of generality, we may assume $a\in V$ is homogeneous. If $a\in V^{(j_1, j_2)}$ with $j_2\not\equiv \Tilde{n}-\Tilde{p}\pmod{T}$, then both sides of \labelcref{eq:pairing-R} are 0. Let $a\in V^{(j_1, j_2)}$ with $j_2\equiv\Tilde{n}-\Tilde{p}\pmod{T}$, $f\in (A^{\dag}_{g_2^{-1}, g_1, n, m}(W))^{*}$, and $u\in W$. For any $r\in \N$, note that by \cref{prop:A-Omega-D} and the fact that $\wt L_{(1)}^ra=\wt a-r$,
\begin{equation}
    x^{\wt a-r+\lambda(n, j_2)}Y_{g}^{R}(L_{(1)}^ra, x)f\in \mathfrak{D}_{g_1, g_2}^{(-1)}(W)[[x]].
\end{equation}
Thus, for any $r, i\in \N$ such that $i>\lf p \rf$,
\begin{equation}
    x^{\wt a-r-1+n-p+i+r}Y_{g}^{R}(L_{(1)}^ra, x)f\in \mathfrak{D}_{g_1, g_2}^{(-1)}(W)[[x]].
\end{equation}    
Note further by \cref{rmk:arg>=pi} and \labelcref{eq:characterization-of-dual}, we have
\begin{equation}
    \begin{aligned}
        \MoveEqLeft
        (1+x)^{\wt a-r+\lambda(m, j_1)}(Y_{g}^{R}\circ g_1)(L_{(1)}^{r}a, x)f=(x+1)^{\wt a-r+\lambda(m, j_1)}Y_{W}^{*}(L_{(1)}^{r}a, x)f.
    \end{aligned}
\end{equation}
Applying $\sum_{r=0}^{\infty}\frac{1}{r!}$ to both sides, we get
\begin{equation}
    \begin{aligned}
        \MoveEqLeft
        (1+x)^{\wt a+\lambda(m, j_1)}(Y_{g}^{R}\circ g_1)(e^{(1+x)^{-1}L_{(1)}}a, x)f\\
        &=(x+1)^{\wt a+\lambda(m, j_1)}Y_{W}^{*}(e^{(x+1)^{-1}L_{(1)}}a, x)f.
    \end{aligned}
\end{equation}
Therefore, using \labelcref{eq:conjugation-formula-*} and \labelcref{eq:deformed-residue},
    \begin{align*}
        \MoveEqLeft
        \Res_{x}x^{\wt a-1+n-p}\<(Y_{g}^{R}\circ g_1)^{[-1]}(x^{L_{(0)}}a, x)f, u\>\\
        &=\Res_{x}x^{\wt a-1+n-p}(1+x)^{\wt a-1+p-n}\<(Y_{g}^{R}\circ g_1)(e^{(1+x)^{-1}L_{(1)}}a, x)f, u\>\\
        &=\Res_{x}x^{\wt a-1+n-p}(1+x)^{p-n-\lambda(m, j_1)-1}(1+x)^{\wt a+\lambda(m, j_1)}\\
        &\hspace{1.3cm} \times\<(Y_{g}^{R}\circ g_1)(e^{(1+x)^{-1}L_{(1)}}a, x)f, u\>\\
        &=\Res_{x}\sum_{i=0}^{\lf p \rf}\binom{p-n-\lambda(m, j_1)-1}{i}x^{\wt a-1+n-p+i}(1+x)^{\wt a+\lambda(m, j_1)}\\
        &\hspace{1.3cm} \times\<(Y_{g}^{R}\circ g_1)(e^{(1+x)^{-1}L_{(1)}}a, x)f, u\>\\
        &=\Res_{x}\sum_{i=0}^{\lf p \rf}\binom{p-n-\lambda(m, j_1)-1}{i}x^{\wt a-1+n-p+i}(x+1)^{\wt a+\lambda(m, j_1)}\\
        &\hspace{1.3cm} \times\<Y_{W}^{\ast}(e^{(x+1)^{-1}L_{(1)}}a, x)f, u\>\\
        &=\Res_{x}\sum_{i=0}^{\lf p \rf}\binom{p-n-\lambda(m, j_1)-1}{i}x^{\wt a-1+n-p+i}(x+1)^{\wt a+\lambda(m, j_1)}\\
        &\hspace{1.3cm} \times\<f, Y_{W}(e^{xL_{(1)}}(-x^{-2})^{L_{(0)}}e^{(x+1)^{-1}L_{(1)}}a, x^{-1})u\>\\
        &=\Res_{x}\sum_{i=0}^{\lf p \rf}\binom{p-n-\lambda(m, j_1)-1}{i}x^{\wt a+n-p-1+i}(x+1)^{\wt a+\lambda(m, j_1)}\\
        &\hspace{1.3cm} \times\<f, Y_{W}(e^{x(x+1)^{-1}L_{(1)}}(-x^{-2})^{L_{(0)}}a, x^{-1})u\>\\
        &=\Res_{x}\sum_{i=0}^{\lf p \rf}\binom{p-n-\lambda(m, j_1)-1}{i}x^{p-n-\lambda(m, j_1)-1-i}(1+x)^{\wt a+\lambda(m, j_1)}\\
        &\hspace{1.3cm} \times\<f, Y_{W}(e^{(1+x)^{-1}L_{(1)}}(-1)^{L_{(0)}}a, x)u\>\\
        &=\Res_{x}\sum_{i=0}^{\lf p \rf}\sum_{r\geq 0}\binom{\lambda(m, j_1)+n-p+i}{i}(-1)^{i}\frac{(1+x)^{\wt a-r+\lambda(m, j_1)}}{x^{\lambda(m, j_1)+n-p+i+1}}\\
        &\hspace{1.3cm} \times\<f, Y_{W}(\frac{1}{r!}L_{(1)}^{r}(-1)^{L_{(0)}}a, x)u\>\\
        &=\<f, \theta(a){\uast}_{g_2^{-1}, m, p}^{n}u\>.
    \end{align*}
\end{proof}
\begin{remark}
    The reader should compare $a\uast_{g_2^{-1}, m, p}^{n}b$ with $a[p]\uast_{m}^{n}b$ given by Li in \cite{Li22}. When $g_1=g_2=id$, $a\uast_{id, m, p}^{n}b-a[n-p]\uast_{m}^{n}b\in O^{\dag}_{id, id, n, m}(W)$. Our notations are more like Dong and Jiang's notation in \cite{DJ08b} since we will use some results in \cite{DJ08b} later. 
\end{remark}

Combining \cref{prop:pairing-L} and \cref{prop:pairing-R}, we have our second main result:
\begin{theorem}\label{thm:bimodule}
    Let $W$ be a weak $g$-twisted $V$-module and $m, n\in \frac{1}{T}\N$.
    \begin{enumerate}[(1)]
        \item The bilinear map $W\times V\rightarrow W: (u, a)\mapsto u\dast_{g_1, m}^{n}a$ reduces to a bilinear map $A^{\dag}_{g_2^{-1}, g_1, n, m}(W)\times A_{g_1, m}(V)\rightarrow A^{\dag}_{g_2^{-1}, g_1, n, m}(W)$ such that $A^{\dag}_{g_2^{-1}, g_1, n, m}(W)$ is a right $A_{g_1, m}(V)$-module;
        \item The bilinear map $V\times W\rightarrow W: (a, u)\mapsto a\uast_{g_2^{-1}, m}^{n}u$ reduces to a bilinear map $A_{g_2^{-1}, n}(V)\times A^{\dag}_{g_2^{-1}, g_1, n, m}(W) \rightarrow A^{\dag}_{g_2^{-1}, g_1, n, m}(W)$ such that $A^{\dag}_{g_2^{-1}, g_1, n, m}(W)$ is a left $A_{g_2^{-1}, n}(V)$-module;
        \item $A^{\dag}_{g_2^{-1}, g_1, n, m}(W)$ is an $A_{g_2^{-1}, n}(V)$-$A_{g_1, m}(V)$-bimodule.
    \end{enumerate}
\end{theorem}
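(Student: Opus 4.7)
\emph{Proof strategy.} The argument proceeds by duality. By \cref{prop:A-Omega-D}, $(A^{\dag}_{g_2^{-1}, g_1, n, m}(W))^{*}$ is identified with $\Omega_{m, n}(\mathfrak{D}_{g_1, g_2}^{(-1)}(W))$, which, by the paragraph following \cref{def:Omega_{m, n}}, is a left $A_{g_1, m}(V) \otimes A_{g_2, n}(V)$-module via the commuting actions $Y_g^L$ and $Y_g^R$ of \cref{thm:twisted-regular-representation}. Applying \cref{coro:deformed-action} with $z_0 = 1$ on the $Y^L_g$ side and with $z_0 = -1$ on the $Y^R_g \circ g_1$ side yields two deformed actions $\bullet_{(1)}^{L}$ and $\bullet_{(-1)}^{R\circ g_1}$ on $\Omega_{m, n}(\mathfrak{D}_{g_1, g_2}^{(-1)}(W))$. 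Each of these preserves the full intersection $\Omega_{m, n}$ (not merely $\Omega_m$ or $\Omega_n$ separately), and the two continue to commute on it; both facts follow from part~(3) of \cref{thm:twisted-regular-representation}, since any operator built from $Y_g^L$ commutes with any operator built from $Y_g^R$, even after the $L_{(1)}$-conjugation deformation of \cref{prop:L(-1)-resulted-module}.

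\emph{Right $A_{g_1, m}(V)$-module structure.} The pairing identity $\langle f, u \dast_{g_1, m}^{n} a\rangle = \langle a \bullet_{(1)}^{L} f, u\rangle$ from \cref{prop:pairing-L} gives well-definedness and the right-module axiom at once. If $u \in O^{\dag}_{g_2^{-1}, g_1, n, m}(W)$, then because $a \bullet_{(1)}^{L} f$ remains in $(A^{\dag})^{*}$, the pairing vanishes for every $f$; if $a \in O_{g_1, m}(V)$, then $a \bullet_{(1)}^{L} f = 0$. Iterating the pairing and invoking the \emph{left} $A_{g_1, m}(V)$-module axiom $(a \ast_{g_1, m} b) \bullet_{(1)}^{L} f = a \bullet_{(1)}^{L}(b \bullet_{(1)}^{L} f)$ yields $\langle f, (u \dast a) \dast b\rangle = \langle f, u \dast (a \ast_{g_1, m} b)\rangle$. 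Since $(A^{\dag})^{*}$ separates points of $A^{\dag}$, all of these pass to equalities in $A^{\dag}_{g_2^{-1}, g_1, n, m}(W)$.

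\emph{Left $A_{g_2^{-1}, n}(V)$-module structure via $\theta$.} The identity in \cref{prop:pairing-R} reads $\langle f, \theta(a) \uast_{g_2^{-1}, m}^{n} u\rangle = \langle a \bullet_{(-1)}^{R\circ g_1} f, u\rangle$. Writing $\bar a = \theta(a)$ and using that $\theta$ is an anti-isomorphism $A_{g_2, n}(V) \to A_{g_2^{-1}, n}(V)$ by \cref{thm:twistedZhu}, the same argument proves well-definedness. Iterating the pairing and using the left-module axiom for $\bullet_{(-1)}^{R\circ g_1}$ over $A_{g_2, n}(V)$ gives
$\bar a \uast (\bar b \uast u) \equiv \theta\bigl(\theta^{-1}(\bar b) \ast_{g_2, n} \theta^{-1}(\bar a)\bigr) \uast u = (\bar a \ast_{g_2^{-1}, n} \bar b) \uast u \pmod{O^{\dag}}$,
which is exactly the left-module axiom for $\uast$ over $A_{g_2^{-1}, n}(V)$. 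The anti-homomorphism property of $\theta$ is precisely what converts the \emph{left} $A_{g_2, n}(V)$-action on $(A^{\dag})^{*}$ into a \emph{left} $A_{g_2^{-1}, n}(V)$-action on $A^{\dag}$ rather than a right one.

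\emph{Bimodule compatibility.} Part~(3) reduces to $\bar a \uast (u \dast b) \equiv (\bar a \uast u) \dast b \pmod{O^{\dag}}$, which via both pairings becomes the identity $b \bullet_{(1)}^{L} (\theta^{-1}(\bar a) \bullet_{(-1)}^{R\circ g_1} f) = \theta^{-1}(\bar a) \bullet_{(-1)}^{R\circ g_1} (b \bullet_{(1)}^{L} f)$ recorded at the outset. The main technical obstacle in the whole argument is ensuring that the two deformed actions preserve the full intersection $\Omega_{m, n}$ and continue to commute there; once that is granted, the bimodule structure is a purely formal consequence of the two pairing identities and the anti-homomorphism property of $\theta$.
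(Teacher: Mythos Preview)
Your proposal is correct and follows essentially the same route as the paper: both arguments dualize via \cref{prop:A-Omega-D}, transport the $A_{g_1,m}(V)\otimes A_{g_2,n}(V)$-module structure on $\Omega_{m,n}(\mathfrak{D}_{g_1,g_2}^{(-1)}(W))$ through the pairing identities of \cref{prop:pairing-L,prop:pairing-R}, invoke the anti-isomorphism $\theta$ for part~(2), and deduce part~(3) from the commutativity of the $L$- and $R$-actions. Your explicit remark that $(A^{\dag})^{*}$ separates points of $A^{\dag}$ and your discussion of why the deformed actions still preserve $\Omega_{m,n}$ and commute are points the paper leaves implicit, but the underlying argument is the same.
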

\begin{proof}
    \begin{enumerate}[(1)]
        \item  Let $a\in O_{g_1, m}(V)$. Then $a\bullet_{(1)}^{L}f=0$ for any $f\in (A^{\dag}_{g_2^{-1}, g_1, n, m}(W))^{*}$ by \cref{prop:A-Omega-D}. Thus for any $f\in (A^{\dag}_{g_2^{-1}, g_1, n, m}(W))^{*}$ and any $u\in W$, $\<f, u{\dast}_{g_1, m}^{n}a\>=\<a\bullet_{(1)}^{L}f, u\>=0$. Hence $u{\dast}_{g_1, m}^{n}a\in O_{g_2^{-1}, g_1, n, m}^{\dag}(W)$. Let $u\in O_{g_2^{-1}, g_1, n, m}^{\dag}(W)$ and $a\in V$. Since $a\bullet_{(1)}^{L}f\in (A^{\dag}_{g_2^{-1}, g_1, n, m}(W))^{*}$ for any $f\in (A^{\dag}_{g_2^{-1}, g_1, n, m}(W))^{*}$, again $\<a\bullet_{(1)}^{L}f, u\>=0$. Thus $u{\dast}_{g_1, m}^{n}a\in O_{g_2^{-1}, g_1, n, m}^{\dag}(W)$. Therefore, the bilinear map does reduce. For $a, b\in V$, $f\in (A^{\dag}_{g_2^{-1}, g_1, n, m}(W))^{*}$ and $u\in W$, we have 
        \begin{equation}
            \begin{aligned}
                \MoveEqLeft
                \<f, u\dast_{g_1, m}^{n}(b\ast_{g_1, m}a)\>=\<(b\ast_{g_1, m}a)\bullet_{(1)}^{L}f, u\>\\
                &=\<b\bullet_{(1)}^{L}(a\bullet_{(1)}^{L}f), u)\>=\<a\bullet_{(1)}^{L}f, u\dast_{g_1, m}^{n}b\>\\
                &=\<f, (u\dast_{g_1, m}^{n}b)\dast_{g_1, m}^{n}a\>.
            \end{aligned}
        \end{equation}
        Thus, the reduced bilinear map gives a right $A_{g_1, m}(V)$-module structure on $A_{g_2^{-1}, g_1, n, m}^{\dag}(W)$.
        \item Note that $\theta^2=1$ and it is an anti-isomorphism from $A_{g_2^{-1}, n}(V)$ to $A_{g_2, n}(V)$. Thus for any $a\in V$, any $f\in (A^{\dag}_{g_2^{-1}, g_1, n, m}(W))^{*}$, and any $u\in W$, we have $\<f, a\uast_{g_2^{-1}, m}^{n}u\>=\<\theta(a)\bullet_{(-1)}^{R\circ g_1}f, u\>$, and $\theta(O_{g_2^{-1}, n}(V))=O_{g_2, n}(V)$. Now, a similar argument to that in (1) finishes the proof. We remark that it gives rise to a left $A_{g_2^{-1}, n}(V)$-module structure since $\theta$ is an anti-isomorphism.
        \item It suffices to show $(a\uast_{g_2^{-1}, m}^{n}u)\dast_{g_1, m}^{n} b=a\uast_{g_2^{-1}, m}^{n}(u\dast_{g_1, m}^{n} b)$ for any $a, b\in V$ and $u\in W$. Suppose $f\in (A^{\dag}_{g_2^{-1}, g_1, n, m}(W))^{*}$. Then
        \begin{equation}
            \<f, (a\uast_{g_2^{-1}, m}^{n}u)\dast_{g_1, m}^{n} b\>=\<\theta(a)\bullet_{(-1)}^{R\circ g_1}(b\bullet_{(1)}^{L}f), u\>,
        \end{equation}
        \begin{equation}
            \<f, a\uast_{g_2^{-1}, m}^{n}(u\dast_{g_1, m}^{n} b)\>=\<b\bullet_{(1)}^{L}(\theta(a)\bullet_{(-1)}^{R\circ g_1}f), u\>.
        \end{equation}
        Since $Y_{g}^{L}$ and $Y_{g}^{R}$ commutes with each other, $(Y_{g}^{L})^{[1]}$ and $(Y_{g}^{R}\circ g_1)^{[-1]}$ also commutes. Thus $\theta(a)\bullet_{(-1)}^{R\circ g_1}(b\bullet_{(1)}^{L}f)=b\bullet_{(1)}^{L}(\theta(a)\bullet_{(-1)}^{R\circ g_1}f)$. This completes the proof.
    \end{enumerate}
\end{proof}
\begin{remark}
    In \cite{DJ08b}, a bilinear map $V\times V\rightarrow V$: $(a, b)\mapsto a\ast_{g_1, m, p}^{n}b$ was defined to provide both left and right actions on their bimodules $A_{g_1, n, m}(V)$. But it is straightforward to see that when $W=V$ and $g_2=g_1^{-1}$,  $a\uast_{g_1, m, p}^{n} b=a\ast_{g_1, m, p}^{n}b$. So the left action in this paper coincides with the left action defined by Dong and Jiang in \cite{DJ08b}. We will show that the two right actions also coincide later.
\end{remark}
We finish this section with the following proposition:
\begin{proposition}
    Let $W$ be a weak $g$-twisted module. Then we have
    \begin{equation}
        a\uast_{g_2^{-1}, m, p}^{n} O^{\dag}_{g_2^{-1}, g_1, p, m}(W)\subset O^{\dag}_{g_2^{-1}, g_1, n, m}(W),
    \end{equation}
    for any $a\in V$ and $m, p, n\in\frac{1}{T}\N$. We set $O^{\dag}_{g_2^{-1}, g_1, n, m}(W)=W$ for $n<0$ (so that $(A^{\dag}_{g_2^{-1}, g_2, n, m}(W))^{*}=0=\Omega_{n, m}(\mathfrak{D}_{g_1, g_2}^{(-1)}(W))$).
\end{proposition}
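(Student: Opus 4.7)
The plan is to dualize, using the identification $(A^{\dag}_{g_2^{-1}, g_1, q, m}(W))^{*} = \Omega_{m, q}(\mathfrak{D}_{g_1, g_2}^{(-1)}(W))$ from \cref{prop:A-Omega-D} together with the pairing formula \labelcref{eq:general-pairing-R}. The boundary cases are easy: if $n < 0$ then $O^{\dag}_{g_2^{-1}, g_1, n, m}(W) = W$ by convention; if $p < 0$ then $\lf p \rf < 0$, so the summation in \labelcref{eq:general-left-action} is empty and $a\uast_{g_2^{-1}, m, p}^{n} u = 0$ for every $u$. So assume $p, n \in \frac{1}{T}\N$. It suffices to show that for every $f \in \Omega_{m, n}(\mathfrak{D}_{g_1, g_2}^{(-1)}(W))$ and every $u \in O^{\dag}_{g_2^{-1}, g_1, p, m}(W)$, we have $\langle f, a\uast_{g_2^{-1}, m, p}^{n} u\rangle = 0$.

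By linearity reduce to $a$ weight-homogeneous in some $V^{(j_1, j_2)}$. Since $\theta$ commutes with $g_1, g_2$, we have $\theta(a) \in V^{(j_1, j_2)}$; although $\theta(a) = \sum_{r \geq 0} b_r$ with $b_r = \tfrac{(-1)^{\wt a}}{r!} L_{(1)}^r a \in V^{(j_1, j_2)}$ of weight $\wt a - r$ is not weight-homogeneous overall, each summand is. Using $\theta^2 = 1$ and applying \labelcref{eq:general-pairing-R} to each weight-homogeneous $b_r$ gives
\begin{equation*}
\langle f, a\uast_{g_2^{-1}, m, p}^{n} u\rangle = \sum_{r \geq 0}\bigl\langle \Res_x x^{\wt a - r - 1 + n - p}(Y_g^R \circ g_1)^{[-1]}(b_r, x) f, \, u\bigr\rangle.
\end{equation*}
Each residue isolates a single mode of the deformed vertex operator $(Y_g^R)^{[-1]}$ applied to $f$, up to the scalar $e^{\frac{2\pi \iu j_1}{T}}$ contributed by $g_1$.

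The crux is to show that each such isolated mode lies in $\Omega_{m, p}(\mathfrak{D}_{g_1, g_2}^{(-1)}(W)) = (A^{\dag}_{g_2^{-1}, g_1, p, m}(W))^{*}$, which forces the pairing with $u$ to vanish. By \cref{prop:L(-1)-resulted-module}, $(\mathfrak{D}_{g_1, g_2}^{(-1)}(W), (Y_g^R)^{[-1]})$ is a weak $g_2$-twisted $V$-module in its own right; by \cref{prop:deformed-A_{g, n}(V)-module} its $\Omega_q$-subspaces coincide with those of the original $Y_g^R$-structure. Applying \cref{prop:Omega-is-submodule} to this deformed structure at the mode index $k = \wt a - r + n - p - 1$ sends $\Omega_n$ into $\Omega_{n + (\wt a - r) - k - 1} = \Omega_p$. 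On the $Y_g^L$-side, the commutativity of $Y_g^L$ and $Y_g^R$ from \cref{thm:twisted-regular-representation}(3) descends to commutativity with every mode of $(Y_g^R)^{[-1]}$, so the condition $f \in \Omega_m(\mathfrak{D}, Y_g^L)$ is preserved. Hence each summand pairs to zero with $u$, completing the proof.

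The main obstacle is that $(Y_g^R)^{[-1]}$ is built from an infinite combination of $Y_g^R$-modes via \labelcref{eq:deformed-residue}, so it is not a priori clear that a single mode of the deformed operator respects the finite-level $\Omega_q$-filtration. The resolution is precisely the viewpoint just used: regard $(Y_g^R)^{[-1]}$ as a bona fide weak $g_2$-twisted module structure and apply the mode-shift lemma directly to it, rather than unpacking the deformation formula.
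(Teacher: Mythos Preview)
Your argument is correct and follows the paper's approach: the paper's proof is the one-liner ``It is consequences of \cref{prop:Omega-is-submodule}, \cref{prop:A-Omega-D}, \cref{prop:pairing-L} and \cref{prop:pairing-R},'' and you have unpacked exactly how those ingredients combine (the detour through \cref{prop:pairing-L} is not actually needed for this direction). Your observation that one should apply \cref{prop:Omega-is-submodule} directly to the deformed structure $(Y_g^R\circ g_1)^{[-1]}$ rather than unwind \labelcref{eq:deformed-residue} is the right way to handle the infinite-mode issue.
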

\begin{proof}
    It is consequences of \cref{prop:Omega-is-submodule}, \cref{prop:A-Omega-D}, \cref{prop:pairing-L} and \cref{prop:pairing-R}.    
\end{proof}

\section{\texorpdfstring{$A^{\diamond}_{g_2^{-1}, g_1, n, m}(W)$}{Ag2-1, g1, n, m(W)} and \texorpdfstring{$\Omega_{m, n}^{\diamond}(\mathfrak{D}_{g_1, g_2}^{(-1)}(W))$}{m, n, Dg1, g2-1(W)}}
In this section, we study the spaces $A^{\diamond}_{g_2^{-1}, g_1, n, m}(W)$ and $\Omega_{m, n}^{\diamond}(\mathfrak{D}_{g_1, g_2}^{(-1)}(W))$ (defined below). For a fixed $m\in \frac{1}{T}\N$, we show the sum \[\Omega_{m, \square}^{\diamond}(\mathfrak{D}_{g_1, g_2}^{(-1)}(W))=\sum_{n\in \frac{1}{T}\N}\Omega_{m, n}^{\diamond}(\mathfrak{D}_{g_1, g_2}^{(-1)}(W))\] is a direct sum, and carries an admissible $g_2$-twisted $V$-module structure. By making identifications between $(A^{\diamond}_{g_2^{-1}, g_1, n, m}(W))^{*}$ and $\Omega_{m, n}^{\diamond}(\mathfrak{D}_{g_1, g_2}^{(-1)}(W))$ for any $n\in \frac{1}{T}\N$, we show the sum $A^{\diamond}_{g_2^{-1}, g_1, \square, m}(W)=\oplus_{n\in \frac{1}{T}\N}A^{\diamond}_{g_2^{-1}, g_1, n, m}(W)$ is an admissible $g_2^{-1}$-twisted $V$-module, and has $\Omega_{m, \square}^{\diamond}(\mathfrak{D}_{g_1, g_2}^{(-1)}(W))$ as its contragredient module. This extends the results of \cite{DJ08a, DJ08b, Li22}. 

Let $M$ be an arbitrary weak twisted module for some automorphism of $V$. For any $a\in V$, $f\in M^{*}$, and $u\in M$, recall \labelcref{eq:Y*-action}
\begin{equation}
    \< Y_{M}^*(a, x)f, u\>=\< f, Y_{M}(e^{xL_{(1)}}(-x^{-2})^{L_{(0)}}a, x^{-1})u\>.
\end{equation}
For the conformal vector $\omega$, we set $Y_{M}^{*}(\omega, x)=\sum_{n\in \N}L_{(n)}^{*}x^{-n-2}$. Then 
\begin{equation}
    \< L_{(n)}^{*}f, u\>=\< f, L_{(-n)}u\>
\end{equation} for any $n\in \N$. Write 
\begin{equation}
    (Y_{g}^{L})^{[1]}(\omega, x)=\sum_{n\in \N}L^{l}_{(n)}x^{-n-2} \quad \text{and}\quad  (Y_{g}^{R}\circ g_1)^{[-1]}(\omega, x)=\sum_{n\in \N}L^{r}_{(n)}x^{-n-2}.
\end{equation}
\begin{proposition}\label{prop:diamond-A-Omega-D}
    Let $W$ be a weak $g$-twisted module and $m, n\in \frac{1}{T}\N$. Then for $f\in \mathfrak{D}_{g_1, g_2}^{(-1)}(W)$, we have
    \begin{equation}
        (L_{(1)}^{*}+L_{(0)}^{*})f=(L^{r}_{(0)}-L^{l}_{(0)})f,
    \end{equation}
    which is equivalent to 
    \begin{equation}
        \<(L^{r}_{(0)}-L^{l}_{(0)})f, u\>=\<f, (L_{(-1)}+L_{(0)})u\>
    \end{equation}
    for any $u\in W$. Furthermore, $f\in (A^{\diamond}_{g_2^{-1}, g_1, n, m}(W))^{*}$ if and only if 
    \begin{equation}
        f\in \Omega_{n, m}(\mathfrak{D}_{g_1, g_2}^{-1}(W))\quad  \text{and}\quad (L^{r}_{(0)}-L^{l}_{(0)})f=(n-m)f.
    \end{equation}
\end{proposition}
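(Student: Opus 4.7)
The plan is to prove the two assertions in turn. The equivalence between the identity $(L_{(1)}^{*}+L_{(0)}^{*})f=(L^{r}_{(0)}-L^{l}_{(0)})f$ and its pairing form $\<(L^{r}_{(0)}-L^{l}_{(0)})f, u\>=\<f, (L_{(-1)}+L_{(0)})u\>$ is immediate from the defining relation $\<L_{(n)}^{*}f, u\> = \<f, L_{(-n)}u\>$ of the dual action. Thus the content of the first assertion is the operator identity itself; the characterization of $(A^{\diamond}_{g_2^{-1}, g_1, n, m}(W))^{*}$ will then follow quickly from it together with \cref{prop:A-Omega-D} and the definition of $O^{'}_{g_2^{-1}, g_1, n, m}(W)$.

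For the operator identity, my strategy is to apply \cref{prop:R-L-*} with $a = \omega$ and $z = -1$, exploiting that $\omega \in V^{(0, 0)}$ (so $j_1 = j_2 = 0$), $L_{(1)}\omega = 0$, and $g_1\omega = \omega$. A direct inspection of the definition \labelcref{eq:Y-R} reveals that the scalar prefactor $e^{-k\pi\iu}(-1-x)^{-k}(x+1)^{k}$ collapses to $1$ for every $k \in \N$ (using the convention $(-1)^{\lambda} = e^{\pi\lambda\iu}$ and the formal identity $(1+x)^{-k}(1+x)^{k} = 1$); the admissible choice $k = 0$ then gives $Y_{g}^{R}(\omega, x)f = Y_{W}^{*}(\omega, x)f$ on $\mathfrak{D}_{g_1, g_2}^{(-1)}(W)$. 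Substituting this equality into \labelcref{eq:R-L-*} and applying $\Res_{x_0}$ makes the left-hand side vanish; expanding the right-hand side and equating each $x_1^{-n-1}$ coefficient to zero for $n \geq 0$ yields by triangular induction that $L^{(L)}_{(m)}f := \Res_{x}x^{m+1}Y_{g}^{L}(\omega, x)f$ vanishes for every $m \geq -1$. Consequently, $Y_{g}^{L}(\omega, x)f \in W^{*}[[x]]$ has only nonnegative powers of $x$.

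I then use \labelcref{eq:deformed-residue} with $\wt\omega = 2$, $L_{(1)}\omega = 0$, and $m = 1$ to compute
\begin{equation*}
L^{r}_{(0)}f = \Res_{x}x(1+x)Y_{W}^{*}(\omega, x)f = L_{(0)}^{*}f + L_{(1)}^{*}f
\end{equation*}
and
\begin{equation*}
L^{l}_{(0)}f = \Res_{x}x(1-x)Y_{g}^{L}(\omega, x)f = 0,
\end{equation*}
the last vanishing because the integrand lies in $W^{*}[[x]]$. Subtracting gives the desired identity. For the characterization, $O^{'}_{g_2^{-1}, g_1, n, m}(W) = O^{\dag}_{g_2^{-1}, g_1, n, m}(W) + (L_{(-1)}+L_{(0)}+m-n)(W)$ by construction, so $f \in (A^{\diamond}_{g_2^{-1}, g_1, n, m}(W))^{*}$ iff $f$ annihilates both summands. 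By \cref{prop:A-Omega-D}, annihilation of $O^{\dag}$ is precisely $f \in \Omega_{m, n}(\mathfrak{D}_{g_1, g_2}^{(-1)}(W))$; annihilation of the second summand reads $\<f, (L_{(-1)}+L_{(0)})u\> = (n-m)\<f, u\>$ for every $u \in W$, which by the first part of the proposition translates to $(L^{r}_{(0)}-L^{l}_{(0)})f = (n-m)f$.

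The principal technical obstacle lies in the second paragraph: verifying $Y_{g}^{R}(\omega, x)f = Y_{W}^{*}(\omega, x)f$ cleanly (careful bookkeeping of the $(-1)^{\lambda}$ convention and of formal-series substitutions) and carrying out the triangular induction establishing $L^{(L)}_{(m)}f = 0$ for $m \geq -1$. Once these are in hand, the deformed-residue computations for $L^{r}_{(0)}$ and $L^{l}_{(0)}$ follow quickly from \labelcref{eq:deformed-residue}.
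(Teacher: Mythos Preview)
Your argument has a genuine gap at the claim $Y_{g}^{R}(\omega, x)f = Y_{W}^{*}(\omega, x)f$ and, in particular, at the assertion that $k = 0$ is an admissible choice in \labelcref{eq:Y-R}. The definition requires $k$ to satisfy $(x+1)^{k}Y_{W}^{*}(\omega, x)f \in W^{*}((x))$, and $k = 0$ works only if $Y_{W}^{*}(\omega, x)f$ already lies in $W^{*}((x))$. But $Y_{W}^{*}(\omega, x)f = \sum_{n}L_{(n)}^{*}f\,x^{-n-2}$, and since $\langle L_{(n)}^{*}f, u\rangle = \langle f, L_{(-n)}u\rangle$ is generically nonzero for all $n \leq 1$, infinitely many positive powers of $x$ occur. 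For the genuinely required $k > 0$, the formal cancellation $(1+x)^{-k}\bigl[(x+1)^{k}Y_{W}^{*}(\omega, x)f\bigr] = Y_{W}^{*}(\omega, x)f$ is illegitimate: the left side lies in $W^{*}((x))$ by construction while the right side does not, so the associativity you invoke fails. Hence your chain of deductions collapses, and in particular $L^{(L)}_{(m)}f = 0$ for $m \geq -1$ (equivalently $L^{l}_{(0)}f = 0$) is false---it would force the Virasoro zero mode of the weak $g_{1}$-twisted module $(\mathfrak{D}_{g_1, g_2}^{(-1)}(W), Y_{g}^{L})$ to vanish identically.

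The fix is to take a different residue of \labelcref{eq:R-L-*}. With $a = \omega$ and $z = -1$, apply $\Res_{x_{0}}\Res_{x_{1}}\,x_{0}x_{1}$. The first term on the left gives $\Res_{x_{1}}x_{1}(x_{1}+1)Y_{W}^{*}(\omega, x_{1})f = L_{(0)}^{*}f + L_{(1)}^{*}f$; the second gives $\Res_{x_{1}}x_{1}(1+x_{1})Y_{g}^{R}(\omega, x_{1})f$, which by \labelcref{eq:deformed-residue} (with $z_{0} = -1$, $m = 1$, $\wt\omega = 2$, and $g_{1}\omega = \omega$) equals $L_{(0)}^{r}f$; the right side gives $\Res_{x_{0}}x_{0}(-1+x_{0})Y_{g}^{L}(\omega, x_{0})f$, which by the same formula (with $z_{0} = 1$) equals $-L_{(0)}^{l}f$. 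Combining yields $(L_{(0)}^{*} + L_{(1)}^{*})f = (L_{(0)}^{r} - L_{(0)}^{l})f$ directly. Your treatment of the second assertion (the characterisation of $(A^{\diamond}_{g_2^{-1}, g_1, n, m}(W))^{*}$) is correct as written.
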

\begin{proof}
    Note that $\omega\in V^{(0, 0)}$. So the proof is the same as the proof of \cite[Proposition 4.1]{Li22}. 
\end{proof}
\begin{theorem}
    Let $W$ be a weak $g$-twisted module and $n, m\in \frac{1}{T}\N$. Then the space $A^{\diamond}_{g_2^{-1}, g_1, n, m}(W)$ has an $A_{g_2^{-1}, n}(V)$-$A_{g_1, m}(V)$-bimodule structure as a quotient bimodule of $A^{\dag}_{g_2^{-1}, g_1, n, m}(W)$.
\end{theorem}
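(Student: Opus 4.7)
The plan is to view $A^{\diamond}_{g_2^{-1}, g_1, n, m}(W)$ as the quotient of $A^{\dag}_{g_2^{-1}, g_1, n, m}(W)$ by the image $N$ of $(L_{(-1)}+L_{(0)}+m-n)(W)$, and to verify that $N$ is an $A_{g_2^{-1}, n}(V)$-$A_{g_1, m}(V)$-sub-bimodule of $A^{\dag}_{g_2^{-1}, g_1, n, m}(W)$. By \cref{prop:pairing-L,prop:pairing-R}, the bimodule actions on $A^{\dag}_{g_2^{-1}, g_1, n, m}(W)$ are the transposes of the operators $a \bullet_{(-1)}^{R \circ g_1}$ and $b \bullet_{(1)}^{L}$ on the dual, and since $(A^{\dag}_{g_2^{-1}, g_1, n, m}(W))^{*}$ separates points, the claim that $N$ is a sub-bimodule is equivalent to the claim that its annihilator $N^{\perp}$ is a sub-bimodule of $(A^{\dag}_{g_2^{-1}, g_1, n, m}(W))^{*}$. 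By \cref{prop:diamond-A-Omega-D}, $N^{\perp} = (A^{\diamond}_{g_2^{-1}, g_1, n, m}(W))^{*}$ is precisely the subspace of $\Omega_{m, n}(\mathfrak{D}_{g_1, g_2}^{(-1)}(W))$ on which $L^{r}_{(0)} - L^{l}_{(0)}$ acts as the scalar $n - m$.

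The key step is to show that this eigenspace is stable under both $a \bullet_{(-1)}^{R \circ g_1}$ and $b \bullet_{(1)}^{L}$. Stability of $\Omega_{m, n}(\mathfrak{D}_{g_1, g_2}^{(-1)}(W))$ itself is already contained in \cref{thm:bimodule} through the $A_{g_1, m}(V) \otimes A_{g_2, n}(V)$-module structure on $(A^{\dag}_{g_2^{-1}, g_1, n, m}(W))^{*}$ recorded after \cref{prop:A-Omega-D}, so the remaining task is to prove $[L^{r}_{(0)} - L^{l}_{(0)}, b \bullet_{(1)}^{L}] = 0$ and the analogous identity for $a \bullet_{(-1)}^{R \circ g_1}$. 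For homogeneous $b \in V$, both $L^{l}_{(0)}$ and $b \bullet_{(1)}^{L}$ are the zero modes of $\omega$ and $b$, respectively, inside the weak $g_1$-twisted $V$-module $(\mathfrak{D}_{g_1, g_2}^{(-1)}(W), (Y_g^L)^{[1]})$, which is a module by \cref{prop:L(-1)-resulted-module}; hence the universal relation $[L_{(0)}, b_{(\wt b - 1)}] = 0$ in any weak module immediately yields $[L^{l}_{(0)}, b \bullet_{(1)}^{L}] = 0$. Moreover, by \cref{thm:twisted-regular-representation}(3) the operators $Y_g^L(a, x_1)$ and $Y_g^R(c, x_2)$ commute for all $a, c \in V$, and this commutation is preserved under the variable substitutions and $e^{\pm L_{(1)}}$-conjugations that define the deformed structures, so $(Y_g^L)^{[1]}$ and $(Y_g^R \circ g_1)^{[-1]}$ remain commuting module actions; in particular $[L^{r}_{(0)}, b \bullet_{(1)}^{L}] = 0$. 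The argument for $a \bullet_{(-1)}^{R \circ g_1}$ is entirely symmetric, interchanging the roles of $L$ and $R$.

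Combining the two commutator statements, $N^{\perp}$ is a sub-bimodule of $(A^{\dag}_{g_2^{-1}, g_1, n, m}(W))^{*}$, hence $N$ is a sub-bimodule of $A^{\dag}_{g_2^{-1}, g_1, n, m}(W)$, and $A^{\diamond}_{g_2^{-1}, g_1, n, m}(W) = A^{\dag}_{g_2^{-1}, g_1, n, m}(W)/N$ inherits the desired $A_{g_2^{-1}, n}(V)$-$A_{g_1, m}(V)$-bimodule structure as a quotient. The most delicate point is confirming that $(Y_g^L)^{[1]}$ and $(Y_g^R \circ g_1)^{[-1]}$ remain mutually commuting module actions on $\mathfrak{D}_{g_1, g_2}^{(-1)}(W)$, but this follows formally from \cref{prop:L(-1)-resulted-module} and \cref{thm:twisted-regular-representation}(3) once one notes that the defining substitutions act separately on the $L$- and $R$-factors.
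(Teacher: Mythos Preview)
Your proof is correct and follows essentially the same approach that the paper indicates by deferring to \cite[Corollary~4.2]{Li22}: identify $(A^{\diamond}_{g_2^{-1}, g_1, n, m}(W))^{*}$ with the $(n-m)$-eigenspace of $L^{r}_{(0)}-L^{l}_{(0)}$ inside $\Omega_{m,n}(\mathfrak{D}_{g_1,g_2}^{(-1)}(W))$ via \cref{prop:diamond-A-Omega-D}, and then check that this eigenspace is stable under $\bullet_{(1)}^{L}$ and $\bullet_{(-1)}^{R\circ g_1}$ using the zero-mode relation $[L_{(0)},b_{(\wt b-1)}]=0$ together with the mutual commutativity of the deformed $L$- and $R$-actions. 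Your observation that the deformations $[z_0]$ only modify the $V$-argument and the insertion point, so that the commutativity in \cref{thm:twisted-regular-representation}(3) passes directly to $(Y_g^L)^{[1]}$ and $(Y_g^R\circ g_1)^{[-1]}$, is exactly the point needed; the same fact is used again later in the paper (see \cref{prop:two-actions-commuting}).
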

\begin{proof}
    The proof is the same as the proof of \cite[Corollary 4.2]{Li22}, we omit it.
\end{proof}
For $m ,n\in\frac{1}{T}\N$, set 
\begin{equation}\label{eq:Omega-diamond-m-n}
    \Omega_{m, n}^{\diamond}(\mathfrak{D}_{g_1, g_2}^{(-1)}(W)=\{f\in \Omega_{m, n}(\mathfrak{D}_{g_1, g_2}^{(-1)}(W)\mid (L^{r}_{(0)}-L^{l}_{(0)})f=(n-m)f\}.
\end{equation}
For any $a\in V$, set
    \begin{equation}
        (Y_{g}^{R}\circ g_1)^{[-1]}(a, x)=\sum_{p\in \frac{1}{T}\N}a^{r}_{(p)}x^{-p-1}.
    \end{equation}
\begin{proposition}\label{prop:Omega-diamond-fiexed-m}
    Let $W$ be a weak $g$-twisted module and $m$ a fixed number in $\frac{1}{T}\N$. Set $\Omega^{\diamond}_{m, n}(\mathfrak{D}_{g_1, g_2}^{(-1)}(W))=0$ for $n<0$. Then, $\Omega^{\diamond}_{m, \square}(\mathfrak{D}_{g_1, g_2}^{(-1)}(W))$ is an admissible $g_2$-twisted submodule of $\left(\mathfrak{D}_{g_1, g_2}^{(-1)}(W), (Y_{g}^{R}\circ g_1)^{[-1]}\right)$. 
\end{proposition}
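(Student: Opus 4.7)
The plan is to show two things: that the sum $\Omega^{\diamond}_{m,\square}(\mathfrak{D}_{g_1, g_2}^{(-1)}(W)) = \sum_{n \in \frac{1}{T}\N}\Omega^{\diamond}_{m,n}(\mathfrak{D}_{g_1, g_2}^{(-1)}(W))$ is direct, and that it is closed under every mode $a^{r}_{(p)}$ of $(Y_{g}^{R}\circ g_{1})^{[-1]}$ with the appropriate grading shift $\wt a - p - 1$. By \cref{prop:diamond-A-Omega-D}, each $\Omega^{\diamond}_{m,n}(\mathfrak{D}_{g_1, g_2}^{(-1)}(W))$ is the intersection of $\Omega_{m,n}(\mathfrak{D}_{g_1, g_2}^{(-1)}(W))$ with the $(n-m)$-eigenspace of $L^{r}_{(0)} - L^{l}_{(0)}$; since distinct values of $n \in \frac{1}{T}\N$ produce distinct eigenvalues $n - m$, the sum is automatically direct.

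Fix a homogeneous $a \in V^{(j_1, j_2)}$, a mode $a^{r}_{(p)}$ of $(Y_{g}^{R}\circ g_{1})^{[-1]}(a, x)$, and $f \in \Omega^{\diamond}_{m,n}(\mathfrak{D}_{g_1, g_2}^{(-1)}(W))$. The next step is to verify three conditions that together yield $a^{r}_{(p)}f \in \Omega^{\diamond}_{m, n + \wt a - p - 1}(\mathfrak{D}_{g_1, g_2}^{(-1)}(W))$. For the $Y_{g}^{L}$-side, \cref{thm:twisted-regular-representation}(3) ensures that every mode of $Y_{g}^{L}$ commutes with every mode of $Y_{g}^{R}$; since the $g_{1}$-twist only rescales modes by roots of unity and the $[-1]$-transformation expresses each new mode as a complex linear combination of old modes, this commutativity persists between $Y_{g}^{L}(b, x)$ and $a^{r}_{(p)}$ for every $b \in V$. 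Pulling $a^{r}_{(p)}$ through $Y_{g}^{L}(b, x)$ then shows $a^{r}_{(p)}f \in \Omega_{m}(\mathfrak{D}_{g_1, g_2}^{(-1)}(W), Y_{g}^{L})$. For the $Y_{g}^{R}$-side, the identifications $\Omega_{n}(\mathfrak{D}_{g_1, g_2}^{(-1)}(W), Y_{g}^{R}) = \Omega_{n}(\mathfrak{D}_{g_1, g_2}^{(-1)}(W), Y_{g}^{R}\circ g_{1}) = \Omega_{n}(\mathfrak{D}_{g_1, g_2}^{(-1)}(W), (Y_{g}^{R}\circ g_{1})^{[-1]})$ (the first since $g_{1}$ only rescales modes, the second by \cref{prop:deformed-A_{g, n}(V)-module}) allow us to apply \cref{prop:Omega-is-submodule} to the weak $g_{2}$-twisted module $(\mathfrak{D}_{g_1, g_2}^{(-1)}(W), (Y_{g}^{R}\circ g_{1})^{[-1]})$ and conclude $a^{r}_{(p)}f \in \Omega_{n + \wt a - p - 1}(\mathfrak{D}_{g_1, g_2}^{(-1)}(W), Y_{g}^{R})$.

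For the eigenvalue shift, note that $L^{l}_{(0)}$ is a mode of $(Y_{g}^{L})^{[1]}(\omega, x)$, so the same commutativity argument gives $[L^{l}_{(0)}, a^{r}_{(p)}] = 0$. Since $\omega \in V^{(0,0)}$ is homogeneous of weight $2$ and untwisted for $g_{2}$, the Borcherds commutator formula applied in the weak $g_{2}$-twisted module $(\mathfrak{D}_{g_1, g_2}^{(-1)}(W), (Y_{g}^{R}\circ g_{1})^{[-1]})$ yields the standard Virasoro relation $[L^{r}_{(0)}, a^{r}_{(p)}] = (\wt a - p - 1) a^{r}_{(p)}$. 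Combining these with the hypothesis $(L^{r}_{(0)} - L^{l}_{(0)}) f = (n - m) f$ gives $(L^{r}_{(0)} - L^{l}_{(0)}) a^{r}_{(p)} f = (n + \wt a - p - 1 - m) a^{r}_{(p)} f$, which is precisely the eigenvalue condition defining $\Omega^{\diamond}_{m, n + \wt a - p - 1}$. With the direct sum in place and this shifting rule verified, $\Omega^{\diamond}_{m,\square} = \bigoplus_{n \in \frac{1}{T}\N} \Omega^{\diamond}_{m, n}$ is an admissible weak $g_{2}$-twisted submodule. The main bookkeeping obstacle is carefully confirming that the commutativity of $Y_{g}^{L}$ and $Y_{g}^{R}$ from \cref{thm:twisted-regular-representation}(3) genuinely survives the $g_{1}$-twist on one side and the $[-1]$-transformation on the other; once that is pinned down, the rest reduces to already-established facts from Section 2.
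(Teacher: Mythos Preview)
Your proposal is correct and follows essentially the same approach as the paper: both use the eigenspace description of $\Omega^{\diamond}_{m,n}$ to get the direct sum, the commutators $[L^{r}_{(0)}, a^{r}_{(p)}]=(\wt a-p-1)a^{r}_{(p)}$ and $[L^{l}_{(0)}, a^{r}_{(p)}]=0$ for the eigenvalue shift, and \cref{prop:Omega-is-submodule} for the $\Omega_n$-shift on the $Y_{g}^{R}$-side. Your write-up is in fact more explicit than the paper's, which does not spell out the $Y_{g}^{L}$-side preservation via the commutativity in \cref{thm:twisted-regular-representation}(3); one minor citation issue is that the eigenspace description of $\Omega^{\diamond}_{m,n}$ is its definition \labelcref{eq:Omega-diamond-m-n} rather than a consequence of \cref{prop:diamond-A-Omega-D}.
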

\begin{proof}
    By \labelcref{eq:Omega-diamond-m-n}, $\Omega_{m, n}^{\diamond}(\mathfrak{D}_{g_1, g_2}^{(-1)}(W)$ is the eigenspace of $L^{r}_{(0)}-L^{l}_{(0)}$ with eigenvalue $n-m$. Thus $\Omega^{\diamond}_{m, \square}(\mathfrak{D}_{g_1, g_2}^{(-1)}(W))=\oplus_{n\in \frac{1}{T}\N}\Omega^{\diamond}_{m, n}(\mathfrak{D}_{g_1, g_2}^{(-1)}(W))$. 
    Note that $\left[L^{r}_{(0)}, a^{r}_{(p)}\right]=(\wt a-p-1)a^{r}_{(p)}$ and $\left[L^{l}_{(0)}, a^{r}_{(p)}\right]=0$ for any $p\in \frac{1}{T}\Z$. Using \cref{prop:Omega-is-submodule}, we get 
    \begin{equation}\label{eq:Omega-admissible}
        a^{r}_{(p)}\Omega^{\diamond}_{m, n}(\mathfrak{D}_{g_1, g_2}^{(-1)}(W))\subset\Omega^{\diamond}_{m, n+\wt a^{r}_{(p)}}(\mathfrak{D}_{g_1, g_2}^{(-1)}(W)).
    \end{equation}
    This completes the proof.
\end{proof}
We have an analogue of \cref{prop:Omega-diamond-fiexed-m}: 
\begin{proposition}\label{prop:Omega-diamond-fiexed-n}
    Let $W$ be a weak $g$-twisted module and $n$ a fixed number in $\frac{1}{T}\N$. Set $\Omega^{\diamond}_{m, n}(\mathfrak{D}_{g_1, g_2}^{(-1)}(W))=0$ for $m<0$. Then, $\Omega^{\diamond}_{\square, n}(\mathfrak{D}_{g_1, g_2}^{(-1)}(W))$ is an admissible $g_1$-twisted submodule of $\left(\mathfrak{D}_{g_1, g_2}^{(-1)}(W), (Y_{g}^{L})^{[1]}\right)$. 
\end{proposition}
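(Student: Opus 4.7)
The proof is a direct mirror of the proof of \cref{prop:Omega-diamond-fiexed-m}, with the roles of the left and right actions interchanged throughout. I will sketch the plan.

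First, since $\Omega_{m,n}^{\diamond}(\mathfrak{D}_{g_1, g_2}^{(-1)}(W))$ is, by \labelcref{eq:Omega-diamond-m-n}, the intersection of $\Omega_{m,n}(\mathfrak{D}_{g_1, g_2}^{(-1)}(W))$ with the eigenspace of $L^r_{(0)} - L^l_{(0)}$ of eigenvalue $n - m$, fixing $n$ and letting $m$ vary produces eigenspaces with pairwise distinct eigenvalues. Hence the sum $\Omega^{\diamond}_{\square, n}(\mathfrak{D}_{g_1, g_2}^{(-1)}(W))=\sum_{m}\Omega^{\diamond}_{m, n}(\mathfrak{D}_{g_1, g_2}^{(-1)}(W))$ is automatically direct.

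To establish the submodule structure with respect to $\left(\mathfrak{D}_{g_1, g_2}^{(-1)}(W), (Y_g^L)^{[1]}\right)$, expand $(Y_g^L)^{[1]}(a, x) = \sum_{p}a^l_{(p)} x^{-p-1}$ for homogeneous $a\in V$. I would verify two commutation relations. The first, $[L^l_{(0)}, a^l_{(p)}] = (\wt a - p - 1) a^l_{(p)}$, is the standard conformal-weight relation in the weak $g_1$-twisted module $\left(\mathfrak{D}_{g_1, g_2}^{(-1)}(W), (Y_g^L)^{[1]}\right)$. The second, $[L^r_{(0)}, a^l_{(p)}] = 0$, is inherited from the commutativity $Y_g^L(a, x_1) Y_g^R(b, x_2) = Y_g^R(b, x_2) Y_g^L(a, x_1)$ provided by \cref{thm:twisted-regular-representation}(3) (applied with $b=\omega$), pushed through the deformations $(Y_g^L)^{[1]}$ and $(Y_g^R \circ g_1)^{[-1]}$. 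A one-line calculation then yields
\[
(L^r_{(0)} - L^l_{(0)}) a^l_{(p)} f = \bigl(n - (m + \wt a^l_{(p)})\bigr) a^l_{(p)} f
\]
for $f \in \Omega^{\diamond}_{m,n}(\mathfrak{D}_{g_1, g_2}^{(-1)}(W))$, where $\wt a^l_{(p)} = \wt a - p - 1$. Combined with \cref{prop:Omega-is-submodule} applied to the $(Y_g^L)^{[1]}$-action (which controls the $\Omega_m$ condition) together with the fact that $a^l_{(p)}$ commutes with all components of $(Y_g^R \circ g_1)^{[-1]}$ (which preserves the $\Omega_n$ condition), one obtains
\[
a^l_{(p)} \Omega^{\diamond}_{m, n}(\mathfrak{D}_{g_1, g_2}^{(-1)}(W)) \subseteq \Omega^{\diamond}_{m + \wt a^l_{(p)}, n}(\mathfrak{D}_{g_1, g_2}^{(-1)}(W)).
\]
This is precisely the grading-shift condition for an admissible $g_1$-twisted submodule.

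The main obstacle I foresee is a bookkeeping one: carefully confirming that the $L_{(1)}$-type conjugations that define $(Y_g^L)^{[1]}$ and $(Y_g^R \circ g_1)^{[-1]}$ do not spoil the commutativity of the two sides, so that $[L^r_{(0)}, a^l_{(p)}] = 0$ indeed holds. Since each deformation is effected by coefficients expressed solely in terms of the $Y_g^L$-side (respectively $Y_g^R \circ g_1$-side) operators with $L_{(1)}$ acting inside $V$, this should pose no substantive difficulty, but it is the point that warrants the most attention.
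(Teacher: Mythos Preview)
Your proposal is correct and is exactly the argument the paper has in mind: the paper's own proof simply states that it is ``essentially the same as the proof of \cref{prop:Omega-diamond-fiexed-m}'' and omits the details, so your explicit mirror (swapping $L\leftrightarrow R$, using the eigenspace decomposition for directness, the commutators $[L^l_{(0)},a^l_{(p)}]=(\wt a-p-1)a^l_{(p)}$ and $[L^r_{(0)},a^l_{(p)}]=0$, and \cref{prop:Omega-is-submodule}) is precisely what is intended. Your caveat about the deformations $(\,\cdot\,)^{[1]}$ and $(\,\cdot\,)^{[-1]}$ preserving the $L$--$R$ commutativity is well placed but harmless, since these deformations only modify the $V$-argument and not the module operators themselves.
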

\begin{proof}
    The proof is essentially the same as the proof of \cref{prop:Omega-diamond-fiexed-m}, we omit it. 
\end{proof}
By \cref{prop:diamond-A-Omega-D}, $\Omega^{\diamond}_{m, n}(\mathfrak{D}_{g_1, g_2}^{(-1)}(W))=(A^{\diamond}_{g_2^{-1}, g_1, n, m}(W))^{*}$. Thus $\Omega^{\diamond}_{m, \square}(\mathfrak{D}_{g_1, g_2}^{(-1)}(W))$ is the graded dual of $A^{\diamond}_{g_2^{-1}, g_1, \square, m}(W)$. Next, we will show that $A^{\diamond}_{g_2^{-1}, g_1, \square, m}(W)$ is an admissible $g_2^{-1}$-twisted module.
\begin{lemma}\label{lem:well-define-a(p)}
    Let $W$ be a weak $g$-twisted module. Then we have
    \begin{equation}\label{eq:a-*-O-in-O}
        a\uast_{g_2^{-1}, m, p}^{n} O'_{g_2^{-1}, g_1, p, m}(W)\subset O'_{g_2^{-1}, g_1, n, m}(W)
    \end{equation}
    for any $a\in V$ and $m, p, n\in\frac{1}{T}\N$. We set $O'_{g_2^{-1}, g_1, n, m}(W)=W$ for $n<0$ (so that $(A^{\diamond}_{g_2^{-1}, g_2, n, m}(W))^{*}=0=\Omega^{\diamond}_{n, m}(\mathfrak{D}_{g_1, g_2}^{(-1)}(W))$).
\end{lemma}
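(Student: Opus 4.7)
My plan is to pass through duality, using the identification $\bigl(A^{\diamond}_{g_2^{-1}, g_1, n, m}(W)\bigr)^{*}=\Omega_{m, n}^{\diamond}(\mathfrak{D}_{g_1, g_2}^{(-1)}(W))$ provided by \cref{prop:diamond-A-Omega-D}. By the Hahn--Banach style separation (pointwise), it is enough to prove that
\begin{equation*}
    \langle f, a\uast_{g_2^{-1}, m, p}^{n} v\rangle = 0
\end{equation*}
for every $f\in (A^{\diamond}_{g_2^{-1}, g_1, n, m}(W))^{*}$ and every $v\in O'_{g_2^{-1}, g_1, p, m}(W)$. The cases $n<0$ or $p<0$ are handled by the convention in the statement, so I assume $n,p\in \frac{1}{T}\N$.

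The first quick reduction is to decompose $O'_{g_2^{-1}, g_1, p, m}(W)=O^{\dag}_{g_2^{-1}, g_1, p, m}(W)+(L_{(-1)}+L_{(0)}+m-p)(W)$. The $O^{\dag}$-part is already absorbed: the last proposition of Section 3 yields
\begin{equation*}
    a\uast_{g_2^{-1}, m, p}^{n} O^{\dag}_{g_2^{-1}, g_1, p, m}(W)\subseteq O^{\dag}_{g_2^{-1}, g_1, n, m}(W)\subseteq O'_{g_2^{-1}, g_1, n, m}(W),
\end{equation*}
so I only need to handle the extra generators $(L_{(-1)}+L_{(0)}+m-p)w$. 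The duality argument above covers both at once.

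The main computation is to rewrite the pairing using \cref{prop:pairing-R} in its general form. Reducing to homogeneous $a\in V$, substituting $\theta(a)$ for $a$ and using $\theta^{2}=\operatorname{id}$ together with $x^{L_{(0)}}\theta(a)=x^{\wt a}\theta(a)$, I extract the residue in $x$ and obtain
\begin{equation*}
    \langle f, a\uast_{g_2^{-1}, m, p}^{n} v\rangle = \bigl\langle (\theta(a))^{r}_{(\wt a-1+n-p)} f,\, v\bigr\rangle,
\end{equation*}
where $(\cdot)^{r}_{(q)}$ denotes the component operator of $(Y_{g}^{R}\circ g_{1})^{[-1]}(-,x)$. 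The weight-shift of this component is $\wt\theta(a)-(\wt a-1+n-p)-1=p-n$. By \cref{prop:Omega-diamond-fiexed-m}, operators of weight-shift $p-n$ send $\Omega_{m, n}^{\diamond}(\mathfrak{D}_{g_1, g_2}^{(-1)}(W))$ into $\Omega_{m, p}^{\diamond}(\mathfrak{D}_{g_1, g_2}^{(-1)}(W))=(A^{\diamond}_{g_2^{-1}, g_1, p, m}(W))^{*}$, hence annihilate $O'_{g_2^{-1}, g_1, p, m}(W)$; linearity in $a$ extends to the non-homogeneous case.

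The main obstacle will be bookkeeping: aligning the exponent $x^{-1+n-p}$ from \cref{prop:pairing-R} with the weight prefactor from $x^{L_{(0)}}\theta(a)$ and the formal expansion of $(Y_{g}^{R}\circ g_{1})^{[-1]}(\theta(a),x)$ so that the residue really picks out the component of weight-shift $p-n$. Once this identification is checked, the applicability of \cref{prop:Omega-diamond-fiexed-m} is automatic, since that proposition was proved precisely by computing $[L^{r}_{(0)}-L^{l}_{(0)},\, a^{r}_{(q)}]=(\wt a-q-1)a^{r}_{(q)}$ and invoking \cref{prop:Omega-is-submodule} for the $\Omega_{n}$-preservation, both of which already accommodate the deformation $[-1]$ thanks to $\Omega_{n}(\cdot, Y_{W})=\Omega_{n}(\cdot, Y_{W}^{[z_{0}]})$ from \cref{prop:deformed-A_{g, n}(V)-module} and the commutation in \cref{thm:twisted-regular-representation}(3).
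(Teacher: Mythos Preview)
Your approach is essentially the paper's: pair against $f\in\Omega^{\diamond}_{m,n}(\mathfrak{D}_{g_1,g_2}^{(-1)}(W))$ via \cref{prop:diamond-A-Omega-D}, apply \cref{prop:pairing-R} with $\theta(a)$ in place of $a$, and then invoke \cref{prop:Omega-diamond-fiexed-m} to see the resulting operator lands in $\Omega^{\diamond}_{m,p}=(A^{\diamond}_{g_2^{-1},g_1,p,m}(W))^{*}$, which kills $O'_{g_2^{-1},g_1,p,m}(W)$. The preliminary split into $O^{\dag}$ plus the $(L_{(-1)}+L_{(0)}+m-p)$ part is unnecessary but harmless, since your duality argument handles both simultaneously---the paper does not bother with it.

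One bookkeeping slip to fix: the identity $x^{L_{(0)}}\theta(a)=x^{\wt a}\theta(a)$ is false, because $\theta(a)=e^{L_{(1)}}(-1)^{L_{(0)}}a$ is not homogeneous when $L_{(1)}a\neq 0$; it is the sum $\sum_{r\ge 0}\frac{(-1)^{\wt a}}{r!}L_{(1)}^{r}a$ with the $r$-th term of weight $\wt a-r$. So the pairing is not the single component you wrote but rather
\[
\sum_{r\ge 0}\frac{(-1)^{\wt a}}{r!}\bigl\langle (L_{(1)}^{r}a)^{r}_{(\wt a-r-1+n-p)}f,\,v\bigr\rangle,
\]
and each summand has weight-shift $(\wt a-r)-(\wt a-r-1+n-p)-1=p-n$. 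Your conclusion therefore stands; only the intermediate displayed formula needs correction. The paper avoids this by not extracting components explicitly and citing \cref{prop:Omega-diamond-fiexed-m} directly on the residue expression.
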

\begin{proof}
    For any $a\in V$, $u\in O'_{g_2^{-1}, g_1, p, m}(W)$, and $f\in \Omega^{\diamond}_{m, n}(\mathfrak{D}_{g_1, g_2}^{(-1)}(W))$, by \cref{prop:pairing-R}, we have 
    \begin{equation}
        \<\Res_{x}x^{-1+n-p}(Y_{g}^{R}\circ g_1)^{[-1]}\left(x^{L_{(0)}}\theta(a), x\right)f, u\>=\<f, a\uast_{g_2^{-1}, m, p}^{n}u\>.
    \end{equation}
    Using \cref{prop:diamond-A-Omega-D} and \cref{prop:Omega-diamond-fiexed-m}, we see that $a\uast_{g_2^{-1}, m, p}^{n}u\in O'_{g_2^{-1}, g_1, n, m}(W)$. 
\end{proof}
\begin{definition}
    Let $W$ be a weak $g$-twisted module and $m\in \frac{1}{T}\N$ a fixed number. For homogeneous $a\in V$ and $p\in \frac{1}{T}\Z$, define $a_{(p)}$ to be a degree $\wt a-p-1$ operator on $A^{\diamond}_{g_2^{-1}, g_1, \square, m}(W)$ by
    \begin{equation}
        a_{(p)}(v+O'_{g_2^{-1}, g_1, n, m}(W))=a\uast_{g_2^{-1}, m, n}^{n+\wt a-p-1}v+O'_{g_2^{-1}, g_1, n+\wt a-p-1, m}(W)
    \end{equation}
    for $n\in \frac{1}{T}\N$ and $v\in W$.
\end{definition}
By \cref{lem:well-define-a(p)}, $a_{(p)}$ is well-defined, and when $n+\wt a-p-1<0$, $a_{(p)}=0$ on $A^{\diamond}_{g_2^{-1}, g_1, n, m}(W)$. For convenience, by abuse of notation, we still use $a$, $v$ to denote the images of $a$, $v$ in these Zhu algebras and bimodules when there is no confusion.
\begin{proposition}\label{prop:sum-of-A-is-module}
    Let $W$ be a weak $g$-twisted module. Define a linear map 
    \begin{equation*}
        Y^{\diamond}(-, x): V\rightarrow (\End A^{\diamond}_{g_2^{-1}, g_1, \square, m}(W))[[x^{-\frac{1}{T}}, x^{\frac{1}{T}}]]
    \end{equation*} as follows:
    \begin{equation}
        Y^{\diamond}(a, x)=\sum_{p\in \frac{1}{T}\Z}a_{(p)}x^{-p-1}.
    \end{equation}
    Then, for any fixed $m\in \frac{1}{T}\N$, $(A^{\diamond}_{g_2^{-1}, g_1, \square, m}(W), Y^{\diamond})$ is an admissible $g_2^{-1}$-twisted module and has $\left(\Omega^{\diamond}_{m, \diamond}(\mathfrak{D}_{g_1, g_2}^{(-1)}(W)), (Y^{R}_{g}\circ g_1)^{[-1]}\right)$ as its contragredient module.
\end{proposition}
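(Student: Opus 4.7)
The plan is to realize $(A^{\diamond}_{g_2^{-1}, g_1, \square, m}(W), Y^{\diamond})$ as the graded ``contragredient partner'' of the admissible $g_{2}$-twisted module $\bigl(\Omega^{\diamond}_{m, \square}(\mathfrak{D}_{g_1, g_2}^{(-1)}(W)), (Y_{g}^{R}\circ g_1)^{[-1]}\bigr)$ from \cref{prop:Omega-diamond-fiexed-m}, and then transfer the module axioms through the duality of \cref{prop:diamond-A-Omega-D,prop:pairing-R}. This parallels Li's strategy in the untwisted case, with the $\theta$-involution of \cref{thm:twistedZhu}(3) carrying the ``$g_2 \leftrightarrow g_{2}^{-1}$'' flip.

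The first step is to record the level-wise identification. By \cref{prop:diamond-A-Omega-D}, $\Omega^{\diamond}_{m, n}(\mathfrak{D}_{g_1, g_2}^{(-1)}(W)) = (A^{\diamond}_{g_2^{-1}, g_1, n, m}(W))^{*}$ for every $n\in\frac{1}{T}\N$, so $\Omega^{\diamond}_{m, \square}$ is the full graded dual of $A^{\diamond}_{g_2^{-1}, g_1, \square, m}(W)$, and the natural pairing $\langle\cdot,\cdot\rangle$ is level-wise non-degenerate. Then, applying \labelcref{eq:general-pairing-R} with $a\mapsto\theta(a)$ (and using $\theta^{2}=1$) gives, for weight-homogeneous $a\in V$, $v\in A^{\diamond}_{g_2^{-1}, g_1, n, m}(W)$ and $f\in \Omega^{\diamond}_{m, n+\wt a-p-1}$,
\begin{equation*}
    \<f, a_{(p)}v\> = \Res_{x}\, x^{\wt a-p-2}\,\<(Y_{g}^{R}\circ g_{1})^{[-1]}(x^{L_{(0)}}\theta(a), x)f, v\>.
\end{equation*}
Summing these mode-wise identities over $p$ and using \labelcref{eq:conjugation-formula-*} to promote the point-level $\theta$-twist to the generating-function substitution $e^{xL_{(1)}}(-x^{-2})^{L_{(0)}}$ together with $x\mapsto x^{-1}$, one arrives at the single operator identity
\begin{equation*}
    \<(Y_{g}^{R}\circ g_{1})^{[-1]}(a, x)f, v\> = \<f, Y^{\diamond}(e^{xL_{(1)}}(-x^{-2})^{L_{(0)}} a, x^{-1})v\>,
\end{equation*}
which is exactly the contragredient relation \labelcref{eq:contragredient-module}.

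Once this adjunction is in hand, the weak $g_{2}^{-1}$-twisted module axioms for $Y^{\diamond}$ follow by transport of structure across the non-degenerate pairing. The vacuum axiom comes from $\theta(\vac)=\vac$ and $(Y_{g}^{R}\circ g_{1})^{[-1]}(\vac,x)=\mathrm{id}$; the $g_{2}^{-1}$-equivariance condition $Y^{\diamond}(a,x)v\in x^{-r/T}A^{\diamond}((x))$ dualises to the (already known) $g_{2}$-equivariance of $(Y_{g}^{R}\circ g_{1})^{[-1]}$ on $\Omega^{\diamond}_{m, \square}$, with $\theta$ swapping the sign of the relevant eigenspace index; and the $g_{2}^{-1}$-twisted Jacobi identity for $Y^{\diamond}$ is equivalent, via the pairing, to the opposite Jacobi identity \labelcref{eq:opposite-jacobi} for the adjoint of $(Y_{g}^{R}\circ g_{1})^{[-1]}$, supplied by \cref{prop:Omega-diamond-fiexed-m}. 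Admissibility is built into the definition of $a_{(p)}$ together with \cref{lem:well-define-a(p)}, and the contragredient assertion is precisely the content of the adjoint identity just established.

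The main obstacle is the formal-variable bookkeeping in the second step: showing that the mode-by-mode pairing of \cref{prop:pairing-R}, composed with the $\theta$-involution, repackages correctly into the operator-valued substitution $a\mapsto e^{xL_{(1)}}(-x^{-2})^{L_{(0)}}a$, $x\mapsto x^{-1}$ of \labelcref{eq:contragredient-module}. This requires systematic use of \labelcref{eq:conjugation-formula-*} and \labelcref{eq:deformed-residue} to convert between the $[-1]$-deformation of $Y_{g}^{R}\circ g_{1}$ and the point-version $\theta$-conjugation, and to reconcile the $\Res_{x}\,x^{\wt a-p-2}$ extraction on the $\Omega^{\diamond}$-side with the $x^{-p-1}$ expansion on the $A^{\diamond}$-side. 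As an alternative, one could bypass the contragredient framework and verify the Jacobi identity for $Y^{\diamond}$ directly from \labelcref{eq:general-left-action}, following Dong and Jiang's construction of generalised Verma modules, but the route through \cref{prop:pairing-R} is considerably shorter given the machinery already developed.
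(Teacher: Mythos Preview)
Your proposal is correct and follows essentially the same route as the paper: both reduce the statement to the contragredient identity
\[
\<(Y_{g}^{R}\circ g_{1})^{[-1]}(a, x)f, v\> = \<f, Y^{\diamond}(e^{xL_{(1)}}(-x^{-2})^{L_{(0)}} a, x^{-1})v\>,
\]
established via \cref{prop:pairing-R} and the $\theta$-involution, and then transport the twisted-module axioms through the level-wise nondegenerate pairing supplied by \cref{prop:diamond-A-Omega-D}. The only tactical difference is that the paper verifies the displayed identity by starting from the right-hand side, expanding $e^{xL_{(1)}}(-x^{-2})^{L_{(0)}}a$ term by term, unwinding the definition of $a_{(p)}$ as ${\uast}_{g_2^{-1},m,n}^{n+\wt a-p-1}$, and invoking \labelcref{eq:general-pairing-R} directly, rather than summing pre-established mode identities; this is precisely the ``formal-variable bookkeeping'' you flag as the main obstacle.
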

\begin{proof}
    Similar to \cite[Theorem 4.8]{Li22}, it suffices to show 
    \begin{equation}
        \<(Y_{g}^{R})^{[-1]}(a, x)f, v\>=\<f, Y^{\diamond}(e^{xL_{(1)}}(-x^{-2})^{L_{(0)}}a, x^{-1})v\>
    \end{equation}
    for homogeneous $a\in V$, $f\in \Omega^{\diamond}_{m, n}(\mathfrak{D}_{g_1, g_2}^{(-1)}(W))$, and $v\in W$. By \cref{prop:pairing-R},
    \begin{align*}
        \MoveEqLeft
        \<f, Y^{\diamond}(e^{xL_{(1)}}(-x^{-2})^{L_{(0)}}a, x^{-1})v\>\\
        &=\sum_{i=0}^{\infty}\frac{1}{i!}x^{i-2\wt a}\<f, Y^{\diamond}(L_{(1)}^{i}(-1)^{\wt a}a, x^{-1})v\>\\
        &=\sum_{i=0}^{\infty}\sum_{p\in \frac{1}{T}\Z}\frac{1}{i!}x^{i-2\wt a+p+1}\<f, ((L_{(1)}^{i}(-1)^{\wt a}a)_{(p)}v\>\\
        &=\sum_{i=0}^{\infty}\sum_{p\in \frac{1}{T}\Z}\frac{1}{i!}(-1)^{\wt a}x^{i-2\wt a+p+1}\<f, ((L_{(1)}^{i}a)\uast_{g_2^{-1}, m, n}^{n+\wt a-i-p-1}v\>\\
        &=\Res_{y}\sum_{i=0}^{\infty}\sum_{p\in \frac{1}{T}\Z}\frac{1}{i!}(-1)^{\wt a}x^{i-2\wt a+p+1}y^{\wt a-i-p-2}\\
        &\hspace{1.5cm}\<(Y_{g}^{R}\circ g_1)^{[-1]}\left(y^{L_{(0)}}\theta(L_{(1)}^{i}a), y\right)f, v\>\\
        &=\Res_{y}\sum_{i=0}^{\infty}\sum_{p\in \frac{1}{T}\Z}\sum_{j=0}^{\infty}\frac{1}{i!j!}(-1)^{i}x^{i-2\wt a+p+1}y^{2\wt a-p-2-2i-j}\\
        &\hspace{1.5cm} \times\<(Y_{g}^{R}\circ g_1)^{[-1]}\left(L_{(1)}^{j}L_{(1)}^{i}a, y\right)f, v\>\\
        &=\sum_{i=0}^{\infty}\sum_{p\in \frac{1}{T}\Z}\sum_{j=0}^{\infty}\frac{1}{i!j!}(-1)^{i}x^{i-2\wt a+p+1}
        \<\left(L_{(1)}^{j}L_{(1)}^{i}a\right)^{r}_{(2\wt a-p-2-2i-j)}f, v\>\\
        &=\sum_{i=0}^{\infty}\sum_{j=0}^{\infty}\frac{1}{i!j!}(-1)^{i}x^{i+j}
        \<(Y_{g}^{R}\circ g_1)^{[-1]}\left(L_{(1)}^{j}L_{(1)}^{i}a, x\right)f, v\>\\
        &=\<(Y_{g}^{R}\circ g_1)^{[-1]}(a, x)f, v\>.
    \end{align*}
\end{proof}
Note that for any fixed $m\in \frac{1}{T}\N$, $A^{\diamond}_{g_2^{-1}, g_1, \square, m}(W)=\oplus_{n\in \frac{1}{T}\N}A^{\diamond}_{g_2^{-1}, g_1, n, m}(W)$ is a right $A_{g_1, m}(V)$-module. Let $U$ be another left $A_{g_1, m}(V)$-module. We shall show that $A^{\diamond}_{g_2^{-1}, g_1, \square, m}(W)\otimes_{A_{g_1, m}(V)} U$ is an admissible $g_2^{-1}$-twisted module. When $U=\C$, this is just \cref{prop:sum-of-A-is-module}.

First we consider the space $A^{\diamond}_{g_2^{-1}, g_1, \square, m}(W)\otimes_{\C} U$. By \cref{prop:sum-of-A-is-module}, it is obvious an admissible $g_2^{-1}$-twisted module with the module map given by $Y^{\diamond}(-, x)\otimes id$. Since $A^{\diamond}_{g_2^{-1}, g_1, \square, m}(W)\otimes_{A_{g_1, m}(V)} U$ is a quotient of $A^{\diamond}_{g_2^{-1}, g_1, \square, m}(W)\otimes_{\C} U$ modulo the relations 
\begin{equation}
    v\dast_{g_1, m}^{n} a\otimes u-v\otimes (a.u),
\end{equation}
where $v\in A^{\diamond}_{g_2^{-1}, g_1, n, m}(W)$, $a\in A_{g_1, m}(V)$, $u\in U$, and $n\in \frac{1}{T}\N$, it suffices to show $b_{(p)}$ preserves these relations for any homogeneous $b\in V$ and $p\in \frac{1}{T}\Z$, i.e., 
\begin{equation}
    b\uast_{g_2^{-1}, m, n}^{n+\wt b-p-1}(v\dast_{g_1, m}^{n}a)=(b\uast_{g_2^{-1}, m, n}^{n+\wt b-p-1}v)\dast_{g_1, m}^{n+\wt b-p-1}a \quad \text{in } A^{\diamond}_{g_2^{-1}, g_1, n+\wt b-p-1, m}(W).   
\end{equation}

\begin{proposition}\label{prop:two-actions-commuting}
    For $m, n, p\in \frac{1}{T}\N$, $a, b\in V$, and $v\in W$, we have
    \begin{align}
        &b\uast_{g_2^{-1}, m, p}^{n}(v\dast_{g_1, m}^{p}a)-(b\uast_{g_2^{-1}, m, p}^{n}v)\dast_{g_1, m}^{n}a\in O'_{g_2^{-1}, g_1, n, m}(W).
    \end{align}
    In particular, $A^{\diamond}_{g_2^{-1}, g_1, \square, m}(W)\otimes_{A_{g_1, m}(V)} U$ is an admissible $g_2^{-1}$-twisted module with module map given by $Y^{\diamond}(-, x)\otimes id$. 
\end{proposition}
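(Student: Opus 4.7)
The plan is to test the claimed containment by pairing against arbitrary $f\in\Omega^{\diamond}_{m,n}(\mathfrak{D}_{g_1,g_2}^{(-1)}(W))$, which by \cref{prop:diamond-A-Omega-D} equals $(A^{\diamond}_{g_2^{-1},g_1,n,m}(W))^{*}$. Introduce
\[ T_{b}^{n,p} := \Res_{x}x^{-1+n-p}(Y_{g}^{R}\circ g_{1})^{[-1]}(x^{L_{(0)}}\theta(b), x), \]
so that \labelcref{eq:general-pairing-R} reads $\<f, b\uast_{g_2^{-1},m,p}^{n}w\> = \<T_{b}^{n,p}f, w\>$ for every $w\in W$. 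For homogeneous $b$, $T_{b}^{n,p}$ is just the single mode $\theta(b)^{r}_{(\wt b + n - p - 1)}$, of $(L^{r}_{(0)}-L^{l}_{(0)})$-weight $p-n$ for the admissible $g_2$-twisted module structure on $\Omega^{\diamond}_{m,\square}(\mathfrak{D}_{g_1,g_2}^{(-1)}(W))$ from \cref{prop:Omega-diamond-fiexed-m}; hence $T_{b}^{n,p}$ maps $\Omega^{\diamond}_{m,n}$ into $\Omega^{\diamond}_{m,p}$.

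Next I verify that $a\bullet_{(1)}^{L}$ preserves $\Omega^{\diamond}_{m,n}$. It preserves $\Omega_{m,n}$ because on $\Omega_{m}((Y_{g}^{L})^{[1]})=\Omega_{m}(Y_{g}^{L})$ it realises the $A_{g_1,m}(V)$-module action of \cref{coro:deformed-action}, while it commutes with every $Y_{g}^{R}$-mode by \cref{thm:twisted-regular-representation}(3) and therefore also preserves $\Omega_{n}(Y_{g}^{R})$. It commutes with $L^{l}_{(0)}$ because $[\omega]$ is central in $A_{g_1,m}(V)$, and with $L^{r}_{(0)}$ by the same left/right commutativity, so the $(L^{r}_{(0)}-L^{l}_{(0)})$-eigenvalue $n-m$ is retained.

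With both preservation facts in place, I compute the pairing in two orders via \cref{prop:pairing-R,prop:pairing-L}:
\begin{align*}
\<f,\, b\uast_{g_2^{-1},m,p}^{n}(v\dast_{g_1,m}^{p} a)\> &= \<T_{b}^{n,p}f,\, v\dast_{g_1,m}^{p} a\> = \<a\bullet_{(1)}^{L}(T_{b}^{n,p}f),\, v\>, \\
\<f,\, (b\uast_{g_2^{-1},m,p}^{n} v)\dast_{g_1,m}^{n} a\> &= \<a\bullet_{(1)}^{L}f,\, b\uast_{g_2^{-1},m,p}^{n} v\> = \<T_{b}^{n,p}(a\bullet_{(1)}^{L}f),\, v\>.
\end{align*}
The two right-hand sides coincide because $a\bullet_{(1)}^{L}$ is a residue of $(Y_{g}^{L})^{[1]}$-modes while $T_{b}^{n,p}$ is a residue of $(Y_{g}^{R}\circ g_{1})^{[-1]}$-modes, and the $[z_0]$-deformations act only on the $V$-argument (\cref{prop:L(-1)-resulted-module}), so the mode-by-mode commutativity of $Y_{g}^{L}$ and $Y_{g}^{R}$ from \cref{thm:twisted-regular-representation}(3) is preserved. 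Since the resulting equality holds for every $f\in(A^{\diamond}_{g_2^{-1},g_1,n,m}(W))^{*}$, the stated difference must lie in $O'_{g_2^{-1},g_1,n,m}(W)$.

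For the "In particular" claim, \cref{prop:sum-of-A-is-module} already makes $A^{\diamond}_{g_2^{-1},g_1,\square,m}(W)\otimes_{\C}U$ an admissible $g_2^{-1}$-twisted module under $Y^{\diamond}\otimes id$; it remains only to check that each mode $b_{(q)}\otimes id$ carries a defining relation $v\dast_{g_1,m}^{p}a\otimes u - v\otimes a.u$ (with $v\in A^{\diamond}_{g_2^{-1},g_1,p,m}(W)$) to another such relation, which is exactly the identity above applied with $n=p+\wt b-q-1$. I expect the main obstacle to be the preservation check for $a\bullet_{(1)}^{L}$ on the refined space $\Omega^{\diamond}_{m,n}$---specifically the $(L^{r}_{(0)}-L^{l}_{(0)})$-eigenvalue condition---which rests on combining the centrality of $[\omega]$ in the Zhu algebra with the commutativity of the left and right twisted regular representations.
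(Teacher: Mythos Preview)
Your proof is correct and follows essentially the same approach as the paper's proof, which simply says the argument is ``similar to that of the third part of \cref{thm:bimodule}'' using commutativity of $(Y_{g}^{R}\circ g_1)^{[-1]}$ and $(Y_{g}^{L})^{[1]}$; you have spelled out the details the paper leaves implicit.

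One small remark: the preservation check you flag as the ``main obstacle'' is more than you need. To apply \cref{prop:pairing-R} in the second line of your displayed computation, you only require $a\bullet_{(1)}^{L}f\in(A^{\dag}_{g_2^{-1},g_1,n,m}(W))^{*}=\Omega_{m,n}(\mathfrak{D}_{g_1,g_2}^{(-1)}(W))$, not the refined space $\Omega^{\diamond}_{m,n}$; the $(L^{r}_{(0)}-L^{l}_{(0)})$-eigenvalue condition plays no role there. So the centrality-of-$[\omega]$ argument, while true, is unnecessary, and the proof really does reduce to the commutativity of the left and right actions just as in \cref{thm:bimodule}(3).
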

\begin{proof}
    The proof is similar to that of the third part of \cref{thm:bimodule}. We only need to use the fact that $(Y_{g}^{R}\circ g_1)^{[-1]}$ and $(Y_{g}^{L})^{[1]}$ give two commuting actions on $\mathfrak{D}_{g_1, g_2}^{(-1)}(W)$.
\end{proof}

\section{Dong and Jiang's conjecture}
In this section, we confirm the conjecture proposed by Dong and Jiang in \cite{DJ08a, DJ08b}. For any finitely-ordered $g_1\in \Aut(V)$ such that $g_1^{T}=1$ and $n, m\in \frac{1}{T}\N$, in constructing $A_{g_1, n}(V)$-$A_{g_1, m}(V)$-bimodule $A_{g_1, n, m}(V)$, Dong and Jiang defined a subspace $O_{g_1, n, m}(V)$ of $V$ consisting of three parts 
\begin{equation}
    O_{g_1, n, m}(V)=O'_{g_1, n, m}(V)+O''_{g_1, n, m}(V)+O'''_{g_1, n, m}(V),
\end{equation}
such that $A_{g_1, n, m}(V)=V/O_{g_1, n, m}(V)$. Recall that $O''_{g_1, n, m}(V)$ is the linear span of 
\begin{equation}
    d\uast_{g_1, m, p_3}^{n}\left((a\uast_{g_1, p_1, p_2}^{p_3}b)\uast_{g_1, m, p_1}^{p_3}c-a\uast_{g_1, m, p_2}^{p_3}(b\uast_{g_1, m, p_1}^{p_2}c)\right)
\end{equation}
for all $a, b, c, d\in V$ and $m, n, p_1, p_2, p_3\in \frac{1}{T}\N$. Also, recall that 
\begin{equation}
    O'''_{g_1, n, m}(V)=\sum_{p_1, p_2\in \frac{1}{T}\N}(V\uast_{g_1, p_1, p_2}^{n}O'_{g_1, p_2, p_1})\uast_{g_1, m, p_1}^{n}V.
\end{equation}
Dong and Jiang conjectured $O_{g_1, n, m}(V)=O'_{g_1, n, m}(V)$.
A recent result in \cite{HAN25} shows that $O'''_{g_1, n, m}(V)$ is redundant. We shall show that 
\begin{equation}
    O''_{g_1, n, m}(V)+O'''_{g_1, n, m}(V)\subseteq O'_{g_1, n, m}(V).    
\end{equation}
But firstly, let us show that the left (right) action defined in this paper coincides with that defined by Dong and Jiang.   

 Let $W=V$ and $g_1=g_2^{-1}$. It is straightforward to verify that  $O'_{g_1, g_1, n, m}(V)$ is identical to the space $O'_{g_1, n, m}(V)$ defined by Dong and Jiang in \cite{DJ08b}. So $A_{g_1, n, m}(V)$ is a quotient of $A_{g_1, g_1, n, m}^{\diamond}(V)$ for any $n, m\in \frac{1}{T}\N$. We show that the right action "$\dast_{g_1, m}^{n}$" in this paper is the same as the right action "$\ast_{g_1, m}^{n}$" defined by Dong and Jiang. (The two left actions are the same by definition.) Recall that for $a, b\in V$ such that $g_1a=e^{\frac{2\pi \iu r}{T}}a$ and $g_1^{-1}b=e^{\frac{2\pi \iu s}{T}}b$, where $r, s\in \N$ and $0\leq r, s<T$, Dong and Jiang defined 
\begin{equation}
    a\ast_{g_1, m}^{n}b=\Res_{x}\sum_{i=0}^{\lf m \rf }\binom{\lf n \rf +i}{i}(-1)^{i}\Res_{x}\frac{(1+x)^{\lambda(m, r)}}{x^{\lf n \rf +i+1}}Y\left((1+x)^{L_{(0)}}a, x\right)b,
\end{equation}
if $\overline{\Tilde{m}-\Tilde{n}}=r$, and $a\ast_{g_1, m}^{n}b=0$ otherwise.
While the right action in this paper is defined by 
\begin{equation}
    a\dast_{g_1, m}^{n}b=\Res_{x}\sum_{i=0}^{\lf m \rf}\binom{\lambda(n, s)+i}{i}(-1)^{-\lambda(n, s)}\frac{(1+x)^{i-1}}{x^{\lambda(n, s)+i+1}}Y((1+x)^{L_{(0)}}b, x)a.
\end{equation}
The following proposition shows that the two right actions coincide in $A^{\diamond}_{g_1, g_1, n, m}(V)$.
\begin{proposition}\label{prop:two-right-actions-coincide}
    For $a, b\in V$, we have $a\ast_{g_1, m}^{n}b-a\dast_{g_1, m}^{n}b\in O'_{g_1, g_1, n, m}(V)$. Therefore $a\dast_{g_1, m}^{n}b=a\ast_{g_1, m}^{n} b$ in $A^{\diamond}_{g_1, g_1, n, m}(V)$.
\end{proposition}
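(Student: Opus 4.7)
The plan is to apply vertex-operator skew-symmetry to Dong and Jiang's formula, simplify the resulting $e^{-y L_{(-1)}}$ modulo $O'_{g_1, g_1, n, m}(V)$, and then perform a formal change of variable in the residue to match the paper's formula for $\dast$. By linearity I reduce to homogeneous $a, b \in V$ with $g_1 a = e^{2\pi \iu r/T} a$ and $g_1^{-1} b = e^{2\pi \iu s/T} b$. Applying $Y(a, x) b = e^{x L_{(-1)}} Y(b, -x) a$ and substituting $y = -x$ recasts
\begin{equation*}
    a\ast_{g_1, m}^{n} b = \Res_y \sum_{i=0}^{\lf m \rf} \binom{\lf n \rf + i}{i}(-1)^{-\lf n \rf}\frac{(1-y)^{\lambda(m, r) + \wt a}}{y^{\lf n \rf + i + 1}} e^{-y L_{(-1)}} Y(b, y) a.
\end{equation*}

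The key lemma states that for every $L_{(0)}$-homogeneous $w \in V$ of weight $h$,
\begin{equation*}
    e^{-y L_{(-1)}} w \equiv (1-y)^{-(h + m - n)} w \pmod{O'_{g_1, g_1, n, m}(V)[[y]]}.
\end{equation*}
This is proved by induction on $k$: applying the defining relation $(L_{(-1)} + L_{(0)} + m - n) V \subseteq O'$ to $v = L_{(-1)}^{k-1} w$ gives $L_{(-1)}^{k} w \equiv -(L_{(0)} + m - n) L_{(-1)}^{k-1} w = -(h + m - n + k - 1) L_{(-1)}^{k-1} w$ modulo $O'$ (using that $L_{(-1)}^{k-1} w$ is $L_{(0)}$-homogeneous of weight $h + k - 1$). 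Iterating yields $L_{(-1)}^{k} w \equiv (-1)^{k}(h + m - n)(h + m - n + 1) \cdots (h + m - n + k - 1) w$ modulo $O'$, and summing the Taylor series in $y$ gives the rising-factorial generating function $(1-y)^{-(h + m - n)}$.

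Applying the lemma to each $b_{(k)} a$ (of weight $\wt a + \wt b - k - 1$) in the expansion of $Y(b, y) a$ and repackaging via the identity $\sum_{k} b_{(k)} a z^{k} = z^{-1} Y(b, z^{-1}) a$ (with $z = (1-y)/y$), the factor $e^{-y L_{(-1)}} Y(b, y) a$ reduces modulo $O'$ to $(1-y)^{-\wt a - \wt b - m + n} Y(b, y/(1-y)) a$. Substituting back and performing the formal change of variable $u = y/(1-y)$ (so $y = u/(1+u)$ and $dy/du = (1+u)^{-2}$) in the residue puts $a\ast_{g_1, m}^{n} b$ into exactly the form of the paper's $a\dast_{g_1, m}^{n} b$ precisely when $\lambda(m, r) = \lf n \rf + m - n$, which is easily verified to be equivalent to Dong and Jiang's non-vanishing condition $\overline{\tilde m - \tilde n} = r$. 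The edge cases, where one of the two expressions vanishes automatically (namely $\overline{\tilde m - \tilde n} \neq r$ for $\ast$, or $s \neq 0$ for $\dast$), are handled by showing that the other side pairs trivially with $(A^{\diamond}_{g_1, g_1, n, m}(V))^{*} = \Omega_{m, n}^{\diamond}(\mathfrak{D}^{(-1)}_{g_1, g_1^{-1}}(V))$ via Proposition~3.10 and the inherited $g_1 \otimes g_1^{-1}$-grading, and hence lies in $O'$.

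The main obstacle will be the formal change of variable $u = y/(1-y)$ in the residue, which must respect the Laurent-series structure of $Y(b, y/(1-y)) a$ (both the positive and negative $y$-powers produced after expansion), together with the careful matching of $(1+u)$-exponents and binomial coefficients between the rewritten $\ast$ and the $\dast$ formula — the identification is forced by the identity $\lambda(m, r) = \lf n \rf + m - n$, so the grading condition is not imposed by hand but emerges from the calculation.
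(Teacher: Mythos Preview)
Your main computation is exactly the paper's, just run in the opposite direction: the paper starts from $a\dast_{g_1,m}^{n}b$, applies skew-symmetry $Y(b,x)a=e^{xL_{(-1)}}Y(a,-x)b$, uses the same congruence (your ``key lemma'' is precisely the identity $e^{xL_{(-1)}}Y(a,-x)b\equiv(1+x)^{n-m-\wt a-\wt b}Y(a,\frac{-x}{1+x})b$ modulo $(L_{(-1)}+L_{(0)}+m-n)V$, which the paper quotes as well-known from \cite{Z96,DLM98a}), and performs the change of variable to land on $a\ast_{g_1,m}^{n}b$; your verification that $\lambda(m,r)=\lf n\rf+m-n$ is equivalent to $\overline{\tilde m-\tilde n}=r$ is correct and is exactly where the two formulas match up.

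The one soft spot is your treatment of the degenerate cases. For $s\neq 0$ or $\overline{\tilde m-\tilde n}\neq r$ the paper does not argue via duality; it invokes \cite[Lemma~3.1]{DJ08b}, which says directly that any $v\in V$ whose $g_1$-eigenvalue is not $e^{2\pi\iu\,\overline{\tilde m-\tilde n}/T}$ already lies in $O'_{g_1,n,m}(V)$. Your sketch ``pairs trivially with $(A^{\diamond})^{*}$ via Proposition~3.10 and the inherited $g_1\otimes g_1^{-1}$-grading'' is not a proof of this fact: Proposition~3.10 concerns the $\uast$-action on $O^{\dag}$ and gives no $g_1$-eigenspace constraint on $\Omega_{m,n}^{\diamond}$, and it is not automatic that $\Omega_{m,n}^{\diamond}(\mathfrak{D}^{(-1)})$ is concentrated in a single $g_1$-eigencomponent of $V^{*}$. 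Either cite \cite[Lemma~3.1]{DJ08b} as the paper does, or supply the (short) direct argument that such $v$ lie in $O^{\dag}_{g_1,g_1,n,m}(V)$.
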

\begin{proof}
    Without loss of generality, we may assume $a, b$ to be homogeneous such that $g_1a=e^{\frac{2\pi \iu r}{T}}a$ and $g_1^{-1}b=e^{\frac{2\pi \iu s}{T}}b$, where $r, s\in \N$ and $0\leq r, s<T$. The following congruence relation is well-known (cf. \cite{Z96, DLM98a}):
    \begin{equation}
        e^{xL_{(-1)}}Y(a, -x)b\equiv (1+x)^{n-m-\wt a-\wt b}Y(a, \frac{-x}{1+x})b \pmod{(L_{(-1)}+L_{(0)}+m-n)V}.
    \end{equation}
    \begin{enumerate}[(1)]
        \item When $s\neq 0$, $a\dast_{g_1, m}^{n}b=0$, and $a\ast_{g_1, m}^{n}b$ has $e^{\frac{2\pi \iu (r-s)}{T}}$ as eigenvalue with respect to $g_1$. By \cite[Lemma 3.1]{DJ08b}, if $\overline{\Tilde{m}-\Tilde{n}}\neq \overline{r-s}$, $a\ast_{g_1, m}^{n}b\in O'_{g_1, g_1, n, m}(V)$. If $\overline{\Tilde{m}-\Tilde{n}}=\overline{r-s}$, since $s\neq 0$, $\overline{\Tilde{m}-\Tilde{n}}\neq r$, thus $a\ast_{g_1, m}^{n}b=0$. 
        \item When $s=0$, $a\dast_{g_1, m}^{n}b$ has $e^{\frac{2\pi \iu r}{T}}$ as eigenvalue with respect to $g_1$. If $\overline{\Tilde{m}-\Tilde{n}}\neq r$, $a\ast_{g_1, m}^{n}b=0$ by definition and $a\dast_{g_1, m}^{n}b\in O'_{g_1, g_1, n, m}(V)$ by \cite[Lemma 3.1]{DJ08b} again. If $\overline{\Tilde{m}-\Tilde{n}}=r$, then
        \begin{align*}
            \MoveEqLeft
            a\dast_{g_1, m}^{n}b\\
            &=\Res_{x}\sum_{i=0}^{\lf m \rf}\binom{\lf n \rf +i}{i}(-1)^{\lf n \rf}\frac{(1+x)^{\wt b+i-1}}{x^{\lf n \rf +i+1}}Y(b, x)a\\
            &=\Res_{x}\sum_{i=0}^{\lf m \rf}\binom{\lf n \rf +i}{i}(-1)^{\lf n \rf}\frac{(1+x)^{\wt b+i-1}}{x^{\lf n \rf +i+1}}e^{xL_{(-1)}}Y(a, -x)b\\
            &\equiv \Res_{x}\sum_{i=0}^{\lf m \rf}\binom{\lf n \rf +i}{i}(-1)^{\lf n \rf}\frac{(1+x)^{n-m-\wt a+i-1}}{x^{\lf n \rf +i+1}}Y(a, \frac{-x}{1+x})b\\
            &\equiv \Res_{x}\sum_{i=0}^{\lf m \rf}\binom{\lf n \rf +i}{i}(-1)^{i}\frac{(1+x)^{\wt a+m-n+\lf n \rf}}{x^{\lf n \rf +i+1}}Y(a, x)b\\
            &\equiv a\ast_{g_1, m}^{n}b \pmod{(L_{(-1)}+L_{(0)}+m-n)V}.
        \end{align*}
        Since $(L_{(-1)}+L_{(0)}+m-n)V\subset O'_{g_1, g_1, n, m}(V)$, we are done.
    \end{enumerate}
\end{proof}

So far, we have shown that $A_{g_1, n, m}(V)$ is a quotient bimodule of $A^{\diamond}_{g_1, g_1, n, m}(V)$. Recall that $O'_{g_1, g_1, n, m}(V)=O'_{g_1, n, m}(V)$. 
The following two lemmas will be used later:
\begin{lemma}\label{lem:O-*-a-in-O}
    Let $m, p, n\in \frac{1}{T}\N$ and $b\in V$. Then
    \begin{equation}
        O'_{g_1, n, p}(V)\uast_{g_1, m, p}^{n}b\in O'_{g_1, n, m}(V).
    \end{equation}
\end{lemma}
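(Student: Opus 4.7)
The plan is to convert the claim into a vanishing statement on the regular-representation side via the pairing $(A^{\diamond}_{g_1, g_1, n, m}(V))^{*} = \Omega^{\diamond}_{m, n}(\mathfrak{D}^{(-1)}_{g_1, g_1^{-1}}(V))$ from \cref{prop:diamond-A-Omega-D}, mirroring the strategy used for \cref{lem:well-define-a(p)} but applied to the left-hand slot of $\uast_{g_1, m, p}^{n}$ rather than its right-hand slot. Specializing \cref{prop:pairing-R} to $W = V$ and $g_2 = g_1^{-1}$ (so $g = 1$), one has, for $f \in \Omega^{\diamond}_{m, n}(\mathfrak{D}^{(-1)}_{g_1, g_1^{-1}}(V))$, $a, b \in V$, and $p \in \frac{1}{T}\N$,
\[
\langle f, a \uast_{g_1, m, p}^{n} b \rangle = \Res_x x^{n-p-1} \langle (Y_{g}^{R} \circ g_1)^{[-1]}(x^{L_{(0)}} \theta(a), x) f, b \rangle.
\]
Thus the lemma reduces to showing that, for $a \in O'_{g_1, n, p}(V)$, the element $\Res_x x^{n-p-1} (Y_{g}^{R} \circ g_1)^{[-1]}(x^{L_{(0)}} \theta(a), x) f$ is the zero functional in $V^{*}$ for every such $f$; the degree-shift argument inside the proof of \cref{prop:Omega-diamond-fiexed-m} automatically places this element inside $\Omega^{\diamond}_{m, p}(\mathfrak{D}^{(-1)}_{g_1, g_1^{-1}}(V))$, so pairing with varying $b \in V$ detects the whole functional.

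Next, split $O'_{g_1, n, p}(V) = O^{\dag}_{g_1, g_1, n, p}(V) + (L_{(-1)} + L_{(0)} + p - n) V$ and handle the summands in turn. For $a = (L_{(-1)} + L_{(0)} + p - n) c$ with $c$ homogeneous, expand $a \uast_{g_1, m, p}^{n} b$ using the $L_{(-1)}$-derivative property $Y(L_{(-1)} c, x) = \partial_x Y(c, x)$ and integrate by parts inside the residue; this reduces the output to an element of $O^{\dag}_{g_1, g_1, n, m}(V)$ plus an explicit term of the form $(L_{(-1)} + L_{(0)} + m - n) b' \in O'_{g_1, n, m}(V)$. For $a = c \circ_{g_1, g_1, p}^{n} d$ with $c \in V^{(j_1, j_2)}$ homogeneous and $d \in V$, substitute into the residue for $a \uast_{g_1, m, p}^{n} b$ and rewrite the resulting two-variable residue via the VOA Jacobi identity applied to $Y(c, x + y) Y(d, x) b$; after interchanging the order of the $x$- and $y$-residues and re-expanding the rational kernel in the appropriate formal region, the integrand matches the generating form of \cref{prop:k-s-O} and lands in $O^{\dag}_{g_1, g_1, n, m}(V)$ modulo an $(L_{(-1)} + L_{(0)} + m - n)$-correction.

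The main obstacle will be the circle-product case: the residue swap and re-expansion must be carried out while tracking the various fractional exponents $\lambda(\cdot, \cdot)$ appearing in the definitions of both $c \circ_{g_1, g_1, p}^{n} d$ and $\uast_{g_1, m, p}^{n}$, so that the new integrand aligns precisely with the exponent conditions of \cref{prop:k-s-O}. The untwisted precedent in \cite{Li22} provides the blueprint, and the twisted version should follow the same template with the $\lambda(x, r)$-bookkeeping dictated by the eigenspace decomposition \labelcref{eq:common-eigenvector-space} inserted throughout.
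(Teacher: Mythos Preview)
Your opening paragraph sets up the pairing exactly as the paper does: the paper also writes $\langle f, a\uast_{g_1,m,p}^{n}b\rangle = \Res_x x^{n-p-1}\langle (Y_g^R\circ g_1)^{[-1]}(x^{L_{(0)}}\theta(a),x)f, b\rangle$ via \cref{prop:pairing-R} and reduces the lemma to the vanishing of that residue. From there, however, the paper takes a much shorter path than your paragraphs~2--3. It does not touch $a\uast_{g_1,m,p}^{n}b$ directly at all; instead it observes, from the proof of \cite[Proposition~4.2]{DJ08b}, that $\theta(O'_{g_1,n,p}(V))\subseteq O'_{g_1^{-1},p,n}(V)$, so $\theta(a)\in O'_{g_1^{-1},p,n}(V)$, and then invokes the proof of \cite[Lemma~5.2]{DJ08b} to conclude that the residue operator $\Res_x x^{n-p-1}(Y_g^R\circ g_1)^{[-1]}(x^{L_{(0)}}\theta(a),x)$ annihilates $f$. (The paper takes $f\in\Omega_{m,n}$ rather than $\Omega^{\diamond}_{m,n}$; either suffices for the stated conclusion.)

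Your paragraphs~2--3 effectively abandon the dual-side reduction and propose instead to verify $a\uast_{g_1,m,p}^{n}b\in O'_{g_1,n,m}(V)$ by hand. That is a genuinely different route. The $(L_{(-1)}+L_{(0)}+p-n)c$ case is indeed a routine integration by parts, but the circle-product case you flag as the ``main obstacle'' is exactly where the work lies: your outline (apply Jacobi, swap residues, re-expand, match the exponents of \cref{prop:k-s-O}) is a statement of intent rather than an argument, and carrying it out with all the $\lambda(\cdot,\cdot)$ bookkeeping would essentially re-derive the content of \cite[Lemma~5.2]{DJ08b} that the paper simply cites. So your approach is not wrong, but it takes the longer road and leaves precisely the hard step unverified, while the paper's approach dispatches the lemma in three lines by leaning on the existing \cite{DJ08b} machinery.
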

\begin{proof}
    Let $f\in \Omega_{m, n}(\mathfrak{D}_{g_1, g_1^{-1}}^{(-1)}(V))$ and $a\in O'_{g_1, n, p}(V)$. By \cref{prop:pairing-R}, we have
    \begin{equation}
            \<f, a\uast_{g_1, m, p}^{n}b\>
            =\Res_{x}x^{-1+n-p}\<(Y_{g}^{R}\circ g_1)^{[-1]}\left(x^{L_{(0)}}\theta(a), x\right)f, b\>.
    \end{equation}
    By the proof of \cite[Proposition 4.2]{DJ08b}, we know 
    \begin{equation}
        \theta(O'_{g_1, n, p}(V))\subseteq O'_{g_{1}^{-1}, p, n}(V).
    \end{equation}
    Thus $\theta(a)\in O'_{g_{1}^{-1}, p, n}(V)$.
    The proof of \cite[Lemma 5.2]{DJ08b} tells us that 
    \begin{equation}
        \Res_{x}x^{-1+n-p}(Y_{g}^{R}\circ g_1)^{[-1]}\left(x^{L_{(0)}}\theta(a), x\right)f=0.
    \end{equation}
    Hence $a\uast_{g_1, m, p}^{n}b\in O'_{g_1, n, m}(V)$. 
\end{proof}
\begin{lemma}\label{lem:a-*-b-*-c-associativity}
    For all $a, b, c\in V$ and $m, p_1, p_2, p_3\in \frac{1}{T}\N$, we have
    \begin{equation}
        (a\uast_{g_1, p_1, p_2}^{p_3}b)\uast_{g_1, m, p_1}^{p_3}c-a\uast_{g_1, m, p_2}^{p_3}(b\uast_{g_1, m, p_1}^{p_2}c)\in O'_{g_1, p_3, p_1}(V).
    \end{equation}
\end{lemma}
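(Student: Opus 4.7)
The plan is to prove the statement by pairing with the dual space. By \cref{prop:diamond-A-Omega-D}, the space $\Omega_{p_1, p_3}^{\diamond}(\mathfrak{D}_{g_1, g_1^{-1}}^{(-1)}(V))$ is the full dual of $A^{\diamond}_{g_1, g_1, p_3, p_1}(V)$, so it suffices to show that every $f$ in this dual pairs identically with both terms of the asserted difference.

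We will apply the pairing formula from \cref{prop:pairing-R} iteratively. For the second term, we first rewrite the outer $\uast$-action, then the inner $\uast$-action; this converts
\[
\langle f, a\uast_{g_1, m, p_2}^{p_3}(b\uast_{g_1, m, p_1}^{p_2}c)\rangle
\]
into a double residue of the form
\[
\Res_y \Res_z y^{-1+p_3-p_2}z^{-1+p_2-p_1}\bigl\langle(Y_{g}^{R}\circ g_1)^{[-1]}(z^{L_{(0)}}\theta(b), z)(Y_{g}^{R}\circ g_1)^{[-1]}(y^{L_{(0)}}\theta(a), y)f, c\bigr\rangle.
\]
For the first term, we apply \cref{prop:pairing-R} once, treating $a\uast_{g_1, p_1, p_2}^{p_3}b$ as a single element of $V$, to obtain
\[
\Res_w w^{-1+p_3-p_1}\bigl\langle(Y_{g}^{R}\circ g_1)^{[-1]}(w^{L_{(0)}}\theta(a\uast_{g_1, p_1, p_2}^{p_3}b), w)f, c\bigr\rangle.
\]
Equality of the two pairings will then reduce to a residue identity relating $(Y_{g}^{R}\circ g_1)^{[-1]}$ applied to the iterated product $\theta(a\uast_{g_1, p_1, p_2}^{p_3}b)$ with the composition $(Y_{g}^{R}\circ g_1)^{[-1]}(\theta(b), z)\,(Y_{g}^{R}\circ g_1)^{[-1]}(\theta(a), y)$. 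This is to follow from the vertex operator associativity for the weak $g_2$-twisted module structure on $\mathfrak{D}_{g_1, g_1^{-1}}^{(-1)}(V)$ given by \cref{thm:twisted-regular-representation}, once the anti-homomorphism-type behavior of $\theta$ on the iterated product $\uast$ is unpacked via the $L_{(\pm 1)}$-conjugation formulas.

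The principal obstacle will be reconciling the outer right-index $m$ appearing in both terms with the right-index $p_1$ attached to $f\in\Omega_{p_1, p_3}^{\diamond}$: the direct application of \cref{prop:pairing-R} requires the right-index of the $\uast$ to match the corresponding parameter in the characterization of the dual from \cref{prop:A-Omega-D}. This gap is to be bridged using the $\diamond$-condition $(L^{r}_{(0)} - L^{l}_{(0)})f = (p_3 - p_1)f$ from \cref{prop:diamond-A-Omega-D} together with the commutativity of $(Y_{g}^{L})^{[1]}$ and $(Y_{g}^{R}\circ g_1)^{[-1]}$ established in \cref{thm:twisted-regular-representation}(3); jointly, they should let us transfer the right-index discrepancy into $(L_{(-1)}+L_{(0)}+p_1-p_3)(V)$-translations, which lie in $O'_{g_1, p_3, p_1}(V)$ and therefore pair trivially with $f$.
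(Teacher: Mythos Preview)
Your overall strategy—pair with the dual space and use \cref{prop:pairing-R} to convert both sides into residue expressions in $(Y_g^R\circ g_1)^{[-1]}$—is exactly what the paper does. But the execution differs at a crucial point, and your ``principal obstacle'' is in fact self-inflicted.

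You pair with $f\in\Omega_{p_1,p_3}^{\diamond}$, matching the subscript $p_1$ in the asserted target $O'_{g_1,p_3,p_1}(V)$. This creates the index mismatch you flag: \cref{prop:pairing-R} needs the first lower index of $\uast$ to agree with the $\Omega$-level of $f$ on the $Y^L$ side, and here that index is $m$, not $p_1$. Your proposed bridge via the $\diamond$-condition $(L^r_{(0)}-L^l_{(0)})f=(p_3-p_1)f$ does not close this gap: that eigenvalue condition controls the $Y^R$-grading of $f$, whereas what \cref{prop:pairing-R} needs is the $\Omega_m$-level on the $Y^L$ side (it enters the proof through the factor $(1+x)^{\wt a+\lambda(m,j_1)}$ used to pass from $Y_g^R\circ g_1$ to $Y_W^{*}$). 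Commutativity of the two deformed actions does not convert an $\Omega_{p_1}$-condition into an $\Omega_m$-condition.

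The paper simply pairs with $f\in\Omega_{m,p_3}(\mathfrak{D}_{g_1,g_1^{-1}}^{(-1)}(V))$, so \cref{prop:pairing-R} applies directly to each $\uast_{g_1,m,\bullet}^{\bullet}$ with no mismatch; after one application the resulting vector lands in $\Omega_{m,p_2}$ by \cref{prop:Omega-is-submodule}, so the second application is again legitimate. (This choice, together with the use of the lemma in absorbing $O''_{g_1,n,m}(V)$ via \labelcref{eq:a-*-O-in-O}, indicates that the intended target space is really $O'_{g_1,p_3,m}(V)$; the ``$p_1$'' in the displayed conclusion appears to be a typo, and it is what misled you into the wrong dual.) For the remaining identity—the $\theta$-compatibility of $\uast$ and the double-residue associativity—the paper does not redo the $L_{(\pm1)}$-conjugation computations you sketch, but quotes \cite{DJ08b}: Proposition~4.2 there gives $\theta(a\uast_{g_1,p_1,p_2}^{p_3}b)\equiv\theta(b)\uast_{g_1^{-1},p_3,p_2}^{p_1}\theta(a)\pmod{O'_{g_1^{-1},p_1,p_3}(V)}$, Lemma~5.2 there kills the $O'$-discrepancy under the residue operator, and Lemma~5.1 there supplies the equality of the two double residues.
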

\begin{proof}
    By \cref{prop:pairing-R}, for any $f\in \Omega_{m, p_3}(\mathfrak{D}_{g_1, g_1^{-1}}^{(-1)}(V))$ and homogeneous $a, b, c\in V$, we have
    \begin{equation}
        \begin{aligned}
        \MoveEqLeft
        \<f, (a\uast_{g_1, p_1, p_2}^{p_3}b){\uast}_{g_1, m, p_1}^{p_3}c\>\\
        &=\Res_{x}x^{-1+p_3-p_1}\<(Y_{g}^{R}\circ g_1)^{[-1]}\left(x^{L_{(0)}}(\theta(a\uast_{g_1, p_1, p_2}^{p_3}b)), x\right)f, c\>,
        \end{aligned}
    \end{equation}
    and 
    \begin{equation}
        \begin{aligned}
            &\<f, a\uast_{g_1, m, p_2}^{p_3}(b\uast_{g_1, m, p_1}^{p_2}c)\>\\
            &=\Res_{x}\Res_{y}x^{-1+p_3-p_2}y^{-1+p_2-p_1}\\
            &\hspace{1.5cm} \<(Y_{g}^{R}\circ g_1)^{[-1]}(y^{L_{(0)}}\theta(b), y)(Y_{g}^{R}\circ g_1)^{[-1]}(x^{L_{(0)}}\theta(a), x)f, c\>.
        \end{aligned}
    \end{equation}
    By the proof of \cite[Proposition 4.2]{DJ08b}, we have
    \begin{equation}
        \theta(a\uast_{g_1, p_1, p_2}^{p_3}b)-\theta(b)\uast_{g_1^{-1}, p_3, p_2}^{p_1}\theta(a)\in O'_{g_1^{-1}, p_1, p_3}(V).
    \end{equation}
    Therefore,
    \begin{equation}
        \begin{aligned}
        &\<f, (a\uast_{g_1, p_1, p_2}^{p_3}b){\uast}_{g_1, m, p_1}^{p_3}c\>\\
        &=\Res_{x}x^{-1+p_3-p_1}\<(Y_{g}^{R}\circ g_1)^{[-1]}\left(x^{L_{(0)}}(\theta(b)\uast_{g_1^{-1}, p_3, p_2}^{p_1}\theta(a)), x\right)f, c\>\\
        &=\Res_{x}\Res_{y}x^{-1+p_3-p_2}y^{-1+p_2-p_1}\\
        &\hspace{1.5cm}\<(Y_{g}^{R}\circ g_1)^{[-1]}(y^{L_{(0)}}\theta(b), y)(Y_{g}^{R}\circ g_1)^{[-1]}(x^{L_{(0)}}\theta(a), x)f, c\>\\
        &=\<f, a\uast_{g_1, m, p_2}^{p_3}(b\uast_{g_1, m, p_1}^{p_2}c)\>,
        \end{aligned}
    \end{equation}
    where the second equality follows from \cite[Lemma 5.2]{DJ08b}, and the third equality follows from the proof of \cite[Lemma 5.1]{DJ08b}. 
    Hence, we have proved
    \begin{equation}
        (a\uast_{g_1, p_1, p_2}^{p_3}b)\uast_{g_1, m, p_1}^{p_3}c-a\uast_{g_1, m, p_2}^{p_3}(b\uast_{g_1, m, p_1}^{p_2}c)\in O'_{g_1, p_3, p_1}(V).
    \end{equation}
\end{proof}
\begin{proposition}
    For any $n, m\in \frac{1}{T}\N$ and a finitely-ordered $g_1\in \Aut(V)$ such that $g_1^T=1$, we have 
    \begin{equation}
        O''_{g_1, n, m}(V)+O'''_{g_1, n, m}(V)\subseteq O'_{g_1, n, m}(V).  
    \end{equation}
\end{proposition}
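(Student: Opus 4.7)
The plan is to prove the two containments $O''_{g_1, n, m}(V) \subseteq O'_{g_1, n, m}(V)$ and $O'''_{g_1, n, m}(V) \subseteq O'_{g_1, n, m}(V)$ separately, in each case by chaining together two of the three key tools just established in this section: \cref{lem:well-define-a(p)} (the left action $a \uast^{n}_{g_1, m, p}$ maps $O'_{g_1, p, m}(V)$ into $O'_{g_1, n, m}(V)$, applied with $W = V$ and $g_2^{-1} = g_1$, so that $O'_{g_1, g_1, \bullet, \bullet}(V) = O'_{g_1, \bullet, \bullet}(V)$), \cref{lem:O-*-a-in-O} (the right analogue $O'_{g_1, n, p}(V) \uast^{n}_{g_1, m, p} b \subseteq O'_{g_1, n, m}(V)$), and \cref{lem:a-*-b-*-c-associativity} (the left-action associativity modulo $O'$). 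The common idea is that the inner factor of each generator lives in an intermediate $O'_{g_1, \bullet, \bullet}(V)$, after which one more application of either the left or the right inclusion absorbs it into $O'_{g_1, n, m}(V)$.

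For the $O'''$ containment, a generator has the form $(d \uast^{n}_{g_1, p_1, p_2} x) \uast^{n}_{g_1, m, p_1} e$ with $d, e \in V$ and $x \in O'_{g_1, p_2, p_1}(V)$. First, \cref{lem:well-define-a(p)} with indices $(n, p_1, p_2)$ in place of $(n, m, p)$ gives $d \uast^{n}_{g_1, p_1, p_2} x \in O'_{g_1, n, p_1}(V)$; then \cref{lem:O-*-a-in-O} with $(n, m, p_1)$ in place of $(n, m, p)$ absorbs the result into $O'_{g_1, n, m}(V)$.

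For the $O''$ containment, a generator is $d \uast^{n}_{g_1, m, p_3} X$ where $X$ is the associativity difference appearing inside. \cref{lem:a-*-b-*-c-associativity}, together with its proof by pairing against $\Omega_{m, p_3}(\mathfrak{D}^{(-1)}_{g_1, g_1^{-1}}(V))$, places $X$ in $O^{\dag}_{g_1, g_1, p_3, m}(V) \subseteq O'_{g_1, p_3, m}(V)$, the second index being the common parameter $m$ of the outer left actions inside $X$. Then \cref{lem:well-define-a(p)} with indices $(n, m, p_3)$ gives $d \uast^{n}_{g_1, m, p_3} X \in O'_{g_1, n, m}(V)$. The main subtlety, and hence the likely obstacle, is keeping the indices of the intermediate $O'_{g_1, \bullet, \bullet}(V)$ consistent with the subscripts of the outer $\uast$ at each chaining step: \cref{lem:well-define-a(p)} and \cref{lem:O-*-a-in-O} are only applicable when the second index of the $O'$ produced by the previous step matches the right-parameter of the outer $\uast$, which is precisely what dictates the specific pairings of indices $(p_1, p_2)$ and $(p_3, m)$ in the two arguments above.
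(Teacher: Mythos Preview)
Your argument is correct and follows exactly the paper's approach: for $O'''$ you chain \cref{lem:well-define-a(p)} with \cref{lem:O-*-a-in-O}, and for $O''$ you chain \cref{lem:a-*-b-*-c-associativity} with \cref{lem:well-define-a(p)}. You were also right to look inside the proof of \cref{lem:a-*-b-*-c-associativity}: the pairing there is against $\Omega_{m, p_3}(\mathfrak{D}_{g_1, g_1^{-1}}^{(-1)}(V)) = (A^{\dag}_{g_1, g_1, p_3, m}(V))^{*}$, so the associativity difference $X$ actually lands in $O^{\dag}_{g_1, g_1, p_3, m}(V) \subseteq O'_{g_1, p_3, m}(V)$, which is precisely the index needed to feed into $d \uast_{g_1, m, p_3}^{n}(-)$ via \cref{lem:well-define-a(p)}.
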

\begin{proof}
    $O''_{g_1, n, m}(V)\subseteq O'_{g_1, n, m}(V)$ is a consequence of \labelcref{eq:a-*-O-in-O} and \cref{lem:a-*-b-*-c-associativity}, and $O'''_{g_1, n, m}(V)\subseteq O'_{g_1, n, m}(V)$ is a consequence of \labelcref{eq:a-*-O-in-O} and \cref{lem:O-*-a-in-O}.  
\end{proof}

\textbf{Acknowledgement}: The author would like to express his gratitude to Yukun Xiao for his careful reading of the manuscript and pointing out many typos and mistakes. 
\section*{Declarations}

No funding was received for conducting this study.
The authors have no competing interests to declare that are relevant to the content of this article.
No data was used for the research described in the article.




 \bibliographystyle{plain} 
 \bibliography{bib}



\end{document}